\newsavebox{\TableThree}
 \newcolumntype{R}[1]{>{\RaggedRight}p{#1}}
\DeclareDocumentCommand{\NF}{o m m}%
{%
\IfValueTF{#1}
 {\ensuremath{#1\textsc{-nf}(#2,#3)}\xspace}
 {\ensuremath{\textsc{nf}(#2,#3)}\xspace}%
}
\newcounter{algcounter}
\renewcommand{\thealgcounter}{\arabic{algcounter}}
\noindent\fbox{\TheSbox}
\newcommand{\que}{{\ensuremath{\mathbb{Q}}}}
\newcommand{\zed}{{\ensuremath{\mathbb{Z}}}}
\newcommand{\eff}{{\ensuremath{\mathbb{F}}}}
\newcommand{\Aut}{{\ensuremath{\mathsf{Aut}}}}
\newcommand{\af}[1]{\textbf{#1}} 
\newlength{\boxwidth}
\theoremstyle{plain}
\newtheorem{theorem}{Theorem}[section]
\newtheorem{lemma}[theorem]{Lemma}
\newtheorem{corollary}[theorem]{Corollary}
\newtheorem{remark}[theorem]{Remark}
\newtheorem{example}[theorem]{Example}
\title{$\lambda$-fold near-factorizations of groups
}
\author[1]{Donald L.\ Kreher}
\affil[1]{Department of Mathematical Sciences,
Michigan Technological University,
Houghton, MI 49931, U.S.A.}
\author[2]{Shuxing Li}
\affil[2]{Department of Mathematical Sciences, University of Delaware, Newark, DE 19716, U.S.A.}
\author[3]{Douglas R.\ Stinson\thanks{D.R.\ Stinson's research is supported by  NSERC discovery grant RGPIN-03882.}}
\affil[3]{David R.\ Cheriton School of Computer Science, University of Waterloo, Waterloo ON, N2L 3G1, Canada}
\begin{document}
\maketitle

\begin{abstract}
We initiate the study of $\lambda$-fold near-factorizations of groups with $\lambda > 1$. While $\lambda$-fold near-factorizations of groups with $\lambda =  1$ have been studied in numerous papers, this is the first detailed treatment for 
$\lambda > 1$. We establish fundamental properties of $\lambda$-fold near-factorizations and introduce the notion of equivalence. We prove various necessary conditions of $\lambda$-fold near-factorizations, including upper bounds on $\lambda$. We present three constructions of infinite families of $\lambda$-fold near-factorizations, highlighting the characterization of two subfamilies of $\lambda$-fold near-factorizations. We discuss a computational approach to $\lambda$-fold near-factorizations and tabulate computational results for abelian groups of small order.
\end{abstract}

\section{$\lambda$-mates in near-factorizations}
\label{sec1}

We begin with some definitions involving the group ring $\zed[G]$ (for a gentle introduction to group rings and character theory, see Jedwab and Li \cite{JL}). For the purposes of these definitions, we assume that $G$ is a multiplicative group.

Suppose $(G,\cdot)$ is a group with identity $e$. 
Elements of $\zed[G]$ are formal sums
\[ \sum_{g \in G} a_g \, g,\] where the $a_g$'s are in $\zed$.
A subset $S \subseteq G$ corresponds in an obvious way to 
the group ring element
$ \sum_{g \in S}  g$. By abuse of notation, we also use $S \in \zed[G]$ to represent the group ring element corresponding to the subset $S$ of $G$.
Note that all coefficients of this group ring element are $0$'s and $1$'s. In particular, $G$
is identified with $ \sum_{g \in G}  g \in \zed [G]$ and $G \setminus \{e\}$ is identified with $G - e\in \zed [G]$. 

Operations in $\zed[G]$ are defined as follows. Suppose $A,B \in \zed [G]$, say 
\[A = \sum_{g \in G} a_g \, g \quad \text{and} \quad 
 B = \sum_{g \in G} b_g \, g.\] The sum $A+B$ is computed as $C = \sum_{g \in G} c_g \, g$, where
\[ c_g = a_g + b_g\] for all $g\in G$. The difference $A-B$ is computed in the obvious way.
If $A$ and $B$ have non-negative integer coefficients, then they can be viewed as multisets. In this case, $A + B$ denotes the multiset union of $A$ and $B$.

The product $AB$ is computed as 
$D =\sum_{g \in G} d_g \, g$, where
\[ d_g = \sum_{\{ (g',g'') \, : \, g' g'' = g\}} a_{g'} b_{g''} 
=\sum_{h \in G} a_{gh^{-1}}b_h
\] for all $g\in G$.
For an integer $c$, we define 
\[cA = \sum_{g \in G} c a_g \, g.\]
Finally, define 
\[ A^{(-1)} = \sum_{g \in G} a_g \, g^{-1}.\]

We can now define near-factorizations.
Let $(G,\cdot)$ be a finite multiplicative group with identity $e$. 
Suppose that $S, T \subseteq G$ (that is, $S$ and $T$ are required to be subsets of $G$). We say that $(S,T)$ is a \emph{$\lambda$-fold near-factorization of $G$}
(denoted \NF{G}{\lambda})
if the following group ring equation holds:
\[
ST= \lambda(G - e).
\]
This means that every non-identity group element $g$ can be expressed as a product $g= st$ 
with $s \in S$ and $t \in T$ in exactly $\lambda$ ways. Further, $st \neq e$ for any $s \in S$ and $t \in T$.

An \NF{G}{\lambda}, say  $(S,T)$, 
is an \emph{$(s,t)$-$\lambda$-fold near-factorization of $G$} (denoted \NF[(s,t)]{G}{\lambda})
if $|S|=s$ and $|T| =t$.
Note that an \NF[(s,t)]{G}{\lambda} exists only if 
\begin{equation}
\label{prod.eq}
st = \lambda(|G| -1).
\end{equation}
Observe that necessarily
$\lambda \leq s $ and $\lambda \leq t$. (Stronger upper bounds on $\lambda$ will be proven in Section \ref{lambda.sec}.)

There is always a \emph{trivial} \NF[(1,|G|-1)]{G}{1},
given by $S = \{e\}$, $T = 
G \setminus \{e\}$. More generally, if we take $S= \{ g \}$, $T=G \setminus \{ g^{-1} \}$ for any $g \in G$, then
we get a {trivial} \NF[(1,|G|-1)]{G}{1}.
A near-factorization with $|S| > 1$ and $|T| > 1$ is \emph{nontrivial}.
If $(S,T)$ is a $\lambda$-fold near-factorization, then we say that $T$ is a \emph{$\lambda$-mate} of $S$.
\textup{(A $\lambda$-mate with $\lambda=1$ is simply called a
\emph{mate}.)}

Virtually all of our constructions and 
examples of near-factorizations will be in additive groups. In an additive group, the product $D = AB$ of 
$A,B \in \zed[G]$ would be defined in terms of the additive (group) operation. Hence, we define 
$D =\sum_{g \in G} d_g \, g$, where
\[ d_g = \sum_{\{ (g',g'') \, : \, g' +  g'' = g\}} a_{g'} b_{g''} = \sum_{ h \in G } a_{g-h} b_{h}\] for all $g\in G$.
Note that we still denote this as a product in the group ring $\zed[G]$, even though the group operation is written additively.

\begin{example}
{\rm We present a \NF[(4,7)]{\zed_{15}}{2} consisting of $S = \{1,  4,  11,  14 \}$ and   $T= \{ 0,  2,  6,  7,  8,  9,  13\}$.
The group ring product $ST$ is computed as the multiset consisting of all sums $s \in S$, $t \in T$. We tabulate these sums as follows:
\[
\begin{array}{r||r|r|r|r|r|r|r}
 & 0 & 2 & 6 & 7 & 8 & 9 & 13 \\ \hline\hline
1 & 1 & 3 & 7 & 8 & 9 & 10 & 14 \\ \hline
4 & 4 & 6 & 10 & 11 & 12 & 13 & 2\\ \hline
11 & 11 & 13 & 2 & 3 & 4 & 5 & 9 \\ \hline
14 & 14 & 1 & 5 & 6 & 7 & 8 & 12
\end{array}
\]
We see that there are two occurrences of every nonzero element of $\zed_{15}$ in this table. That is, the group ring equation
$ST = 2(\zed_{15} - 0)$ holds and therefore we have a \NF[(4,7)]{\zed_{15}}{2}.
}$\hfill\blacksquare$
\end{example}

There has been considerable study of \NF[(s,t)]{G}{1}; see, for example, \cite{BHS,DB,CGHK,KPS,KPS2,KMS,Nathanson,Pech03,Pech,SS}. The 1956 paper by de Bruijn \cite{DB} was apparently the first one to study near-factorizations.
We believe that \NF[(s,t)]{G}{\lambda} with $\lambda > 1$ were first considered in \cite{KMS}, where some examples were constructed by computer (however, the related problem of $\lambda$-fold factorizations of groups has received some study; see \cite{YMSJ}).
Our current paper is the first to study \NF[(s,t)]{G}{\lambda} with $\lambda > 1$ in depth. One of the interesting aspects of our study is how different \NF[(s,t)]{G}{\lambda} with $\lambda > 1$ are, as compared to \NF[(s,t)]{G}{1}.

\subsection{Previous work and our contributions}

Before summarizing the contributions of our paper, we recall some of the many interesting published results concerning \NF[(s,t)]{G}{1}.
\begin{enumerate}
\item For any finite group $G$ and any $S \subseteq G$ with $|S| = s$, there is at most one set $T \subseteq G$ such that $(S,T)$ is an 
\NF[(s,t)]{G}{1} (where $st = |G|-1$). Moreover, there is an explicit formula to compute $T$ (if it exists) as a function of $S$  (see \cite{KMS}).
\item There has been considerable study of \NF[(s,t)]{G}{1}  in cyclic groups. Some relevant papers include
\cite{DB,CGHK,KPS,Nathanson}. 
\item There are also constructions for \NF[(s,t)]{G}{1} in dihedral groups; see \cite{BHS,CGHK,KPS,KPS2,Pech03,Pech}. 
\item There is no nontrivial \NF[(s,t)]{G}{1} in a noncyclic abelian group of order at most $200$; see \cite{CGHK,KMS,Pech}.
\item There are a couple of small sporadic examples of \NF[(s,t)]{G}{1} in nondihedral nonabelian groups; see \cite{KPS2,Pech}.
\item Every \NF[(s,t)]{G}{1} is equivalent to a symmetric \NF[(s,t)]{G}{1}  (see \cite{CGHK}; for the definitions of  symmetric near-factorizations and equivalence of near-factorizations, see Section \ref{symm.sec}).
\end{enumerate}

In this paper, we study \NF[(s,t)]{G}{\lambda} with $\lambda > 1$. The following are our main results.
 \begin{enumerate}
 \item For any finite group $G$ and any $S \subseteq G$ with $|S| = s$, there is at most one set $T \subseteq G$ such that $(S,T)$ is 
\NF[(s,t)]{G}{\lambda} (where $st = \lambda(|G|-1)$). Moreover, there is an explicit formula to compute $T$ (if it exists) as a function of $S$ (see Theorems \ref{T2.thm} and \ref{T3-l.thm}).
\item Some properties of symmetric \NF[(s,t)]{G}{\lambda} and equivalence of \NF[(s,t)]{G}{\lambda} are discussed in Section \ref{symm.sec}.
\item We prove some upper bounds on $\lambda$, assuming that an \NF[(s,t)]{G}{\lambda}  exists (see Section \ref{lambda.sec}).
\item We show that \NF[(s,t)]{G}{\lambda} with $s+t= |G|$ are equivalent to certain difference sets in $G$ (see Section \ref{DSandPDS.sec}). For most (but not all) parameter sets, the resulting near-factorizations are not equivalent to symmetric near-factorizations.
\item We show that \NF[(s,t)]{G}{\lambda} with $s+t+1= |G|$ are equivalent to certain partial difference sets in $G$  (see Section \ref{DSandPDS.sec}). The resulting near-factorizations are always symmetric.
\item We describe a product construction for certain near-factorizations in  Section \ref{prod.sec}.
\item We use the group ring to derive some restrictions on $s$ and $t$ for \NF[(s,t)]{G}{\lambda}, extending results from \cite{CGHK} (see Section \ref{subsetsize.sec}).
\item We use the computer to find some sporadic \NF[(s,t)]{G}{\lambda} in abelian groups that are not of the three types enumerated in items 3--5  above (see Section \ref{Comp.sec}). Examples of $\NF[(s,t)]{G}{\lambda}$ in abelian groups of small order are summarized in Tables~\ref{Mixed.tab} and~\ref{Pure.tab} (see Section \ref{Comp.sec}). 
\item We provide a list of instances of near-factorizations in abelian groups of small order, whose nonexistence only follows from exhaustive computer search (see Table~\ref{Computer.tab} in the Appendix). This table indicates that current theoretical nonexistence results for near-factorizations leave many parameter sets unresolved.
\end{enumerate}

\subsection{Some basic properties of near-factorizations}
\label{properties.sec}

We discuss a few basic properties of near-factorizations now. 
If $G$ is  any multiplicative group (not necessarily abelian), it is clear that 
$(ST)^{(-1)} = T^{(-1)}S^{(-1)}$ for any two subsets $S,T \subseteq G$.
Because $ST = \lambda (G - e)$ if and only if $(ST)^{(-1)} = \lambda (G -e)$, we see that
$ST = \lambda (G - e)$ if and only if $T^{(-1)}S^{(-1)} = \lambda (G - e)$. 
Hence, $(S,T)$ 
is an \NF[(s,t)]{G}{\lambda}
if and only if 
$(T^{(-1)},S^{(-1)})$ 
is a \NF[(t,s)]{G}{\lambda}.
Therefore we have the following result.

\begin{lemma} 
Suppose $G$ is any finite group. Then there is an \NF[(s,t)]{G}{\lambda}
if and only if there is a \NF[(t,s)]{G}{\lambda}. 
\end{lemma}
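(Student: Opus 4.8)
The plan is to observe that the inversion (antipode) map $A \mapsto A^{(-1)}$ on $\zed[G]$ supplies an explicit bijection between the two families of near-factorizations, so that the ``only if'' direction immediately yields the ``if'' direction as well. This is exactly the computation sketched in the paragraph preceding the statement; I would simply organize it into three ingredients and then invoke it twice.

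First I would record the facts that make this work. The map $A \mapsto A^{(-1)}$ is an anti-homomorphism of $\zed[G]$: for all $A, B \in \zed[G]$ one has $(AB)^{(-1)} = B^{(-1)}A^{(-1)}$, which follows by reindexing the defining formula for the product (and which the excerpt already notes for $0$--$1$ elements $S,T$). It fixes the relevant target: since $g \mapsto g^{-1}$ permutes $G$ and fixes $e$, we have $G^{(-1)} = G$ and $e^{(-1)} = e$, hence $(\lambda(G-e))^{(-1)} = \lambda(G-e)$. Finally, if $S \subseteq G$ then $S^{(-1)}$ again has all coefficients in $\{0,1\}$, so it is (the group ring element of) a subset of $G$, and $|S^{(-1)}| = |S|$.

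Next I would take an \NF[(s,t)]{G}{\lambda}, say $(S,T)$, so that $ST = \lambda(G-e)$ with $|S| = s$ and $|T| = t$. Applying $(-1)$ to both sides and using the three facts above gives $T^{(-1)}S^{(-1)} = \lambda(G-e)$, where $T^{(-1)}, S^{(-1)} \subseteq G$ with $|T^{(-1)}| = t$ and $|S^{(-1)}| = s$. Thus $(T^{(-1)}, S^{(-1)})$ is an \NF[(t,s)]{G}{\lambda}, which proves the forward implication. The converse follows by the same computation with the roles of $(S,T)$ and $(s,t)$ interchanged; equivalently, since $(-1)$ is an involution on $\zed[G]$, the assignment $(S,T) \mapsto (T^{(-1)}, S^{(-1)})$ is its own inverse, hence a bijection between the set of \NF[(s,t)]{G}{\lambda} and the set of \NF[(t,s)]{G}{\lambda}.

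There is no substantive obstacle here; the only points requiring a line of care are the verification that $(-1)$ is an anti-homomorphism on arbitrary group ring elements (not merely on $0$--$1$ elements), which is a routine reindexing, and the bookkeeping that subset-hood and cardinality are preserved under $S \mapsto S^{(-1)}$. Both are immediate from the definitions given in Section~\ref{sec1}.
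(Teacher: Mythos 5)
Your proposal is correct and follows essentially the same route as the paper, which proves the lemma in the paragraph preceding it by applying the inversion map to the equation $ST=\lambda(G-e)$ to obtain $T^{(-1)}S^{(-1)}=\lambda(G-e)$. The extra care you take in checking that inversion is an anti-homomorphism fixing $\lambda(G-e)$ and preserving subset-hood and cardinalities is exactly the bookkeeping the paper leaves implicit.
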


Suppose $|G| = n$ and the elements of $G$ are enumerated as
$g_1, \dots , g_n$. For convenience, we will assume that $g_1 = e$ (the identity).
For any subset $A \subseteq G$, define an $n \times n$ matrix $M(A)$ as follows
\[
(M(A))_{i,j} = \begin{cases}
1 & \text{if $g_i^{-1}g_j \in A$}\\
0 & \text{otherwise}.
\end{cases}
\]

Let $I$ denote the identity matrix and let $J$ be the all-ones matrix. The following important theorem holds for all finite groups (abelian and nonabelian).
This result was stated without proof in \cite{CGHK} in the case $\lambda = 1$.

\begin{theorem}
\label{equiv.thm}
Suppose $S,T \subseteq G$. Then $(S,T)$ 
is an \NF{G}{\lambda}
if and only if $M(S)M(T) = \lambda (J-I)$ \textup{(}this is an equation over the integers\textup{)}. 
\end{theorem}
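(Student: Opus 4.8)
The plan is to translate both sides of the group ring equation $ST = \lambda(G-e)$ into matrix identities via the map $A \mapsto M(A)$, and then show this map is a ring homomorphism from $\zed[G]$ into the $n \times n$ integer matrices. The key observation is that $M(A)$ is exactly the matrix of the linear operator ``left multiplication (or rather, the convolution) by $A^{(-1)}$'' acting on $\zed[G]$ in the basis $g_1,\dots,g_n$; concretely, $(M(A))_{i,j} = [g_i^{-1}g_j \in A] = a_{g_i^{-1}g_j}$ where $a$ is the coefficient function of $A$. So the first step is to record, for a general $A = \sum_g a_g\, g \in \zed[G]$, the matrix $M(A)$ with $(M(A))_{i,j} = a_{g_i^{-1} g_j}$, agreeing with the given definition when $A$ is a $0$--$1$ vector (a subset).

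The second and central step is to prove that $M(AB) = M(A)M(B)$ for all $A, B \in \zed[G]$ (it is clearly $\zed$-linear, so it suffices to check multiplicativity, and in fact it suffices to check it on basis elements, i.e.\ $M(g_k)M(g_\ell) = M(g_k g_\ell)$, though doing it directly for general $A,B$ is no harder). I would compute
\[
(M(A)M(B))_{i,j} = \sum_{k=1}^{n} (M(A))_{i,k} (M(B))_{k,j} = \sum_{k=1}^{n} a_{g_i^{-1} g_k}\, b_{g_k^{-1} g_j},
\]
and then substitute $h = g_i^{-1} g_k$ (so $g_k$ ranges over all of $G$ as $h$ does, and $g_k^{-1} g_j = h^{-1} g_i^{-1} g_j$), giving
\[
(M(A)M(B))_{i,j} = \sum_{h \in G} a_h\, b_{h^{-1}(g_i^{-1} g_j)} = (AB)_{g_i^{-1} g_j} = (M(AB))_{i,j},
\]
using the definition of the group ring product $d_g = \sum_{h} a_{gh^{-1}} b_h$ (equivalently $\sum_h a_h b_{h^{-1}g}$ after reindexing). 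This is the step requiring the most care, mainly to get the index bookkeeping and the role of $g^{-1}$ exactly right; it is where the asymmetry in the definition of $M(A)$ pays off.

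The third step is to identify the images of the distinguished group ring elements under $M$: we have $M(G) = J$ (since $g_i^{-1} g_j \in G$ always) and $M(e) = M(\{e\}) = I$ (since $g_i^{-1} g_j = e$ iff $i = j$); hence $M(\lambda(G - e)) = \lambda(J - I)$ by linearity. Finally I would assemble the equivalence: if $(S,T)$ is an \NF{G}{\lambda}, then $ST = \lambda(G-e)$ in $\zed[G]$, so applying $M$ and using multiplicativity gives $M(S)M(T) = \lambda(J-I)$. Conversely, if $M(S)M(T) = \lambda(J-I) = M(\lambda(G-e))$, then $M(ST) = M(\lambda(G-e))$, and since $M$ is injective (the coefficient $a_g$ can be read off from, say, the $(1,j)$ entry with $g_j = g$, because $g_1 = e$), we conclude $ST = \lambda(G-e)$, i.e.\ $(S,T)$ is an \NF{G}{\lambda}. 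The only genuine obstacle is the multiplicativity computation in step two; everything else is a short unwinding of definitions.
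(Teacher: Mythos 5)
Your proof is correct, and it organizes the argument differently from the paper. The paper works entry-by-entry: it identifies the $(i,j)$ entry of $M(S)M(T)$ as $|g_i S \cap g_j T^{(-1)}|$, which is the number of pairs $(a,b)\in S\times T$ with $ab=g_i^{-1}g_j$, and then reads off the values $0$ (on the diagonal) and $\lambda$ (off it) from the near-factorization hypothesis; as written, that proof only argues the forward implication, with the converse left implicit in the same count. You instead extend $A\mapsto M(A)$ to all of $\zed[G]$ and prove it is an injective ring homomorphism (essentially the regular representation), so that both implications follow formally from $M(AB)=M(A)M(B)$, $M(G)=J$, $M(e)=I$, and injectivity. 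The core computation $\sum_k a_{g_i^{-1}g_k}\,b_{g_k^{-1}g_j}=(AB)_{g_i^{-1}g_j}$ is the same bookkeeping as the paper's set-theoretic count, just phrased with coefficients, and your reindexing $h=g_i^{-1}g_k$ and your use of the convention $d_g=\sum_h a_{gh^{-1}}b_h=\sum_h a_h b_{h^{-1}g}$ are both correct; injectivity via the first row (using $g_1=e$) is also right. What your packaging buys is a genuinely two-sided proof of the ``if and only if'' and a reusable structural fact (the same homomorphism viewpoint underlies the later matrix arguments, such as the uniqueness-of-mates results); what it costs is the extra setup of defining $M$ on all of $\zed[G]$ and checking linearity, multiplicativity and injectivity, which the paper's direct counting avoids. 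No gaps.
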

\begin{proof}
Denote $M(S) = X = (x_{i,j})$ and $M(T) = Y = (y_{i,j})$. Let $XY = Z = (z_{i,j})$.
We have 
\begin{align*} z_{i,j} &= \sum_{k=1}^n x_{i,k} y_{k,j}\\
&= | \{ k : x_{i,k} = y_{k,j} = 1\} | \\
&= | \{ k : g_i^{-1}g_k \in S \text{ and } g_k^{-1}g_j \in T\} | \\
&= | \{ k : g_k \in g_i S \text{ and } g_k^{-1} \in Tg_j^{-1}\} | \\
&= | \{ k : g_k \in g_i S \text{ and } g_k \in g_j T^{(-1)}\} | \\
&= | g_i S \cap  g_j T^{(-1)} |.
\end{align*}

Suppose $h \in g_i S \cap  g_j T^{(-1)}$. Then
$h = g_i a = g_j b^{-1}$ for some $a \in S$, $b \in T$.
So $ab =  g_i^{-1} g_j$. If $i = j$, then $g_i^{-1} g_j = e$  and no such $x$ exists, because
$ab \neq e$ for any $a \in S$, $b \in T$. If $i \neq j$, then $g_i^{-1} g_j \neq e$ and there are exactly $\lambda$ pairs $(a,b)$ 
such that $ab = g_i^{-1} g_j$, $a \in S$ and $b \in T$.
It follows that
\[ z_{i,j} = \begin{cases}
0 & \text{if $i = j$}\\
\lambda & \text{if $i \neq j$.}
\end{cases}\]
This means that $XY = \lambda (J - I)$, as desired.
\end{proof}

The following two theorems are straightforward generalizations of results proven in \cite{KMS} in the case $\lambda = 1$.

\begin{theorem}
\label{T2.thm}
Let $\lambda$ be a positive integer and let $\lambda(J - I) = XY$ be a factorization into two $n \times n$ integral matrices $X$ and $Y$. Suppose also that $XJ = rJ$ for some positive integer $r$. Then
\[
 Y = \frac{\lambda}{r} J - \lambda X^{-1}.
\]
\end{theorem}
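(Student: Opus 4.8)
The plan is to first establish that $X$ is invertible over $\que$, and then simply rearrange the factorization equation. The key observation is that $J-I$ is nonsingular: its eigenvalues are $n-1$ (with eigenvector the all-ones column vector $\mathbf{1}$) and $-1$ (with multiplicity $n-1$), so $\det(J-I) = (-1)^{n-1}(n-1) \neq 0$ whenever $n \geq 2$. Since $\lambda$ is a positive integer, $\lambda(J-I)$ is therefore nonsingular, and hence both $X$ and $Y$ must be nonsingular (because $\lambda(J-I) = XY$). In particular $X^{-1}$ exists. (One implicitly assumes $n \geq 2$; the case $n=1$ is vacuous since then $J-I = 0$.)

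Next I would extract information from the hypothesis $XJ = rJ$. Each column of $J$ equals $\mathbf{1}$, so $XJ = rJ$ is equivalent to $X\mathbf{1} = r\mathbf{1}$. Since $X$ is invertible and $\mathbf{1} \neq 0$, we must have $r \neq 0$ (otherwise $\mathbf{1}$ would lie in the kernel of $X$); this is what allows $r$ to appear in a denominator. Applying $X^{-1}$ to both sides of $X\mathbf{1} = r\mathbf{1}$ gives $X^{-1}\mathbf{1} = \frac{1}{r}\mathbf{1}$, and since every column of $J$ is $\mathbf{1}$, this yields $X^{-1}J = \frac{1}{r}J$.

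Finally, left-multiplying $XY = \lambda(J-I)$ by $X^{-1}$ gives
\[
Y = \lambda X^{-1}(J-I) = \lambda X^{-1}J - \lambda X^{-1} = \frac{\lambda}{r}J - \lambda X^{-1},
\]
as claimed. Note that the two terms on the right-hand side need not be integer matrices individually, but their difference $Y$ is.

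I do not expect any serious obstacle here: the argument is essentially elementary linear algebra once one notices that $J-I$ is invertible (which forces $X$ to be invertible). The only point requiring a moment's care is verifying that $r \neq 0$, so that the stated formula is even meaningful, and this follows immediately from the invertibility of $X$ together with $X\mathbf{1} = r\mathbf{1}$.
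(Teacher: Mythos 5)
Your proof is correct and follows essentially the same route as the paper: note $\det(J-I)\neq 0$ forces $X$ to be invertible, then left-multiply $XY=\lambda(J-I)$ by $X^{-1}$ and use $XJ=rJ$ to replace $X^{-1}J$ by $\frac{1}{r}J$. Your extra details (the eigenvalue computation for $J-I$ and the check that $r\neq 0$, which is in fact already a hypothesis) are harmless elaborations of the same argument.
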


\begin{proof}
Because $\det(J-I) \neq 0$ and $\lambda(J-I) = XY$, it follows that $X$ and $Y$ are invertible.
Then we have the following:
\begin{align*}
Y &= X^{-1}\lambda(J-I)\\
&=  \lambda(X^{-1}J - X^{-1}) \\
&= \frac{\lambda}{r}X^{-1}(rJ)-\lambda X^{-1}\\
&=\frac{\lambda}{r}X^{-1}(XJ)-\lambda X^{-1} \\
&=\frac{\lambda}{r}J-\lambda X^{-1}.
\end{align*}
\end{proof}


Hence, we immediately obtain the following result as a corollary of Theorem \ref{equiv.thm}. Again, this result holds for all finite groups.

\begin{theorem}
\label{T3.thm}
Suppose $(S,T)$ and $(S,T')$ are both $\lambda$-fold near-factorizations of a finite group $G$. Then $T = T'$.
(Informally, if $S$ has a $\lambda$-mate $T$, then 
the $\lambda$-mate $T$ is unique.)
\end{theorem}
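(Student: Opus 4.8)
The plan is to derive Theorem \ref{T3.thm} directly from Theorems \ref{equiv.thm} and \ref{T2.thm} by passing to the matrix representation. Given near-factorizations $(S,T)$ and $(S,T')$ of $G$, set $X = M(S)$, $Y = M(T)$, $Y' = M(T')$. By Theorem \ref{equiv.thm}, we have $XY = \lambda(J-I) = XY'$.

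First I would observe that $X = M(S)$ has constant row sums: each row of $M(S)$ has exactly $|S| = s$ ones, since $(M(S))_{i,j} = 1$ precisely when $g_j \in g_i S$, and $|g_i S| = s$. Hence $XJ = sJ$, so $X$ satisfies the hypothesis of Theorem \ref{T2.thm} with $r = s$. Also, since $\det(J-I) \neq 0$ (the eigenvalues of $J-I$ are $n-1$ once and $-1$ with multiplicity $n-1$), the equation $XY = \lambda(J-I)$ forces $X$ to be invertible over $\mathbb{Q}$. Applying Theorem \ref{T2.thm} to both factorizations gives
\[
Y = \frac{\lambda}{s} J - \lambda X^{-1} = Y',
\]
so $M(T) = M(T')$. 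The final step is to note that the map $A \mapsto M(A)$ is injective on subsets of $G$: one can recover $A$ as $\{ g_j : (M(A))_{1,j} = 1 \}$ using the convention $g_1 = e$, since $(M(A))_{1,j} = 1 \iff g_1^{-1} g_j = g_j \in A$. Therefore $T = T'$.

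Alternatively, and perhaps more cleanly, one can argue entirely in the group ring without matrices: $ST = \lambda(G-e) = ST'$ as elements of $\mathbb{Z}[G]$, and since $\mathbb{Q}[G]$ is semisimple and $\sum_{g} g$ acts nontrivially, a short computation shows $S$ is a unit in $\mathbb{Q}[G]$ (its augmentation is $s \neq 0$ and $S \cdot (s^{-1}\lambda^{-1}(\text{something}))$ can be solved); then cancel $S$. I expect the main (minor) obstacle to be purely expository: making sure the invertibility of $X$ is justified before invoking Theorem \ref{T2.thm}, and confirming that $s$ divides $\lambda(n-1)$ so that $\frac{\lambda}{s}J$ makes sense — but this is exactly equation \eqref{prod.eq}, which holds whenever an \NF[(s,t)]{G}{\lambda} exists. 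Since there is genuinely no combinatorial content beyond the two cited theorems, the proof should be two or three lines.
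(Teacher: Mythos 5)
Your argument is correct and is exactly the paper's route: the paper presents Theorem \ref{T3.thm} as an immediate corollary of Theorems \ref{equiv.thm} and \ref{T2.thm}, with the same matrix computation (using $XJ=sJ$, invertibility of $X$ from $\det(J-I)\neq 0$, and recovering $T$ from $M(T)$) spelled out explicitly in the proof of Theorem \ref{T3-l.thm}. The details you add (constant row sums, injectivity of $A\mapsto M(A)$) are the right ones; no discrepancy with the paper.
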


The following can be proven in a similar manner, by solving for $X$ as a function of $Y$:

\begin{theorem}
\label{T3a.thm}
Suppose $(S,T)$ and $(S',T)$ are both $\lambda$-fold near-factorizations of a finite group $G$. Then $S = S'$.
\end{theorem}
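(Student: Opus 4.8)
The plan is to mirror the argument that yields Theorem \ref{T3.thm}, but now fixing $T$ and solving for the left factor. The starting point is Theorem \ref{equiv.thm}: the hypotheses give $M(S)M(T) = \lambda(J-I)$ and $M(S')M(T) = \lambda(J-I)$ as integer matrix equations. First I would record that $M(T)$ is invertible over $\que$: since $J-I$ has nonzero determinant (its eigenvalues are $n-1$ once and $-1$ with multiplicity $n-1$), the product $M(S)M(T) = \lambda(J-I)$ is nonsingular, hence both $M(S)$ and $M(T)$ are nonsingular.

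Next, I would prove a left-handed analogue of Theorem \ref{T2.thm}. The key observation is that for any subset $A \subseteq G$, the matrix $M(A)$ also has constant \emph{column} sums equal to $|A|$ — indeed $(M(A))_{i,j} = 1$ iff $g_i^{-1} g_j \in A$, and for fixed $j$ the map $g_i \mapsto g_i^{-1} g_j$ is a bijection of $G$, so exactly $|A|$ rows $i$ contribute. Thus $J M(A) = |A| J$, the column-sum counterpart of the row-sum identity $M(A) J = |A| J$. Writing $t = |T|$, from $M(S) M(T) = \lambda(J-I)$ we then get, multiplying on the right by $M(T)^{-1}$,
\[
M(S) = \lambda (J - I) M(T)^{-1} = \lambda J M(T)^{-1} - \lambda M(T)^{-1}.
\]
Now $J M(T)^{-1} = \tfrac{1}{t} (J M(T)) M(T)^{-1} = \tfrac{1}{t} J$ (using $J M(T) = tJ$), so $M(S) = \tfrac{\lambda}{t} J - \lambda M(T)^{-1}$. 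This expression depends only on $T$ (and $\lambda$, $t$), not on $S$. The identical computation with $S'$ in place of $S$ gives $M(S') = \tfrac{\lambda}{t} J - \lambda M(T)^{-1}$, so $M(S) = M(S')$. Finally, $S$ is recovered from $M(S)$ by reading off its first row: $(M(S))_{1,j} = 1$ iff $g_1^{-1} g_j = g_j \in S$ (recall $g_1 = e$), so $M(S) = M(S')$ forces $S = S'$.

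I do not expect any real obstacle here; the only points requiring a moment's care are (i) verifying the column-sum identity $JM(A) = |A|J$ — which is the transpose of the row-sum fact already used in the proof of Theorem \ref{T2.thm} and follows from the same bijection argument — and (ii) being explicit that $M$ is injective on subsets of $G$, which is immediate from inspecting row $1$. Alternatively, one could bypass the matrix formulation entirely and argue directly in the group ring: $ST = ST' = \lambda(G-e)$ implies $(S - S')\,T = 0$ in $\zed[G]$, and since $T$ (as a group ring element with $|T| = t > 0$) is not a zero divisor — it has nonzero image $t$ under the trivial character, hence nonzero determinant as a multiplication operator on $\zed[G]$ when $G$ is abelian, and more generally one invokes $M(T)$ nonsingular — we conclude $S = S'$. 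I would present the matrix version as the main line since it works uniformly for all finite groups and reuses Theorem \ref{equiv.thm} directly.
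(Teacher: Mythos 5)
Your proposal is correct and follows essentially the route the paper intends (it states that Theorem \ref{T3a.thm} is "proven in a similar manner, by solving for $X$ as a function of $Y$"): you invert $M(T)$ in the factorization $\lambda(J-I) = M(S)M(T)$ to get the left-handed analogue of Theorem \ref{T2.thm} and conclude $M(S)=M(S')$, hence $S=S'$. The only detail you add beyond the paper's sketch is the column-sum identity $JM(T)=tJ$ (the correct counterpart of the row-sum hypothesis $XJ=rJ$ in Theorem \ref{T2.thm}), which you justify properly, so the argument is complete.
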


Theorem \ref{T3.thm} can be extended as follows.

\begin{theorem}
\label{T3-l.thm}
Suppose $(S,T)$ is a $\lambda$-fold near-factorization of a finite group $G$ and $(S,T')$ is a $\lambda'$-fold near-factorization of the same group $G$. Then $\lambda = \lambda'$ and $T = T'$.
\end{theorem}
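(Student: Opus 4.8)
The plan is to push everything through the matrix reformulation of Theorem~\ref{equiv.thm} and exploit that $M(S)$ is invertible. By Theorem~\ref{equiv.thm}, the two hypotheses are exactly $M(S)M(T) = \lambda(J-I)$ and $M(S)M(T') = \lambda'(J-I)$, equations over $\zed$. As observed in the proof of Theorem~\ref{T2.thm}, $\det(J-I) \neq 0$, so both products are nonsingular and hence $M(S)$ is invertible over $\que$. Multiplying each equation on the left by $M(S)^{-1}$ gives $M(T) = \lambda\,M(S)^{-1}(J-I)$ and $M(T') = \lambda'\,M(S)^{-1}(J-I)$, and therefore the single integer identity $\lambda'\,M(T) = \lambda\,M(T')$.

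Next I would use that $M(T)$ and $M(T')$ are $0$--$1$ matrices and that neither is the zero matrix: $T$ cannot be empty, since otherwise $ST = 0 \neq \lambda(G-e)$ (here $\lambda \geq 1$ since $(S,T)$ is a near-factorization), and similarly $T' \neq \emptyset$. Choosing indices $i,j$ with $(M(T))_{i,j} = 1$, the identity $\lambda'\,M(T) = \lambda\,M(T')$ forces $\lambda \cdot (M(T'))_{i,j} = \lambda'$. Since $(M(T'))_{i,j} \in \{0,1\}$ and $\lambda' \geq 1$, the entry cannot be $0$, so it equals $1$ and hence $\lambda = \lambda'$.

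Once $\lambda = \lambda'$, both $(S,T)$ and $(S,T')$ are $\lambda$-fold near-factorizations of $G$ with the same first component, so Theorem~\ref{T3.thm} immediately yields $T = T'$ (alternatively, the identity above has already collapsed to $M(T) = M(T')$, and $A \mapsto M(A)$ is injective on subsets since its first row recovers the indicator of $A$ when $g_1 = e$). I do not expect a genuine obstacle; the only point needing a touch of care is excluding the degenerate case that the chosen entry of $M(T')$ vanishes, which is precisely where positivity of $\lambda'$ (equivalently, nonemptiness of $T'$) enters. Everything else is linear algebra already assembled in Theorems~\ref{equiv.thm}, \ref{T2.thm}, and~\ref{T3.thm}.
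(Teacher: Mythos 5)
Your proof is correct and takes essentially the same route as the paper: both arguments reduce to the proportionality $\lambda'\,M(T) = \lambda\,M(T')$ (the paper obtains it via the explicit formula of Theorem~\ref{T2.thm}, you by left-multiplying by $M(S)^{-1}$ directly) and then use that $M(T)$ and $M(T')$ are nonzero $0$--$1$ matrices to force $\lambda = \lambda'$ and $T = T'$. Your explicit check that $T$ and $T'$ are nonempty merely spells out a point the paper leaves implicit.
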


\begin{proof}
Let $X = M(S)$, $Y = M(T)$ and $Y' = M(T')$. Then $X, Y$ and $Y'$ are 0-1 matrices such that $\lambda(J-I) = XY$, $\lambda' (J-I) = XY'$ and $XJ = sJ$. From Theorem \ref{T2.thm}, we have
\[ Y = \lambda \left( \frac{J}{s}- X^{-1}\right) \quad \text{and} \quad Y' = \lambda' \left( \frac{J}{s}- X^{-1}\right) .\]
Hence, 
\[ Y' = \frac{\lambda'}{\lambda} Y.\]
Because $Y$ and $Y'$ are 0-1 matrices, it must be the case that $\lambda = \lambda'$. Then $Y = Y'$ and hence $T = T'$.
\end{proof}

Theorem \ref{T3a.thm} can be extended in a similar fashion; the proof is omitted.

\begin{theorem}
\label{T3a-l.thm}
Suppose $(S,T)$ is a $\lambda$-fold near-factorization of a finite group $G$ and $(S',T)$ is a $\lambda'$-fold near-factorization of the same group $G$. Then $\lambda = \lambda'$ and $S = S'$.
\end{theorem}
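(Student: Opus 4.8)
The plan is to mirror the proof of Theorem~\ref{T3-l.thm}, but now solving for $X$ as a function of $Y$ rather than the reverse. First I would set $Y = M(T)$, $X = M(S)$ and $X' = M(S')$, so that all three are $0$-$1$ matrices satisfying the integral equations $\lambda(J-I) = XY$ and $\lambda'(J-I) = X'Y$. The key additional observation is that $YJ = tJ$, where $t = |T|$: each row of $M(T)$ has exactly $t$ ones, since the $i$th row records which elements lie in $g_iT$. Since $\det(J-I) \neq 0$, both $Y$ and the matrix factors are invertible, so from $\lambda(J-I) = XY$ we may write $X = \lambda(J-I)Y^{-1}$.

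The next step is to obtain a closed form for $X$ analogous to the formula in Theorem~\ref{T2.thm}. By the same manipulation — using $JY^{-1} = \frac{1}{t}(YJ)Y^{-1} = \frac{1}{t}J$, which follows from $YJ = tJ$ — one gets
\[
X = \lambda\bigl(J Y^{-1} - Y^{-1}\bigr) = \lambda\left(\frac{J}{t} - Y^{-1}\right),
\]
and likewise $X' = \lambda'\left(\frac{J}{t} - Y^{-1}\right)$. Hence $X' = \frac{\lambda'}{\lambda}X$. Since $X$ and $X'$ are both $0$-$1$ matrices (and $X$ is not the zero matrix, as $S \neq \emptyset$ because $s\geq\lambda\geq 1$), the scalar $\lambda'/\lambda$ must equal $1$. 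Therefore $\lambda = \lambda'$, whence $X = X'$ and so $S = S'$.

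Alternatively — and this is the route I would actually take to keep the exposition short, since the paper says "the proof is omitted" for Theorem~\ref{T3a-l.thm} and then extends that one — I would simply invoke Lemma~\ref{T3-l.thm} together with the symmetry observation already established in the excerpt: $(S,T)$ is an \NF[(s,t)]{G}{\lambda} if and only if $(T^{(-1)},S^{(-1)})$ is an \NF[(t,s)]{G}{\lambda}. Applying this to both hypotheses turns $(S,T)$ into $(T^{(-1)},S^{(-1)})$ and $(S',T)$ into $(T^{(-1)},S'^{(-1)})$, which are $\lambda$- and $\lambda'$-fold near-factorizations sharing the common first coordinate $T^{(-1)}$. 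Theorem~\ref{T3-l.thm} then immediately gives $\lambda = \lambda'$ and $S^{(-1)} = S'^{(-1)}$, hence $S = S'$.

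The only real subtlety — and the one step worth stating carefully rather than waving through — is the inference that a scalar multiple relating two $0$-$1$ matrices must be $1$: this needs that the matrix in question is nonzero, which here is guaranteed because $S$ (equivalently $S'$) is nonempty, a consequence of $s \geq \lambda \geq 1$. Everything else is routine linear algebra over $\zed$ or $\que$, identical in spirit to the computations already carried out for Theorems~\ref{T2.thm} and \ref{T3-l.thm}. I expect no genuine obstacle; the "hard part," such as it is, is merely bookkeeping the transpose-versus-inverse direction and recording the row-sum condition $YJ = tJ$ in place of $XJ = sJ$.
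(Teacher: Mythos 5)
Your proposal is correct, and your first route is essentially the paper's (omitted) argument: mirror Theorem~\ref{T2.thm} by solving for $X$ in terms of $Y$, deduce $X' = \tfrac{\lambda'}{\lambda}X$, and use the fact that both are nonzero $0$-$1$ matrices. One bookkeeping slip worth fixing: to evaluate $X = \lambda(J-I)Y^{-1}$ you need $JY^{-1} = \tfrac{1}{t}J$, which follows from the \emph{column}-sum condition $JY = tJ$, not from the row-sum condition $YJ = tJ$ that you invoke (your displayed manipulation $JY^{-1} = \tfrac{1}{t}(YJ)Y^{-1} = \tfrac{1}{t}J$ is circular as written). The argument survives because $M(T)$ has every row \emph{and} every column summing to $t$ (the $j$th column records the elements of $g_jT^{(-1)}$), so the needed identity does hold; just cite the correct one. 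Your second route is a genuinely different and cleaner reduction: since $ST = \lambda(G-e)$ if and only if $T^{(-1)}S^{(-1)} = \lambda(G-e)$, the two hypotheses become $\lambda$- and $\lambda'$-fold near-factorizations with common first factor $T^{(-1)}$, and Theorem~\ref{T3-l.thm} immediately gives $\lambda = \lambda'$ and $S^{(-1)} = (S')^{(-1)}$, hence $S = S'$. That version avoids redoing the linear algebra (and the transpose-versus-inverse bookkeeping entirely) at the cost of invoking the inversion correspondence from Section~\ref{properties.sec}; either is acceptable, and your explicit remark that $\lambda'/\lambda = 1$ requires the matrices to be nonzero is a point the paper leaves implicit.
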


\subsection{Symmetric near-factorizations}
\label{symm.sec}

A subset $A \subseteq G$ is \emph{symmetric} provided that  $g \in A$ if and only if $g^{-1} \in A$. In group ring notation, we have
$A = A^{(-1)}$. 
A near-factorization $(S,T)$ of a group $G$ is \emph{symmetric} if $S$ and $T$ are both symmetric.
The following lemma indicates that a near-factorization is symmetric if one of the two factors is symmetric.

\begin{lemma}
\label{SimpliesT.lem}
If $(S,T)$ is an \NF{G}{\lambda}, 
where one of the factors $S$ or $T$ is symmetric, then the other factor 
is also symmetric.
\end{lemma}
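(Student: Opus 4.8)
The plan is to pass to the matrix formulation of Theorem \ref{equiv.thm} and read off the conclusion from the explicit formula in Theorem \ref{T2.thm}. The one preliminary fact I need is the matrix analogue of inverting a set: directly from the definition of $M(A)$ one has $(M(A))_{i,j}=1 \iff g_i^{-1}g_j\in A \iff g_j^{-1}g_i\in A^{(-1)} \iff (M(A^{(-1)}))_{j,i}=1$, so $M(A^{(-1)})=M(A)^{T}$. Hence a subset $A\subseteq G$ is symmetric exactly when the 0-1 matrix $M(A)$ is a symmetric matrix, and the lemma becomes a statement about matrices.

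Now write $X=M(S)$ and $Y=M(T)$. By Theorem \ref{equiv.thm}, $XY=\lambda(J-I)$; moreover every row sum of $X$ equals $|S|=s$, and $s\ge 1$ since $S\neq\emptyset$, so $XJ=sJ$ and Theorem \ref{T2.thm} gives $Y=\frac{\lambda}{s}J-\lambda X^{-1}$. Suppose $S$ is symmetric, i.e.\ $X=X^{T}$. Transposing this identity and using $J^{T}=J$ and $(X^{-1})^{T}=(X^{T})^{-1}=X^{-1}$ yields $Y^{T}=\frac{\lambda}{s}J-\lambda X^{-1}=Y$, so $M(T)=M(T^{(-1)})$ and therefore $T=T^{(-1)}$, i.e.\ $T$ is symmetric. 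The case in which $T$ (rather than $S$) is assumed symmetric reduces to this one: $(T^{(-1)},S^{(-1)})=(T,S^{(-1)})$ is again an \NF{G}{\lambda} with symmetric first coordinate (by the equivalence noted just before Theorem \ref{equiv.thm}), so the proven case forces $S^{(-1)}$, hence $S$, to be symmetric. Alternatively one simply reruns the computation starting from the analogous formula expressing $X$ in terms of $Y$.

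I do not expect a genuine obstacle; the only care needed is in the two bookkeeping steps — establishing $M(A^{(-1)})=M(A)^{T}$, and checking that the hypotheses of Theorems \ref{equiv.thm} and \ref{T2.thm} apply (in particular $S\neq\emptyset$, so that $r=s$ is a positive integer). If one prefers not to quote Theorem \ref{T2.thm}, the same conclusion follows by observing that $X$ commutes with $J$ (its row sums and its column sums are all equal to $s$), hence $X^{-1}$ commutes with $J-I$, and then transposing $Y=\lambda X^{-1}(J-I)$; this is where symmetry of $S$ is really used, since it is what lets the transpose fix $Y$.
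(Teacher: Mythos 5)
Your proof is correct, but it takes a genuinely different route from the paper's. The paper argues entirely inside the group ring: assuming $S=S^{(-1)}$, it computes $T^{(-1)}ST$ in two ways, once as $(ST)^{(-1)}T=\lambda(G-e)T=\lambda tG-\lambda T$ and once as $T^{(-1)}\lambda(G-e)=\lambda tG-\lambda T^{(-1)}$, and equates the two to get $T=T^{(-1)}$, handling the other direction symmetrically. You instead pass to the matrix model: your preliminary identity $M(A^{(-1)})=M(A)^{T}$ (which is correct, and is a fact the paper never states explicitly) converts symmetry of a subset into symmetry of its matrix, and then transposing the explicit mate formula $Y=\frac{\lambda}{s}J-\lambda X^{-1}$ from Theorem \ref{T2.thm} (or, in your alternative, $Y=\lambda X^{-1}(J-I)$ together with $XJ=JX=sJ$) yields $Y^{T}=Y$; your reduction of the case where $T$ is symmetric to the already-proven case via the pair $(T^{(-1)},S^{(-1)})$ is also sound, and your bookkeeping (nonemptiness of $S$, invertibility of $X$ from $\det(J-I)\neq 0$, recovering $T$ from the first row of $M(T)$ since $g_1=e$) is all easy and in order. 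What your route buys is that the lemma drops out of machinery the paper has already set up (Theorems \ref{equiv.thm} and \ref{T2.thm}) by a single transposition, with the useful side observation that $M$ intertwines $A\mapsto A^{(-1)}$ with matrix transpose; what the paper's route buys is brevity and self-containedness, since the two-way evaluation of $T^{(-1)}ST$ needs nothing beyond the defining identity $ST=\lambda(G-e)$ and works directly in $\zed[G]$ without invoking matrices or the uniqueness-of-mate formula.
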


\begin{proof}
Suppose $ST = \lambda (G - e)$ and $S=S^{(-1)}$.
We compute $ T^{(-1)} ST$ in two ways. First,
\begin{align*}
T^{(-1)} ST &= T^{(-1)} S^{(-1)} T \\
&= (ST)^{(-1)} T \\
&=\lambda(G - e)^{(-1)} T \\
&=\lambda(G - e) T\\
&= \lambda t G - \lambda T.
\end{align*}
Second,
\begin{align*}
T^{(-1)} ST &= T^{(-1)} \lambda(G - e) \\
&= \lambda t G - \lambda T^{(-1)}.
\end{align*}
Equating the two expressions for $ T^{(-1)} ST$, we see that $T = T^{(-1)}$.

If $T=T^{(-1)}$, then we can prove $S=S^{(-1)}$ in a similar fashion.
\end{proof}


Now we introduce the notion of equivalence of subsets of a group $G$.
Let $(G,\cdot)$ be a multiplicative group and let 
$A, B \subseteq G$. 
We say that  $B$ is \emph{left equivalent} to $A$ if there exists $f \in \Aut(G)$ and $g\in G$ such that $B=gf(A)$.
Also, $B$ is \emph{right equivalent} to $A$ if there exists $f \in \Aut(G)$ and $g\in G$ such that $B=f(A)g$.

\begin{lemma}
$B$ is {left equivalent} to $A$ if and only if $A$ is {left equivalent} to $B$.
\end{lemma}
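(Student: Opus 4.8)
The plan is to show that the relation ``left equivalent'' is symmetric; since the statement is an ``if and only if,'' it suffices to prove one implication and then observe that the argument is symmetric in $A$ and $B$. So I would start by assuming $B$ is left equivalent to $A$, i.e.\ $B = g\,f(A)$ for some $f \in \Aut(G)$ and $g \in G$, and aim to produce $g' \in G$ and $f' \in \Aut(G)$ with $A = g'\,f'(B)$.

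The key (easy) observation is that an automorphism, applied elementwise to a subset, intertwines with left translation: for any $h \in G$ and $X \subseteq G$ we have $f(hX) = f(h)\,f(X)$, because $f$ is a bijective homomorphism. Applying $f^{-1}$ — which again lies in $\Aut(G)$, since $\Aut(G)$ is a group — to both sides of $B = g\,f(A)$ gives $f^{-1}(B) = f^{-1}(g)\,A$, and hence $A = \big(f^{-1}(g)\big)^{-1} f^{-1}(B)$. Taking $g' = \big(f^{-1}(g)\big)^{-1} \in G$ and $f' = f^{-1} \in \Aut(G)$ yields $A = g'\,f'(B)$, which is exactly the statement that $A$ is left equivalent to $B$. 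Swapping the roles of $A$ and $B$ gives the converse, so the equivalence follows.

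I do not expect any genuine obstacle here; the only points needing a moment's care are that $\Aut(G)$ is closed under taking inverses and that automorphisms distribute over the group operation when applied to subsets (in particular over left multiplication by a fixed element), both of which are immediate from the definitions. Everything else is routine bookkeeping of the witnesses $g$ and $f$.
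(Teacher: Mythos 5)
Your proof is correct and follows essentially the same route as the paper: apply $f^{-1}$ to $B = g\,f(A)$ and take the witness $f^{-1} \in \Aut(G)$ together with $\bigl(f^{-1}(g)\bigr)^{-1} = f^{-1}(g^{-1})$, which is exactly the $g^*$ used in the paper's argument.
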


\begin{proof}
Suppose $B$ is {left equivalent} to $A$. Then there exists $f \in \Aut(G)$ and $g\in G$ such that $B=g f(A)$.
Hence, $g^{-1} B= f(A)$ and $f^{-1}(g^{-1} B) = A$.
Define $g^* = f^{-1}(g^{-1})$; then $A = g^* f^{-1}(B)$. Because $f^{-1} \in \Aut(G)$, we see that
$A$ is left equivalent to $B$. 
\end{proof}

\begin{lemma} Let $(G,\cdot)$ be a multiplicative group and let 
$A, B \subseteq G$. Then  $B$ is {left equivalent} to $A$ if and only if  $B$ is {right equivalent} to $A$.
\end{lemma}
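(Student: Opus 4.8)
The plan is to use the observation that left translation and right translation by a fixed element of $G$ differ only by an inner automorphism; consequently one can pass between the two notions of equivalence simply by absorbing a conjugation into the automorphism part of the equivalence. This argument works for arbitrary (not necessarily abelian) $G$.

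First I would handle the ``left implies right'' direction. Suppose $B$ is left equivalent to $A$, say $B = g\,f(A)$ with $f \in \Aut(G)$ and $g \in G$. Rewrite this as $B = \bigl(g\,f(A)\,g^{-1}\bigr)\,g$, and let $c_g \in \Aut(G)$ denote conjugation by $g$, i.e.\ $c_g(x) = g x g^{-1}$. Then $c_g \circ f \in \Aut(G)$ and $(c_g \circ f)(A) = g\,f(A)\,g^{-1}$, so $B = (c_g \circ f)(A)\,g$, which shows that $B$ is right equivalent to $A$ (with the same group element $g$).

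The converse ``right implies left'' is entirely symmetric: if $B = f(A)\,g$, write $B = g\,\bigl(g^{-1} f(A)\,g\bigr) = g\,(c_{g^{-1}} \circ f)(A)$, and since $c_{g^{-1}} \circ f \in \Aut(G)$, this exhibits $B$ as left equivalent to $A$.

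I do not expect any genuine obstacle. The only points requiring (minor) care are that an inner automorphism $c_g$ is indeed an automorphism of $G$, and that a composition of automorphisms is again an automorphism, so that $c_g \circ f$ and $c_{g^{-1}} \circ f$ are legitimate choices for the automorphism part of the equivalence; both facts are standard.
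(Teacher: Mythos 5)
Your argument is correct and is essentially the paper's own proof: the paper defines $f^*(x) = g f(x) g^{-1}$, which is exactly your $c_g \circ f$, and rewrites $g f(A)$ as $f^*(A) g$ (with the converse handled symmetrically, as you do). No gaps.
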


\begin{proof}
Suppose $B$ is {left equivalent} to $A$. Then there exists $f \in \Aut(G)$ and $g\in G$ such that $B=g f(A)$.
Define $f^* \in \Aut (G)$ by $f^*(x) = g f(x) g^{-1}$ for all $x \in G$ (hence, $f(x) = g^{-1} f^*(x) g$). We have
\begin{align*}
B & =g f(A)\\
&= g (g^{-1} f^*(A) g)\\
& = f^*(A) g.
\end{align*}
Hence,  $B$ is {right equivalent} to $A$. The converse result is proven in a similar manner.
\end{proof}
Henceforth, we will just say that $A$ and $B$ are \emph{equivalent} if $B$ is left equivalent or right equivalent to $A$ or vice versa.

\medskip

Now we extend the equivalence notion from sets to near-factorizations.
Suppose $(S,T)$ and $(S',T')$ are both \NF[(s,t)]{G}{\lambda}.
We say that $(S,T)$ and $(S',T')$ are \emph{equivalent} near-factorizations if $S$ is equivalent to $S'$ and $T$ is equivalent to $T'$.

\begin{lemma}
\label{equivNF.lem}
Suppose  $(S,T)$ and $(S',T')$ are both \NF[(s,t)]{G}{\lambda}. Then  $(S,T)$ and $(S',T')$ are {equivalent} if and only if
there exists $f \in \Aut(G)$ and $g\in G$ such that $S' = f(S)g$ and $T' = g^{-1} f(T)$.
\end{lemma}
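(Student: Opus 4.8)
The plan is to prove the two implications separately; the forward implication ($\Leftarrow$) is a direct unwinding of the definitions, while the reverse implication ($\Rightarrow$) is the substantive one and hinges on the uniqueness of $\lambda$-mates established in Theorem \ref{T3.thm}.

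For ($\Leftarrow$), suppose $f \in \Aut(G)$ and $g \in G$ satisfy $S' = f(S)g$ and $T' = g^{-1}f(T)$. Then $S'=f(S)g$ exhibits $S'$ as right equivalent to $S$, so $S$ and $S'$ are equivalent as subsets of $G$; and $T'=g^{-1}f(T)$ exhibits $T'$ as left equivalent to $T$, so $T$ and $T'$ are equivalent. By the definition of equivalence of near-factorizations, $(S,T)$ and $(S',T')$ are then equivalent. (One may also note, though it is not needed here since $(S',T')$ is assumed to be a near-factorization, that $(f(S)g,\, g^{-1}f(T))$ is automatically an \NF[(s,t)]{G}{\lambda}, because applying the automorphism $f$ to group ring elements is a ring homomorphism fixing $e$, so $f(S)g\cdot g^{-1}f(T) = f(S)f(T) = f(ST) = f(\lambda(G-e)) = \lambda(G-e)$.)

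For ($\Rightarrow$), assume $(S,T)$ and $(S',T')$ are equivalent; in particular $S$ is equivalent to $S'$. Invoking the earlier lemmas (equivalence is symmetric, and left equivalence coincides with right equivalence), I may choose $f \in \Aut(G)$ and $g \in G$ with $S' = f(S)g$. Now $(f(S)g,\, g^{-1}f(T))$ is an \NF[(s,t)]{G}{\lambda} by the computation $f(S)g\cdot g^{-1}f(T) = f(S)f(T) = f(ST) = f(\lambda(G-e)) = \lambda(G-e)$. Thus both $(S',T')$ and $(f(S)g,\, g^{-1}f(T))$ are $\lambda$-fold near-factorizations of $G$ sharing the same first component $S' = f(S)g$, so Theorem \ref{T3.thm} forces $T' = g^{-1}f(T)$, which is precisely the desired conclusion.

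The only real insight required is in the reverse direction: rather than trying to combine the two a priori independent witnesses ``$S\sim S'$ via $(f_1,g_1)$'' and ``$T\sim T'$ via $(f_2,g_2)$'' — which would demand reconciling $f_1$ with $f_2$ — one fixes only the automorphism and translation realizing $S \sim S'$ and lets uniqueness of the $\lambda$-mate pin $T'$ into the matched form $g^{-1}f(T)$ for free. The secondary point, a routine one-line check, is that $(f(S)g, g^{-1}f(T))$ really is a near-factorization so that Theorem \ref{T3.thm} is applicable. In fact this argument yields slightly more than the statement claims: for near-factorizations of $G$, the pairs $(S,T)$ and $(S',T')$ are equivalent if and only if $S$ and $S'$ are equivalent as subsets, the equivalence of $T$ and $T'$ being automatic.
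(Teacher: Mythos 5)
Your proof is correct and follows essentially the same route as the paper: it uses the computation $f(S)g \cdot g^{-1}f(T) = f(ST) = \lambda(G-e)$ together with uniqueness of $\lambda$-mates (Theorem \ref{T3.thm}) to pin down $T' = g^{-1}f(T)$ once $S' = f(S)g$ is chosen from the equivalence of $S$ and $S'$. The only difference is that you spell out the trivial ($\Leftarrow$) direction and the closing remark explicitly, which the paper leaves implicit.
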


\begin{proof}
Suppose $f \in \Aut(G)$ and $g\in G$. We first show that $(f(S)g, g^{-1} f(T))$ is an \NF[(s,t)]{G}{\lambda}. We have 
\begin{align*}
(f(S)g)\,(g^{-1}f(T)) & =  f(S)f(T)\\
&=f(ST)\\
&=f(\lambda(G-e)) \\
&=\lambda(G-e).
\end{align*}
Now suppose that  $(S,T)$ and $(S',T')$ are equivalent near-factorizations. Because $S$ and $S'$ are equivalent, we must have $S' = f(S)g$ for some $f \in \Aut(G)$ and $g\in G$.
We have shown above that $S' (g^{-1}f(T)) = \lambda(G-e)$, so $g^{-1}f(T)$ is a mate to $S'$. By uniqueness of mates (Theorem \ref{T3.thm}), we must have $T' = g^{-1}f(T)$.
\end{proof}

In the case $\lambda = 1$, the following powerful result is known for abelian groups.

\begin{theorem}[{\cite[Proposition 2]{CGHK}}]
\label{symmetric.lem}
Suppose $G$ is an abelian group. 
If $(S,T)$ is an 
\NF[(s,t)]{G}{1}, 
then $(S,T)$ is equivalent to a symmetric \NF[(s,t)]{G}{1}.
\end{theorem}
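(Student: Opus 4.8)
The plan is to start from an arbitrary \NF[(s,t)]{G}{1}, say $(S,T)$, and cook up an automorphism and a translation that symmetrize it. The natural candidate transformation is inversion combined with translation: since $G$ is abelian, the map $x \mapsto x^{-1}$ is an automorphism, so by Lemma~\ref{equivNF.lem} the pair $(S^{(-1)}g,\, g^{-1}T^{(-1)})$ is again an \NF[(s,t)]{G}{1} for every $g \in G$. The goal is to choose $g$ so that $S^{(-1)}g$ (equivalently, $g^{-1}T^{(-1)}$) becomes a symmetric set; by Lemma~\ref{SimpliesT.lem} it suffices to symmetrize one of the two factors, since symmetry of one forces symmetry of the other.

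First I would recall (or reprove) the fact that for an \NF[(s,t)]{G}{1} in an abelian group, the ``complement/mate'' formula of Theorem~\ref{T2.thm} gives $T$ explicitly in terms of $S$, and dualizing, $S$ in terms of $T$. Working in $\zed[G]$, the equation $ST = G - e$ together with $SG = sG$, $TG = tG$ forces a rigid relationship. The key algebraic step is to compute the product $S\, S^{(-1)}$ (or $T\, T^{(-1)}$): one can show that $S S^{(-1)} T T^{(-1)} = (G-e)(G-e)^{(-1)} = (G-e)^2 = nG - 2(G-e) - \cdots$, and more usefully that $S^{(-1)} T = $ (some translate of $T^{(-1)}S$), which pins down how inversion moves the pair. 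Concretely, from $ST = G-e$ one gets $S^{(-1)}T^{(-1)} = G - e$ as well, and the task is to compare the set $S$ with the set $S^{(-1)}$: I expect to show $S^{(-1)} = Sg$ for a suitable $g \in G$, i.e. $S$ is symmetric up to translation.

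To extract that translate: apply a character $\chi$ of $G$. For the principal character, $\chi_0(S) = s$, $\chi_0(T) = t$. For a nonprincipal character $\chi$, evaluating $ST = G-e$ gives $\chi(S)\chi(T) = -1$, so $\chi(S) \neq 0$ and $\chi(S)\overline{\chi(S)}\cdot\chi(T)\overline{\chi(T)} = 1$, i.e. $|\chi(S)|^2 |\chi(T)|^2 = 1$. Since these are algebraic integers (in fact $|\chi(S)|^2$ is a totally positive algebraic integer) whose product over a suitable set is constrained, one deduces $|\chi(S)|^2 = |\chi(T)|^2 = 1$ is not forced in general — rather $\chi(S)\chi(T) = -1$ alone is what we have. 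Then $\chi(S^{(-1)}) = \overline{\chi(S)}$, and I want $\chi(S^{(-1)}) = \chi(g)\chi(S)$ for all $\chi$ and a single fixed $g$; equivalently $\overline{\chi(S)} = \chi(g)\chi(S)$, i.e. $|\chi(S)|^2 = \chi(g) \chi(S)^2$. The cleanest route is probably not via characters at all but via the explicit mate formula: since mates are unique (Theorem~\ref{T3.thm}), and since $(S^{(-1)}, T^{(-1)})$ is an \NF with the roles symmetric, one identifies $S^{(-1)}$ as the unique mate of $T^{(-1)}$, then uses the $\lambda=1$ structure to show this unique mate must be a translate of $S$.

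I expect the main obstacle to be precisely this last identification — proving that inversion sends $S$ to a \emph{translate} of itself (rather than to some unrelated set). In the $\lambda = 1$ case this should follow from the strong uniqueness/rigidity: $(S,T)$ and $(T^{(-1)}, S^{(-1)})$ are both \NF[(t,s)]{G}{1}'s obtained from each other, and chasing the explicit formulas $T = \tfrac{1}{s}G - S^{-1}$ (inverse in the group ring sense of Theorem~\ref{T2.thm}) yields that $(S^{(-1)})^{-1}$, the group-ring inverse, differs from $S^{-1}$ only by the $G$-component, which translates into $S^{(-1)}$ being a translate of $S$. Once $S^{(-1)} = Sg^{-1}$ for some $g$, pick $h \in G$ with $h^2 = g$ if it exists — but $2$-torsion may obstruct this, so instead directly set $S' = S h$ where $h$ is chosen so that $(S')^{(-1)} = h^{-1} S^{(-1)} = h^{-1} S g^{-1} = S h$ requires $h^{-1} g^{-1} = h$ i.e. $g = h^{-2}$; to sidestep the square-root issue one observes that $g$ itself lies in a controlled subgroup (it is determined by $S$ and one can show $g \in S S^{(-1)}$-related torsion), and in the worst case one replaces ``translate'' by ``translate by an element admitting a square root,'' which the rigidity of near-factorizations guarantees. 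Having produced a symmetric $S'$ equivalent to $S$, Lemma~\ref{SimpliesT.lem} makes its mate $T'$ symmetric too, and Lemma~\ref{equivNF.lem} certifies that $(S', T')$ is equivalent to $(S,T)$, completing the proof.
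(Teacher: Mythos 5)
Your proposal is not a proof: it correctly reduces the problem (via Theorem \ref{transequiv.thm} and Lemma \ref{SimpliesT.lem}) to showing that some translate $Sg$ is symmetric, but the two steps that carry all the difficulty are left unestablished. Note also that the paper itself does not prove this statement; it cites \cite[Proposition 2]{CGHK}, so the argument really does have to be supplied in full. First gap: the claim that inversion sends $S$ to a translate of itself does not follow from ``chasing the mate formula.'' Applying $(\,\cdot\,)^{(-1)}$ to $S = \tfrac1t G - T^{-1}$ just reproduces the statement that $S^{(-1)}$ is the (unique) mate of $T^{(-1)}$; it compares $S^{(-1)}$ with $T^{(-1)}$, not with $S$. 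Uniqueness of mates can \emph{transfer} a translate relation from one factor to the other (if $T^{(-1)} = Tx$ then $Sx^{-1}$ is a mate of $T^{(-1)}$, so $S^{(-1)} = Sx^{-1}$), but it cannot create one, and your character-theoretic paragraph, as you concede, yields nothing beyond $\chi(S)\chi(T)=-1$. The missing idea is to look at $C = ST^{(-1)}$ in $\zed[G]$: since $G$ is abelian, $CC^{(-1)} = (SS^{(-1)})(TT^{(-1)}) = (G-e)(G-e)^{(-1)} = (n-2)G + e$, where $n = |G|$. Comparing the coefficient of $e$ gives $\sum_g C_g^2 = n-1$, while $\sum_g C_g = st = n-1$; since the $C_g$ are nonnegative integers, all $C_g \in \{0,1\}$, so $C = G - x$ for some $x \in G$. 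Then $G - x = (G-e)x = STx$, and cancelling $S$ (invertible in $\que[G]$) gives $T^{(-1)} = Tx$, hence also $S^{(-1)} = Sx^{-1}$. This is exactly where $\lambda = 1$ is used (for $\lambda > 1$ the same computation gives $\sum_g C_g^2 = \lambda^2(n-1)$ versus $\sum_g C_g = \lambda(n-1)$, which forces nothing --- consistent with the paper's observation that the theorem fails for $\lambda > 1$).

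Second gap: the square-root obstruction is not something ``the rigidity of near-factorizations guarantees''; that sentence is a restatement of what must be proved, and the vague appeal to ``$S S^{(-1)}$-related torsion'' is not an argument. It is resolved by an elementary parity argument. Given $S^{(-1)} = Sx^{-1}$, the map $a \mapsto a^{-1}x$ is a well-defined involution of the set $S$; a fixed point $a \in S$ satisfies $a^2 = x$, and then $S' = Sa^{-1}$ is symmetric, since $(Sa^{-1})^{(-1)} = S^{(-1)}a = Sx^{-1}a = Sa^{-1}$. If $n$ is even, then $st = n-1$ is odd, so $s$ is odd and the involution must have a fixed point. If $n$ is odd, squaring is a bijection of $G$, so $x = h^2$ for some $h \in G$ and $Sh^{-1}$ is symmetric by the same computation. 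In either case one obtains a symmetric translate $S'$ of $S$; Lemma \ref{SimpliesT.lem} then makes the corresponding mate symmetric, and the pair is translate-equivalent (hence equivalent) to $(S,T)$. Without the $C = ST^{(-1)} = G - x$ identity and this fixed-point/odd-order case analysis, your outline cannot be completed as written.
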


Theorem  \ref{symmetric.lem} establishes that, up to equivalence, we only need to consider symmetric near-factorizations when $\lambda=1$ and the group $G$ is abelian. This simplifies  both theoretical analysis and computational searches. 
A natural question is whether Theorem \ref{symmetric.lem} can be extended to near-factorizations with $\lambda > 1$. Interestingly, the answer to this question is negative. In fact, we will show in Section \ref{DSandPDS.sec} that there are numerous examples of \NF[(s,t)]{G}{\lambda} with $\lambda > 1$ that are not equivalent to  symmetric near-factorizations. 

Actually, the stated equivalence is guaranteed in Theorem  \ref{symmetric.lem} even if we only consider equivalence via translations.  We define this concept now. Let $(G,\cdot)$ be a multiplicative group and let 
$A, B \subseteq G$. 
We say that  $B$ is \emph{left translate-equivalent} to $A$ if there exists $g\in G$ such that $B=gA$.
Also, $B$ is \emph{right translate-equivalent} to $A$ if there exists $g\in G$ such that $B=Ag$.
Because $B=gA$ if and only if $A = g^{-1}B$, it is immediate that
$B$ is {left translate-equivalent} to $A$ if and only if $A$ is {left translate-equivalent} to $B$.
Similarly, $B$ is {right translate-equivalent} to $A$ if and only if $A$ is {right translate-equivalent} to $B$.
Note that, in a nonabelian group, left equivalence does not necessarily imply right equivalence, and vice versa.




Suppose $(S,T)$ and $(S',T')$ are both \NF[(s,t)]{G}{\lambda}.
We say that the near-factorization $(S,T)$ is \emph{translate-equivalent} to the near-factorization $(S',T')$  if $S' = Sg$ and $T' = g^{-1} T$ for some $g \in G$.

\begin{theorem}
\label{transequiv.thm}
Suppose  $(S,T)$ is an \NF[(s,t)]{G}{\lambda}. Then $(S,T)$ is equivalent to a symmetric \NF[(s,t)]{G}{\lambda} if and only if 
$(S,T)$ is translate-equivalent to a symmetric \NF[(s,t)]{G}{\lambda}.
\end{theorem}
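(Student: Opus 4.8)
The plan is to prove the two implications separately; one is essentially free and the other carries all the content.

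For the ``if'' direction, I would observe that translate-equivalence is a special case of equivalence: if $S' = Sg$ and $T' = g^{-1}T$ for some $g \in G$, then taking $f$ to be the identity automorphism $\mathrm{id}_G \in \Aut(G)$ gives $S' = f(S)g$ and $T' = g^{-1}f(T)$, so $(S,T)$ and $(S',T')$ are equivalent near-factorizations by Lemma~\ref{equivNF.lem}. Hence, if $(S,T)$ is translate-equivalent to a symmetric \NF[(s,t)]{G}{\lambda}, it is in particular equivalent to one.

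For the ``only if'' direction, suppose $(S,T)$ is equivalent to a symmetric \NF[(s,t)]{G}{\lambda}, call it $(S',T')$. By Lemma~\ref{equivNF.lem} there exist $f \in \Aut(G)$ and $g \in G$ with $S' = f(S)g$ and $T' = g^{-1}f(T)$. I would set $h = f^{-1}(g) \in G$ and claim that $(Sh,\, h^{-1}T)$ is a symmetric \NF[(s,t)]{G}{\lambda} that is translate-equivalent to $(S,T)$. The crucial step is to convert the symmetry relations on $S'$ and $T'$ into relations involving only $S$, $T$, and $h$. Since $f$ is a group automorphism, $(f(A))^{(-1)} = f(A^{(-1)})$ for every $A \subseteq G$, so the equation $S' = (S')^{(-1)}$ reads $f(S)g = g^{-1}f(S^{(-1)})$; applying $f^{-1}$ (extended to a ring automorphism of $\zed[G]$) and writing $h = f^{-1}(g)$ yields $Sh = h^{-1}S^{(-1)}$, i.e.\ $S^{(-1)} = hSh$. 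The same manipulation applied to $T' = (T')^{(-1)}$ gives $T^{(-1)} = h^{-1}Th^{-1}$. From these two identities one computes $(Sh)^{(-1)} = h^{-1}S^{(-1)} = h^{-1}(hSh) = Sh$ and $(h^{-1}T)^{(-1)} = T^{(-1)}h = (h^{-1}Th^{-1})h = h^{-1}T$, so $Sh$ and $h^{-1}T$ are both symmetric. Finally, $(Sh)(h^{-1}T) = ST = \lambda(G-e)$, so $(Sh,\, h^{-1}T)$ is an \NF[(s,t)]{G}{\lambda}; it is symmetric, and it is translate-equivalent to $(S,T)$ directly from the definition (with group element $h$).

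I do not expect a genuine obstacle here: the argument is short once one notices that pushing the symmetry relations through $f^{-1}$ absorbs the automorphism into the single translating element $h = f^{-1}(g)$. The only care needed is the bookkeeping of left versus right translates in the possibly nonabelian group $G$ — in particular, verifying $(f(A))^{(-1)} = f(A^{(-1)})$ and checking that one and the same element $h$ simultaneously witnesses the symmetry of $Sh$ and of $h^{-1}T$, which is exactly what the two derived identities $S^{(-1)} = hSh$ and $T^{(-1)} = h^{-1}Th^{-1}$ supply.
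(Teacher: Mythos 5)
Your proposal is correct and follows essentially the same route as the paper: both invoke Lemma~\ref{equivNF.lem}, absorb the automorphism into the single translating element $h = f^{-1}(g)$, and arrive at the same symmetric near-factorization $(Sh,\,h^{-1}T)$, which equals $(f^{-1}(S'),\,f^{-1}(T'))$. The only cosmetic difference is that the paper gets symmetry immediately from the fact that $f^{-1}$ is an automorphism applied to the symmetric sets $S',T'$, whereas you re-derive it by hand through the identities $S^{(-1)} = hSh$ and $T^{(-1)} = h^{-1}Th^{-1}$.
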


\begin{proof}
Suppose $(S,T)$ is equivalent to a  symmetric \NF[(s,t)]{G}{\lambda}, say $(S',T')$.
Because $(S,T)$ is equivalent to $(S',T')$, it follows  from Lemma \ref{equivNF.lem}
that there exists $f \in \Aut(G)$ and $g\in G$ such that $S' = f(S)g$ and $T' = g^{-1} f(T)$.
Define $S'' = f^{-1}(S')$  and $T'' = f^{-1}(T')$. Clearly $S''$ and $T''$ are both symmetric.
We also have \[S'' = f^{-1}(f(S)g) = S g^*,\] where $g^* = f^{-1}(g)$. Similarly, \[T'' = f^{-1}(g^{-1}f(T)) = (g^*)^{-1} T.\]


Further, it is easy to see that
$(S'', T'')$ is an \NF[(s,t)]{G}{\lambda}, because
\begin{align*} S''T'' &= f^{-1}(S')f^{-1}(T') \\&= f^{-1}(S'T') \\&= f^{-1}(\lambda(G - e)) \\&= \lambda(G - e).\end{align*}
Hence, $(S,T)$ is translate-equivalent to $(S'',T'')$, which is a symmetric \NF[(s,t)]{G}{\lambda}.

The converse assertion is obviously true.
\end{proof}

\subsection{Upper bounds on $\lambda$}
\label{lambda.sec}

We will prove some upper bounds on $\lambda$ in this section. First, we prove some preliminary lemmas.

\begin{lemma}
\label{Sinverse.lem}
Suppose $G$ is a group of order $n$ and $(S,T)$ is an 
\NF[(s,t)]{G}{\lambda}.  Then $S^{(-1)} \subseteq G \setminus T$.
\end{lemma}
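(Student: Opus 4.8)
The plan is to argue directly from the defining group ring equation $ST = \lambda(G - e)$, exploiting the fact that the coefficient of the identity $e$ on the right-hand side is $0$. Concretely, the statement $S^{(-1)} \subseteq G \setminus T$ is just the assertion that $S^{(-1)} \cap T = \emptyset$, i.e.\ that no $s \in S$ has $s^{-1} \in T$, so it suffices to rule out such an $s$.

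First I would fix an arbitrary $s \in S$ and suppose, for contradiction, that $s^{-1} \in T$. Then $(s, s^{-1})$ is a pair $(s',t')$ with $s' \in S$, $t' \in T$ and $s' t' = e$. Expanding the product $ST$ in $\zed[G]$ as in Section~\ref{sec1}, the coefficient of $e$ in $ST$ counts exactly the pairs $(s',t')$ with $s' \in S$, $t' \in T$, $s' t' = e$ (weighted by the $0$-$1$ coefficients of $S$ and $T$), so this coefficient is at least $1$. But $ST = \lambda(G - e)$ has coefficient $0$ at $e$, and since all coefficients of $S$ and $T$ are non-negative the coefficient of $e$ in $ST$ is non-negative and hence cannot equal a positive number while also equalling $0$ — a contradiction. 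Therefore $s^{-1} \notin T$ for every $s \in S$, which is the claim.

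There is essentially no obstacle here: the conclusion is an immediate reformulation of the condition ``$st \neq e$ for any $s \in S$ and $t \in T$'' already recorded when \NF{G}{\lambda} were defined, using that $st = e \iff t = s^{-1}$. The only point requiring a little care is to phrase the counting of pairs so that it is valid in the possibly nonabelian setting, but since $st=e$ is equivalent to $t=s^{-1}$ regardless of commutativity, the argument goes through verbatim. (Alternatively, one could invoke Theorem~\ref{equiv.thm} and read off the vanishing diagonal of $M(S)M(T) = \lambda(J-I)$, but the direct group ring computation is shorter.)
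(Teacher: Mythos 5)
Your proof is correct and follows essentially the same route as the paper: the paper's one-line argument is exactly that $x \in S$ forces $x^{-1} \notin T$ because $st \neq e$ for all $s \in S$, $t \in T$ (equivalently, the coefficient of $e$ in $ST = \lambda(G-e)$ is zero), which is what you spell out in more detail. No issues.
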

\begin{proof}
If $x \in S$, then 
$x^{-1} \not\in T$. Hence, $S \cap T^{(-1)} = \emptyset$.
Equivalently, $S^{(-1)} \subseteq G \setminus T$.
\end{proof}

\begin{theorem}
\label{sum.thm}
Suppose $G$ is a group of order $n$. 
If there is an 
\NF[(s,t)]{G}{\lambda}, then $s + t \leq n$ and 
 \begin{equation}
\label{st.eq} \lambda \leq \frac{st}{s+t-1}.
\end{equation}
\end{theorem}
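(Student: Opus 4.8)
The plan is to exploit the relation $S^{(-1)} \subseteq G \setminus T$ from Lemma~\ref{Sinverse.lem} together with the defining group ring equation $ST = \lambda(G-e)$. First I would establish $s + t \leq n$: since $|S^{(-1)}| = |S| = s$ and $S^{(-1)}$ is disjoint from $T$ (which has size $t$), we get $s + t = |S^{(-1)}| + |T| \leq |G| = n$ immediately.

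For the inequality on $\lambda$, the idea is to count, in the group ring $\zed[G]$, the total multiplicity with which identity-free elements are covered, but weighted so as to detect the overlap between $T$ and $G \setminus S^{(-1)}$. Concretely, I would compute $ST$ and evaluate coefficients: the group ring equation $ST = \lambda(G - e)$ says the coefficient of $e$ in $ST$ is $0$ and the coefficient of every $g \neq e$ is exactly $\lambda$. The coefficient of $e$ in $ST$ equals $|\{(x,y) \in S \times T : xy = e\}| = |S \cap T^{(-1)}|$, which recovers the disjointness. To get the bound, I would instead look at a cleverly chosen element: apply the equation to a translate, or multiply $ST = \lambda(G-e)$ on the appropriate side by $S^{(-1)}$ (or $T^{(-1)}$) and compare coefficients. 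For instance, from $ST = \lambda(G-e)$ we get $S^{(-1)}ST = \lambda S^{(-1)}(G-e) = \lambda s G - \lambda S^{(-1)}$; reading off the coefficient of $e$ on the left gives $\sum_{g} (\text{coeff of }g\text{ in }S^{(-1)}S)(\text{coeff of }g^{-1}\text{ in }T)$, and since the coefficient of $e$ in $S^{(-1)}S$ is $s$ while all coefficients sum to $s^2$, one can bound this against $\lambda s - \lambda \cdot[\,e \in S^{(-1)}]$. The cleanest route, though, is probably a direct double-counting: count pairs $(x, g)$ with $x \in S$, $g \in G \setminus \{e\}$, and $g = xy$ for some $y \in T$; this is $st$ on one hand and $\lambda(n-1)$ on the other via \eqref{prod.eq}, so I should instead count with the constraint coming from $S^{(-1)} \cap T = \emptyset$.

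Here is the version I would actually write out. Consider the sum $\sigma = \sum_{g \in G} [ST]_g \cdot [S^{(-1)} + T]_g$ where $[A]_g$ denotes the coefficient of $g$. On one hand $S^{(-1)} + T$ has coefficient $1$ on a set of size $s + t$ (using disjointness) and $0$ elsewhere, and $[ST]_g = \lambda$ for $g \neq e$ and $= 0$ for $g = e$; since $e \notin S^{(-1)} \cup T$ is not guaranteed, I would note $e \in S^{(-1)}$ iff $e \in S$, and $[ST]_e = 0$ regardless, so $\sigma = \lambda \cdot |(S^{(-1)} \cup T) \setminus \{e\}| \geq \lambda(s + t - 1)$. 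On the other hand, $\sigma = [S^{(-1)}ST]_e + [TST^{(-1)}]_e$ or similar; evaluating $\sum_g [ST]_g [S^{(-1)}]_g = [S^{(-1)} \cdot (ST)^{(-1)}]_e$-type expressions should give exactly $st$ total (each product $xy$ with $x\in S, y \in T$ contributes at most... ) — I need to be careful that $\sigma \leq st$. Indeed $\sum_g [ST]_g [S^{(-1)}]_g = \sum_g [ST]_g [S]_{g^{-1}} = [S \cdot T \cdot S]_{?}$; rather, $\sum_{g}[ST]_g[S^{-1}]_g$ counts triples, and the honest bound is $\sum_g [ST]_g \cdot 1_{g \in S^{(-1)}} \le \sum_g [ST]_g = st$ minus the part landing in $T$'s complement. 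The main obstacle, and the step I expect to need the most care, is getting the direction of the second inequality right: showing $\sigma \le st$ cleanly, i.e. that the "demand" $ST$ restricted to the set $S^{(-1)} \cup T$ of size $s+t-1$ (off the identity) cannot exceed the total mass $st$ of $ST$. Since $ST$ has total mass exactly $st$ and nonnegative coefficients, $\sigma = \sum_{g \in S^{(-1)} \cup T,\ g \neq e} [ST]_g \le \sum_{g \in G} [ST]_g = st$ is in fact trivial once disjointness gives $|S^{(-1)} \cup T| = s+t$ and we drop at most one element for the identity. Combining $\lambda(s+t-1) \le \sigma \le st$ yields \eqref{st.eq}. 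I would double-check the boundary case $e \in S \cap \ldots$ to confirm we lose exactly one unit, and that $s+t-1 > 0$ so the division is valid.
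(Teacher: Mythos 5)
Your proposal is correct and follows essentially the paper's route: the first claim is exactly the paper's argument via Lemma~\ref{Sinverse.lem}, and your coefficient-sum bound $\lambda(s+t-1) \le \sigma \le st$ is just a repackaging of the paper's two ingredients, namely $s+t-1 \le n-1$ and $st = \lambda(n-1)$ from \eqref{prod.eq}. The paper obtains the second inequality in one line by multiplying $s+t-1 \le n-1$ by $\lambda$ and substituting $\lambda(n-1)=st$, so the weighted-sum machinery (and the exploratory detours preceding it) is more elaborate than needed, though sound.
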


\begin{proof}
Suppose $(S,T)$ is an 
\NF[(s,t)]{G}{\lambda}. We have
$ |S^{(-1)}| = |S| = s$.
Then, becuase $S^{(-1)} \subseteq G \setminus T$, we have $s \leq n - t$ and hence $s + t \leq n$.
To prove that (\ref{st.eq}) holds, we observe that
\begin{align*}
s + t \leq n &\Leftrightarrow s+t-1 \leq n-1\\
&\Leftrightarrow \lambda(s+t-1) \leq \lambda(n-1)\\
&\Leftrightarrow \lambda(s+t-1) \leq st\\
&\Leftrightarrow \lambda \leq \frac{st}{s+t-1},
\end{align*}
so (\ref{st.eq}) holds.
\end{proof}

\begin{corollary}
\label{upperbound.cor}
 If $\min \{s,t\} > 1$ and there exists an \NF[(s,t)]{G}{\lambda}, then $\lambda \leq \min \{s-1,t-1\}$.
\end{corollary}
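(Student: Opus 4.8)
The plan is to obtain this as a quick consequence of Theorem~\ref{sum.thm}, using the fact that $\lambda$ is a positive integer to upgrade the bound \eqref{st.eq} to the stated one. First I would apply Theorem~\ref{sum.thm} to get $\lambda \le \frac{st}{s+t-1}$. The crucial point is that, under the hypothesis $s > 1$, this fraction is \emph{strictly} smaller than $s$: clearing the positive denominator $s+t-1$, the inequality $\frac{st}{s+t-1} < s$ is equivalent to $st < s^2 + st - s$, i.e.\ to $s < s^2$, which holds exactly because $s > 1$. An identical computation, with the roles of $s$ and $t$ interchanged, shows $\frac{st}{s+t-1} < t$ whenever $t > 1$. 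Combining these, the hypothesis $\min\{s,t\} > 1$ yields $\lambda \le \frac{st}{s+t-1} < \min\{s,t\}$.

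To finish, I would invoke integrality: $\lambda$ is a positive integer and $\min\{s,t\}$ is an integer, so the strict inequality $\lambda < \min\{s,t\}$ forces $\lambda \le \min\{s,t\} - 1 = \min\{s-1,t-1\}$, which is the claim.

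I do not expect any genuine obstacle here; the content is essentially that equality can never hold in \eqref{st.eq} for an integer $\lambda$ once $\min\{s,t\} > 1$. The one place that rewards a moment's attention is the necessity of the hypothesis $\min\{s,t\} > 1$: if, say, $s = 1$, then $\frac{st}{s+t-1} = \frac{t}{t} = 1$, so \eqref{st.eq} gives only $\lambda \le 1$ (consistent with the trivial near-factorization, which has $\lambda = 1$), whereas the claimed bound $\lambda \le \min\{s-1,t-1\} = 0$ would be false. Thus the argument genuinely uses $s,t \ge 2$, and passing from the non-strict inequality of Theorem~\ref{sum.thm} to the strict inequality above is the mechanism that converts that theorem into Corollary~\ref{upperbound.cor}.
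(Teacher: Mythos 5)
Your proposal is correct and follows essentially the same route as the paper: apply the bound $\lambda \le \frac{st}{s+t-1}$ from Theorem~\ref{sum.thm}, observe that $\min\{s,t\}>1$ makes this fraction strictly less than each of $s$ and $t$, and then use integrality of $\lambda$ to conclude $\lambda \le \min\{s-1,t-1\}$. The only cosmetic difference is the algebraic form of the strict-inequality step (you clear denominators, the paper compares $s$ with $s+t-1$), which is immaterial.
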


\begin{proof}
We have  $\lambda \leq {st}/(s+t-1)$ from (\ref{st.eq}). Because $t > 1$, we have $s < s+t-1$ and hence
\[ \lambda \leq \frac{st}{s+t-1} < t.\] 
Therefore, $\lambda \leq t-1$. We also have $s > 1$, which implies $\lambda \leq s-1$ by a similar argument.
\end{proof}

\begin{theorem}
\label{upperbound.lem}
Suppose $G$ is a group of order $n$. 
If there is an 
\NF[(s,t)]{G}{\lambda}, 
then 
\[
\lambda \leq  
\left\lfloor \frac{s+t+1}{4} \right\rfloor. 
\]
\end{theorem}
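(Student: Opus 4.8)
The plan is to combine the two quantitative facts we already have: the bound $\lambda \le st/(s+t-1)$ from Theorem~\ref{sum.thm}, and the trivial observation that $\lambda \le \min\{s,t\}$ (or rather $\lambda \le s$ and $\lambda \le t$, noted right after equation~(\ref{prod.eq})). The first bound is strong when $s$ and $t$ are close to each other, but weak when one of them is much larger than the other; the second is strong in exactly the opposite regime. Writing $m = \min\{s,t\}$ and $M = \max\{s,t\}$, we have both $\lambda \le m$ and $\lambda \le mM/(m+M-1) \le mM/(2m-1)$ when $m\le M$. The idea is to show that the smaller of these two upper bounds is always at most $\lfloor (s+t+1)/4 \rfloor = \lfloor (m+M+1)/4 \rfloor$.

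First I would dispose of the degenerate cases $m=0$ and $m=1$: if $\min\{s,t\} \le 1$ then $\lambda \le 1$, and since $s+t \ge m+M \ge M \ge 3$ in any nontrivial situation (and one checks the tiny cases directly), $\lfloor (s+t+1)/4\rfloor \ge 1 \ge \lambda$. So assume $m \ge 2$. Now I would split on whether $M$ is large or small relative to $m$. If $M \le 3m - 1$, then $s + t - 1 = m + M - 1 \ge 2m$... wait, more usefully: from $\lambda \le st/(s+t-1)$ and the constraint $st = \lambda(n-1)$ I want to bound $\lambda$ by something symmetric. The cleanest route: from $\lambda(s+t-1) \le st$ we get $\lambda(s+t-1) \le st$, and I want to compare with $(s+t+1)^2/16$, i.e.\ show $\lambda \le (s+t+1)/4$ would follow if $4\lambda \le s+t+1$. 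Since $st \ge \lambda(s+t-1)$, by AM–GM $s+t \ge 2\sqrt{st} \ge 2\sqrt{\lambda(s+t-1)}$, so $(s+t)^2 \ge 4\lambda(s+t-1)$. Setting $u = s+t$, this reads $u^2 \ge 4\lambda(u-1)$, i.e.\ $u^2 - 4\lambda u + 4\lambda \ge 0$.

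The quadratic $u^2 - 4\lambda u + 4\lambda \ge 0$ holds iff $u \le 2\lambda - 2\sqrt{\lambda^2-\lambda}$ or $u \ge 2\lambda + 2\sqrt{\lambda^2-\lambda}$. The first branch forces $u$ small (it lies below $2$ for $\lambda \ge 1$, and since $u = s+t \ge 2m \ge 4 \ge 2$ this branch is excluded once $m \ge 2$, modulo checking the boundary carefully). Hence we are on the second branch: $s + t \ge 2\lambda + 2\sqrt{\lambda^2 - \lambda} > 2\lambda + 2(\lambda - 1) = 4\lambda - 2$, using $\sqrt{\lambda^2-\lambda} > \lambda - 1$ for $\lambda \ge 1$. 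Therefore $s+t > 4\lambda - 2$, i.e.\ $s+t \ge 4\lambda - 1$, i.e.\ $4\lambda \le s+t+1$, which gives $\lambda \le (s+t+1)/4$ and hence $\lambda \le \lfloor (s+t+1)/4\rfloor$ since $\lambda$ is an integer.

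I expect the main obstacle to be the case analysis at the low end: one must verify that we are genuinely on the "large $u$" branch of the quadratic rather than the spurious "small $u$" branch, and handle $m \in \{0,1\}$ and a few tiny parameter sets by hand so that the floor does not bite. The AM–GM step and the estimate $\sqrt{\lambda^2-\lambda} > \lambda-1$ are routine; the real care is in the boundary bookkeeping to land on $s+t \ge 4\lambda-1$ as an integer inequality, and in confirming that $s+t \ge 2\min\{s,t\} \ge 4$ whenever both factors exceed $1$ (the only interesting case by Corollary~\ref{upperbound.cor}).
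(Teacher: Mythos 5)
Your proof is correct and rests on the same foundation as the paper's: the bound $\lambda \le st/(s+t-1)$ from Theorem~\ref{sum.thm} together with the fact that, for fixed $s+t$, the product $st$ is at most $\left(\frac{s+t}{2}\right)^2$. Where you diverge is in the finishing. The paper splits on the parity of $r=s+t$, uses the exact integer maximum of $st$ (namely $\frac{r-1}{2}\cdot\frac{r+1}{2}$ when $r$ is odd), and does the floor bookkeeping in each case; you instead apply AM--GM uniformly, read the resulting inequality $u^2-4\lambda u+4\lambda\ge 0$ as a quadratic in $u=s+t$, rule out the small root (which is at most $2$), and convert the strict bound $u>4\lambda-2$ into $u\ge 4\lambda-1$ by integrality, which avoids the parity split entirely and lands on the same bound $\lambda\le\lfloor(s+t+1)/4\rfloor$. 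Two minor bookkeeping points: your inequality $\sqrt{\lambda^2-\lambda}>\lambda-1$ is actually an equality at $\lambda=1$, so for $\lambda=1$ you should simply note that $s+t\ge 4$ (your case $m\ge 2$) already forces $\lfloor(s+t+1)/4\rfloor\ge 1$, while the strict inequality is valid for all $\lambda\ge 2$; and the degenerate instance $s=t=\lambda=1$ (so $|G|=2$), where the stated bound fails, is implicitly excluded --- the paper's proof silently excludes it as well via its unjustified claim that $st\ge 2$. Neither point affects the substance of your argument.
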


\begin{proof} 
Denote $r = s+t$. 
First, suppose $r$ is even. From (\ref{st.eq}), we have
\begin{align*}
\lambda & \leq \frac{(\frac{r}{2})^2}{r-1}\\
&= \frac{r^2}{4(r-1)}\\
&= \frac{r}{4} + \frac{1}{4} + \frac{1}{4(r-1)}.
\end{align*}
We have $st \geq 2$, so $r > 2$. Then we have 
 \[ \frac{1}{4} + \frac{1}{4(r-1)} < \frac{1}{4} + \frac{1}{4} = \frac{1}{2},\]
so \[\lambda <  \frac{r}{4} + \frac{1}{2}.\]
But $r$ is an even integer, so 
\[\lambda \leq  \left\lfloor \frac{r}{4}\right\rfloor = \left\lfloor \frac{r+1}{4}\right\rfloor.\]

Now we suppose $r$ is odd. From (\ref{st.eq}), we have 
\begin{align*}
\lambda  &\leq \frac{\left( \frac{r-1}{2}\right)\left( \frac{r+1}{2}\right)}{r-1}\\
&= \frac{r^2-1}{4(r-1)}\\
&= \frac{r+1}{4}.
\end{align*}
Because $\lambda$ is an integer, we have 
\[ \lambda \leq \left\lfloor \frac{r+1}{4}\right\rfloor.\]
\end{proof}

We will prove later that the bound from Theorem \ref{upperbound.lem} is sometimes tight (see Corollary \ref{cor2.21}).

\medskip
Because $s + t \leq n$ (Theorem \ref{sum.thm}), we immediately obtain the following corollary that upperbounds $\lambda$ in terms of $n$.

\begin{corollary}
\label{nbound.cor}
Suppose $G$ is a group of order $n$. 
If there is an 
\NF[(s,t)]{G}{\lambda}, 
then \[
\lambda \leq  
\left\lfloor \frac{n+1}{4} \right\rfloor. \]
\end{corollary}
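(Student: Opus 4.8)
The plan is straightforward: this corollary is an immediate consequence of combining Theorem~\ref{upperbound.lem} with the inequality $s+t\le n$ from Theorem~\ref{sum.thm}. First I would invoke Theorem~\ref{sum.thm}, which guarantees that whenever an \NF[(s,t)]{G}{\lambda} exists in a group of order $n$, we have $s+t\le n$. Next I would apply Theorem~\ref{upperbound.lem}, which gives $\lambda \leq \left\lfloor \frac{s+t+1}{4}\right\rfloor$. Since the floor function is monotone nondecreasing, the bound $s+t\le n$ yields $\left\lfloor \frac{s+t+1}{4}\right\rfloor \leq \left\lfloor \frac{n+1}{4}\right\rfloor$, and chaining the two inequalities gives $\lambda \leq \left\lfloor \frac{n+1}{4}\right\rfloor$, as required.

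There is essentially no obstacle here; the only point requiring a word of care is the monotonicity step, namely that $a \le b$ (for integers $a,b$) implies $\lfloor (a+1)/4\rfloor \le \lfloor (b+1)/4\rfloor$. This is the general fact that $\lfloor \cdot \rfloor$ is order-preserving, applied to the linear function $x \mapsto (x+1)/4$, so it needs no separate argument. I would write the proof as a two-line chain of inequalities rather than spelling this out in detail.

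In full, the proof I would present is: Suppose there is an \NF[(s,t)]{G}{\lambda}. By Theorem~\ref{sum.thm}, $s+t\le n$. By Theorem~\ref{upperbound.lem},
\[
\lambda \leq \left\lfloor \frac{s+t+1}{4}\right\rfloor \leq \left\lfloor \frac{n+1}{4}\right\rfloor,
\]
where the last inequality holds because the floor function is nondecreasing and $s+t+1 \le n+1$. This completes the proof. Since both ingredient results are already established in the excerpt and are explicitly flagged as available, nothing further is needed.
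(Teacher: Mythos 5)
Your proposal is correct and matches the paper's argument exactly: the paper derives this corollary immediately by combining $s+t\le n$ from Theorem~\ref{sum.thm} with the bound $\lambda \le \left\lfloor \frac{s+t+1}{4}\right\rfloor$ of Theorem~\ref{upperbound.lem}, using monotonicity of the floor just as you do. No issues.
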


\medskip

%
%
%
Here are two further simple results that also follow from Theorem \ref{sum.thm}.

\begin{lemma}
\label{11.lem}
Suppose $G$ is a group of order $n$. Then there exists an \NF[(s,n-1)]{G}{\lambda} if and only if $\lambda = s = 1$.
\end{lemma}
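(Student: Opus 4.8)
The plan is to apply Theorem \ref{sum.thm} to force a very rigid parameter situation. First I would handle the ``if'' direction: if $\lambda = s = 1$, then $(S,T) = (\{e\}, G \setminus \{e\})$ is the trivial near-factorization, which witnesses an \NF[(1,n-1)]{G}{1}; the group ring identity $e(G-e) = G-e = \lambda(G-e)$ holds trivially. (More generally, $S = \{g\}$, $T = G \setminus \{g^{-1}\}$ works for any $g$, but the single witness suffices.)

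For the ``only if'' direction, suppose an \NF[(s,n-1)]{G}{\lambda} exists, so $t = n-1$. By Theorem \ref{sum.thm} we have $s + t \leq n$, i.e. $s + (n-1) \leq n$, which forces $s \leq 1$, hence $s = 1$. Now invoke the product equation (\ref{prod.eq}): $st = \lambda(n-1)$, which becomes $1 \cdot (n-1) = \lambda(n-1)$. Since $n \geq 2$ (a near-factorization of the trivial group is vacuous/impossible — one should note $G - e$ is nonempty, so $n \geq 2$), we can cancel $n-1 > 0$ to conclude $\lambda = 1$. Thus $\lambda = s = 1$, as claimed.

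There is essentially no obstacle here; the only point requiring a moment's care is confirming $n \geq 2$ so that the cancellation of $n-1$ is legitimate — this is immediate since $ST = \lambda(G-e)$ with $\lambda \geq 1$ forces $G - e \neq 0$, i.e. $|G| \geq 2$. One could alternatively derive $\lambda = 1$ directly from Corollary \ref{upperbound.cor} or from the bound (\ref{st.eq}): with $s = 1$, (\ref{st.eq}) gives $\lambda \leq t/(t) = 1$, so $\lambda = 1$; but the cancellation argument is cleaner and self-contained.
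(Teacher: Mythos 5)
Your proof is correct and follows the same route as the paper: apply Theorem \ref{sum.thm} to force $s=1$, use equation (\ref{prod.eq}) to deduce $\lambda=1$, and exhibit the trivial $(\{e\}, G\setminus\{e\})$ pair for the converse. The extra remark confirming $n\geq 2$ before cancelling $n-1$ is a harmless refinement of the same argument.
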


\begin{proof}
Suppose $(S,T)$ is an \NF[(s,n-1)]{G}{\lambda}.
From Theorem \ref{sum.thm}, we have $s = 1$. We also have $\lambda (n-1) = 1 \times (n-1)$ from (\ref{prod.eq}), so $\lambda = 1$.
Finally, we observe that a \NF[(1,n-1)]{G}{1} always exists by taking $S = e$ and $T = G - e$.   
\end{proof}

\begin{lemma}
\label{n-1prime.lem}
Suppose $G$ is a group of order $n$, where $n-1$ is prime. Then there exists an \NF[(s,t)]{G}{\lambda} if and only if $\lambda = 1$ and
$\{s,t\} = \{1,n-1\}$.
\end{lemma}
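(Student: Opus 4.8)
The plan is to derive a contradiction from the existence of a nontrivial \NF[(s,t)]{G}{\lambda} using the product equation \eqref{prod.eq} together with the upper bound from Corollary \ref{upperbound.cor}. By Lemma \ref{11.lem} (applied to both coordinates, using the symmetry of Lemma 1.7), any \NF[(s,t)]{G}{\lambda} with $\{s,t\} \cap \{1, n-1\} \neq \emptyset$ must have $\lambda = 1$ and $\{s,t\} = \{1,n-1\}$, which is exactly the conclusion we want. So it suffices to rule out the case $1 < s \leq t < n-1$.

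First I would assume for contradiction that $(S,T)$ is an \NF[(s,t)]{G}{\lambda} with $\min\{s,t\} > 1$. From \eqref{prod.eq} we have $st = \lambda(n-1)$. Since $n-1$ is prime and $st = \lambda(n-1)$, the prime $n-1$ divides $st$, hence $n-1$ divides $s$ or $n-1$ divides $t$. Say $n-1 \mid t$ (the other case is symmetric). Combined with $t \leq n - s \leq n - 2 < n - 1$ from Theorem \ref{sum.thm} (which gives $s + t \leq n$, so $t \leq n - s \leq n-2$ since $s \geq 2$), the only positive multiple of $n-1$ that is at most $n-2$ is... none — so $t$ cannot be a positive multiple of $n-1$ unless $t = 0$, which is absurd. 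This already yields the contradiction, so no nontrivial near-factorization exists.

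The remaining, easier direction is to verify that $\lambda = 1$, $\{s,t\} = \{1,n-1\}$ does give a near-factorization: this is the trivial \NF[(1,n-1)]{G}{1} with $S = \{e\}$, $T = G - e$, already noted in the text, together with Lemma 1.7 for the reversed parameter pair $(n-1,1)$.

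I do not expect any serious obstacle here; the only point requiring mild care is handling the two symmetric cases ($n-1 \mid s$ versus $n-1 \mid t$) and making sure the bound $s + t \leq n$ together with $s, t \geq 2$ is invoked correctly to exclude $t$ (or $s$) from being a positive multiple of the prime $n-1$. One should also note the degenerate possibility $n = 2$ (so $n - 1 = 1$ is not prime) is excluded by hypothesis, and small cases like $n = 3$ ($n-1 = 2$ prime) are covered since then $s, t \geq 2$ forces $st \geq 4 > 2\lambda = \lambda(n-1)$ only for $\lambda = 1$, but $s+t \leq 3$ is then impossible with $s,t \geq 2$ — consistent with the general argument.
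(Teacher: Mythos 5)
Your proposal is correct and is essentially the paper's own argument: since $n-1$ is prime and divides $st=\lambda(n-1)$, it divides $s$ or $t$, forcing that factor to be at least $n-1$, which collides with $s+t\le n$ from Theorem \ref{sum.thm} unless the other factor is $1$ and $\lambda=1$. The paper runs this divisibility argument uniformly without any case split, so your preliminary reduction via Lemma \ref{11.lem} is unnecessary (and note that the boundary subcase $s=1$ is not literally covered by that lemma, though it follows at once from $t=\lambda(n-1)\le n-1$).
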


\begin{proof}
We have $st = \lambda (n-1)$ from (\ref{prod.eq}). Because $n-1$ is prime, one of $s$ or $t$ is divisible by $n-1$. Without loss of generality, suppose $s$ is divisible by $n-1$. Then $s \geq n-1$. Because $s + t \leq n$ (Theorem \ref{sum.thm}), we must have $s = n-1$ and $t =1$, so $\lambda = 1$ from (\ref{prod.eq}).
\end{proof}

Our next theorem sometimes improves the bound on $\lambda$ as a function of $n$ given in Corollary \ref{nbound.cor}, depending on how $n$ can be factored.

\begin{theorem}
\label{n1n2.thm}
Suppose $G$ is a group of order $n$, where $n-1 = n_1n_2$ for positive integers $n_1$ and $n_2$.
Suppose there exists an \NF[(s,t)]{G}{\lambda} with $\min \{ s,t\} > 1$, $s = \lambda_1 n_1$, $t = \lambda_2 n_2$ and
$\lambda = \lambda_1\lambda_2$. 
Then
\begin{equation}
\label{nbound.eq}
\lambda \leq 
\begin{cases}
\frac{n+1}{4} & \text{if $n_1$ and $n_2$ are both even}\\
\frac{(n_1+1)(n - n_2 + 1)}{4n_1} & \text{if $n_1$ is odd and  $n_2$ is even}\\
\frac{(n_2+1)(n - n_1 + 1)}{4n_2} & \text{if $n_1$ is even and  $n_2$ is odd}\\
\min \left\{ \frac{(n_1+1)(n - n_2 + 1)}{4n_1}, \frac{(n_2+1)(n - n_1 + 1)}{4n_2} \right\} & \text{if $n_1$ and $n_2$ are both odd.}
\end{cases}
\end{equation}
\end{theorem}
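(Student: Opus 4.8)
The plan is to deduce everything from the single inequality $s+t\le n$ of Theorem~\ref{sum.thm}, together with the structural hypotheses $s=\lambda_1 n_1$, $t=\lambda_2 n_2$, $\lambda=\lambda_1\lambda_2$, by reducing to a one-variable integer optimization. First, $\min\{s,t\}>1$ forces $n_1,n_2\ge 2$: if $n_1=1$ then $s=\lambda_1\ge 2$, so $s+t\le n=n_2+1$ gives $\lambda_2 n_2\le n_2-1$, impossible for a positive integer $\lambda_2$; the case $n_2=1$ is symmetric. Assuming $n_1,n_2\ge 2$, rewrite $s+t\le n$ as $\lambda_1 n_1+\lambda_2 n_2\le n_1 n_2+1=n$. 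Isolating $\lambda_1$ and multiplying by $\lambda_2>0$ gives
\[
 \lambda=\lambda_1\lambda_2\ \le\ \frac{\lambda_2(n-n_2\lambda_2)}{n_1}\ =:\ g(\lambda_2),
\]
and symmetrically $\lambda\le h(\lambda_1):=\lambda_1(n-n_1\lambda_1)/n_2$. Now $g$ is a concave quadratic in the real variable $x$ with vertex $x^\ast=n/(2n_2)=n_1/2+1/(2n_2)$ and maximum value $g(x^\ast)=n^2/(4(n-1))$, so ignoring integrality we recover only a mild weakening of Corollary~\ref{nbound.cor}. The gain comes from the fact that $\lambda_2$ is a positive integer: since $g$ is concave, its maximum over the positive integers is attained at $\lfloor x^\ast\rfloor$ or $\lceil x^\ast\rceil$.

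The heart of the proof is then the elementary but fiddly integrality analysis, which splits on the parities of $n_1$ and $n_2$. If $n_1$ is odd, then $n_1/2$ is a half-integer and, since $n_2\ge 2$, we have $0<1/(2n_2)\le 1/4$; hence $x^\ast$ lies strictly between $(n_1-1)/2$ and $(n_1+1)/2$, strictly in the upper half of that interval, so by concavity the integer maximum of $g$ is $g((n_1+1)/2)$, which simplifies (using $n=n_1n_2+1$) to $(n_1+1)(n-n_2+1)/(4n_1)$. Thus $\lambda\le (n_1+1)(n-n_2+1)/(4n_1)$ whenever $n_1$ is odd; by the same computation applied to $h$, $\lambda\le (n_2+1)(n-n_1+1)/(4n_2)$ whenever $n_2$ is odd. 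If $n_1$ is even, then $n_1/2$ is already an integer and $x^\ast$ rounds down to $n_1/2$ (again because $1/(2n_2)\le 1/4<1/2$), so the integer maximum of $g$ is $g(n_1/2)=(n+1)/4$, giving $\lambda\le (n+1)/4$.

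It remains to assemble the four cases of~\eqref{nbound.eq}. If $n_1$ and $n_2$ are both even, use the last bound $\lambda\le(n+1)/4$. If $n_1$ is odd and $n_2$ even, use $\lambda\le (n_1+1)(n-n_2+1)/(4n_1)$; if $n_1$ even and $n_2$ odd, use $\lambda\le (n_2+1)(n-n_1+1)/(4n_2)$. If $n_1$ and $n_2$ are both odd, both of these bounds are available, so $\lambda$ is at most their minimum. The one genuinely delicate point — the part I would write out most carefully — is verifying, in each parity case, precisely which of $\lfloor x^\ast\rfloor$ and $\lceil x^\ast\rceil$ realizes the concave maximum of $g$ (and likewise $h$) over the positive integers; this rests on the elementary inequalities $0<1/(2n_2)\le 1/4<1/2$ and their analogues with $n_1$. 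All remaining steps are routine algebraic simplification using $n=n_1n_2+1$.
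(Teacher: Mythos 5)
Your proof is correct and follows essentially the same route as the paper: bound $\lambda\le \lambda_2(n-\lambda_2 n_2)/n_1$ from $s+t\le n$, maximize the concave quadratic over positive integers with a parity split on $n_1$ (and symmetrically $n_2$), then combine the cases. Your added checks (that $\min\{s,t\}>1$ forces $n_1,n_2\ge 2$, and the explicit rounding analysis using $1/(2n_2)\le 1/4$) only make precise what the paper states as "the nearest integer" without elaboration.
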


\begin{proof}


We  have
\[ s+t = \lambda_1n_1 + \lambda_2n_2 \leq n,\]
so \[ \lambda_1 \leq \frac{n - \lambda_2n_2}{n_1}.
\]
Therefore,
\[ \lambda = \lambda_1 \lambda_2 \leq  \lambda_2 \left( \frac{n - \lambda_2n_2}{n_1}\right).\]

Define $f(x) = x (n - x n_2)/n_1$. We have $f'(x) = (n - 2x n_2)/n_1$, so $f'(x) = 0$ when 
$x = n/(2n_2)$. Note that
\[ \frac{n}{2n_2} = \frac{n_1n_2+1}{2n_2} = \frac{n_1}{2} + \frac{1}{2n_2}.\]
Because $x$ must be an integer, we see that $f(x)$ is maximized when $x$ is the nearest integer to $\frac{n_1}{2} + \frac{1}{2n_2}$; denote this integer by $x_0$.

If $n_1$ is even, then $x_0 = n_1 / 2$ and
\[f(x_0) = \left(\frac{n_1}{2}\right) \frac{\left(n - \left(\frac{n_1}{2}\right) n_2\right)}{n_1} = \frac{n+1}{4}.\]

If $n_1$ is odd, then $x_0 = (n_1+1) / 2$ and
\[f(x_0) = \left(\frac{n_1+1}{2}\right) \frac{\left(n - \left(\frac{n_1+1}{2}\right) n_2\right)}{n_1} = 
\frac{(n_1+1)(n - n_2+1)}{4n_1}.\]
Hence,
\begin{equation}
\label{n_1bound.eq}
\lambda \leq 
\begin{cases}
\frac{n+1}{4} & \text{if $n_1$ is even}\\
\frac{(n_1+1)(n - n_2 + 1)}{4n_1} & \text{if $n_1$ is odd.}
\end{cases}
\end{equation}
Interchanging the roles of $n_1$ and $n_2$, we have
\begin{equation}
\label{n_2bound.eq}
\lambda \leq 
\begin{cases}
\frac{n+1}{4} & \text{if $n_2$ is even}\\
\frac{(n_2+1)(n - n_1 + 1)}{4n_2} & \text{if $n_2$ is odd.}
\end{cases}
\end{equation}
Combining (\ref{n_1bound.eq}) and (\ref{n_2bound.eq}), we obtain (\ref{nbound.eq}).
\end{proof}

As an example to illustrate the application of  Theorem \ref{n1n2.thm}, we show that the general upper bound on $\lambda$ from Corollary \ref{nbound.cor} (which is roughly $n/4$) can be improved to roughly $2n/9$ when $n \equiv 4 \bmod 6$.

\begin{corollary}
\label{3p.lem}
Suppose $G$ is a group of order $n$, where $n \equiv 4 \bmod 6$. Then there exists an \NF[(s,t)]{G}{\lambda} only if $\lambda \leq (2n+4)/9$.
\end{corollary}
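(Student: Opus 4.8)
The plan is to feed the factorization $n-1 = 3m$, with $m := (n-1)/3$, into Theorem~\ref{n1n2.thm}.

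First I would record the arithmetic hidden in the hypothesis $n \equiv 4 \pmod 6$: it says that $n$ is even and $n \equiv 1 \pmod 3$, so $n-1$ is an odd multiple of $3$. Hence $m = (n-1)/3$ is an odd positive integer, and we have a factorization $n-1 = n_1 n_2$ with $n_1 := 3$ and $n_2 := m$, \emph{both of which are odd}. I would also dispose of the degenerate case $\min\{s,t\}=1$ at once: if, say, $s=1$, then $t = st = \lambda(n-1)$, while $s+t\le n$ (Theorem~\ref{sum.thm}) forces $t \le n-1$, so $\lambda(n-1)\le n-1$ and $\lambda = 1$; since $n\ge 4$ we have $1 \le (2n+4)/9$, so the bound holds. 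From now on assume $\min\{s,t\} > 1$, which is the side hypothesis Theorem~\ref{n1n2.thm} requires.

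The substantive step is to check that the given \NF[(s,t)]{G}{\lambda} is compatible with this particular factorization, i.e.\ that there are positive integers $\lambda_1,\lambda_2$ with $s = 3\lambda_1$, $t = m\lambda_2$ and $\lambda = \lambda_1\lambda_2$ — after possibly swapping $S$ and $T$, which is legitimate since an \NF[(s,t)]{G}{\lambda} exists iff an \NF[(t,s)]{G}{\lambda} does. Since $3 \mid (n-1) \mid st = \lambda(n-1)$ and $3$ is prime, we may assume $3 \mid s$ and set $\lambda_1 = s/3$; then $\lambda = \lambda_1\lambda_2$ would follow automatically from $st = 3\lambda m$ once we know $m \mid t$. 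So everything hinges on proving $m \mid t$ given $3 \mid s$. From $\lambda_1 t = \lambda m$ one only gets $m \mid \lambda_1 t$, and the work is to rule out configurations in which a nontrivial common factor of $m$ and $\lambda_1$ absorbs part of $m$ into $\lambda_1$ rather than into $t$. I expect this to be the main obstacle, and to require bringing in the size bound $s+t\le n$ (Theorem~\ref{sum.thm}) together with $\lambda \le \min\{s-1,t-1\}$ (Corollary~\ref{upperbound.cor}) to eliminate the "mixed" cases — or else an appeal to a structural lemma on near-factorizations.

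Granting the product structure, the conclusion is immediate. With $n_1 = 3$ and $n_2 = m$ both odd we are in the last case of \eqref{nbound.eq}, so in particular $\lambda$ is bounded by the first term of the minimum:
\[
\lambda \le \frac{(n_1+1)(n - n_2 + 1)}{4n_1} = \frac{4\,(n - m + 1)}{12} = \frac{n - m + 1}{3}.
\]
Substituting $m = (n-1)/3$ gives $n - m + 1 = \bigl(3n - (n-1) + 3\bigr)/3 = (2n+4)/3$, whence $\lambda \le (2n+4)/9$, as claimed. The arithmetic in this last step, together with the trivial-case and swap reductions, is routine; the only place where genuine work is needed is the compatibility check of the previous paragraph.
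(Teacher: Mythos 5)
Your main line is exactly the paper's proof: the paper proves Corollary~\ref{3p.lem} with the single sentence ``Apply Theorem~\ref{n1n2.thm} with $n_1=3$ and $n_2=(n-1)/3$,'' and your observations that $n\equiv 4 \bmod 6$ makes both $3$ and $m=(n-1)/3$ odd, that one lands in the last case of~(\ref{nbound.eq}), and that $\frac{(n_1+1)(n-n_2+1)}{4n_1}=\frac{n-m+1}{3}=\frac{2n+4}{9}$ are precisely the intended (and correct) computation; the trivial-case and swap remarks are harmless additions. Where you diverge is the ``compatibility check'' you flag and leave open. Note that this check is not part of the paper's proof at all: the hypotheses $s=\lambda_1 n_1$, $t=\lambda_2 n_2$, $\lambda=\lambda_1\lambda_2$ belong to the statement of Theorem~\ref{n1n2.thm}, and the corollary is obtained simply by invoking that theorem, i.e.\ with those hypotheses understood. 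So relative to the paper you have not omitted a step; you have noticed one that the paper's one-line proof does not address.

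That said, the route you sketch for closing it cannot work, so as a proof of the unconditional statement your proposal is incomplete at exactly the point you suspected. You correctly reduce the hypotheses to two divisibilities (given $st=\lambda(n-1)$, the relation $\lambda=\lambda_1\lambda_2$ is automatic once $3\mid s$ and $m\mid t$), but $m\mid t$ is not a consequence of $3\mid s$, $st=\lambda(n-1)$, $s+t\le n$ (Theorem~\ref{sum.thm}) and $\lambda\le\min\{s-1,t-1\}$ (Corollary~\ref{upperbound.cor}). For instance, take $n=76\equiv 4 \bmod 6$, so $m=25$, and consider $s=30$, $t=45$, $\lambda=18$: then $st=1350=18\cdot 75$, $s+t=75\le 76$, $\lambda\le\lfloor(s+t+1)/4\rfloor=19$, and $3\mid s$, yet $25$ divides neither $30$ nor $45$ (in either assignment of the roles), so no decomposition $s=\lambda_1 n_1$, $t=\lambda_2 n_2$ exists; moreover $\lambda=18>(2\cdot 76+4)/9$. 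Hence no purely arithmetic argument from the size bounds can force the product structure, and ruling out such parameter sets would require genuinely new information about near-factorizations --- which neither your sketch nor the paper's citation of Theorem~\ref{n1n2.thm} supplies. In short: your computation reproduces the paper's; the step you identify as the crux is a real one if the corollary is read unconditionally, but the tools you propose for it are provably insufficient, so the proposal (unlike the paper's conditional reading) does not constitute a complete proof.
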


\begin{proof}Apply Theorem \ref{n1n2.thm} with $n_1 = 3$ and  $n_2 = (n-1)/3$.
\end{proof}

\section{Constructions of near-factorizations}

In this section, we 
discuss how certain difference sets and partial difference sets give rise to near-factorizations and we describe a ``product''construction for NFs.

\subsection{Constructions from difference sets and partial difference sets}
\label{DSandPDS.sec}

We now describe a construction for near-factorizations from certain difference sets. 
First, we define a \emph{$(v,k,\lambda)$-difference set} (or for short, \emph{$(v,k,\lambda)$-DS}) in a group $G$ of order $v$ to be a subset $D \subseteq G$ such that $|D| = k$ and the following group ring equation is satisfied:
\begin{equation}
\label{DS.eq}
 D D^{(-1)} = k \cdot e + \lambda (G - e),
\end{equation}
where $e$ is the identity element in $G$.
Note that a necessary condition for existence of a $(v,k,\lambda)$-DS is that $\lambda(v-1) = k(k-1)$. 

In any group $G$ of order $v$, there always exists a trivial $(v,1,0)$-DS in $G$ (namely, $\{e\}$) and a trivial $(v,v-1,v-2)$-DS  in $G$ (namely, $G \setminus \{e\}$). A $(v,k,\lambda)$-DS in $G$ is \emph{nontrivial} if with $1 < k < v-1$. We note that if $D$ is a $(v,k,\lambda)$-DS in $G$, then so is $D^{(-1)}=\{ d^{-1} : d \in D \}$. Moreover, $G \setminus D$ is called the \emph{complement} of $D$ in $G$; it is easy to see that $G \setminus D$ is a $(v,v-k,v-2k+\lambda)$-DS in $G$. For a comprehensive treatment of difference sets, please refer to \cite[Chapter VI]{BJL} and \cite{JPS}.

The term ``difference set'' implicitly assumes that the group $G$ is written additively. In this situation, if we compute all the differences of distinct elements in $D$, we get every nonzero group element exactly $\lambda$ times. 
Nevertheless, we will use multiplicative notation in most of our theorems. 

\begin{theorem}
\label{differencesets.thm}
Suppose there is a $(v,k,\lambda)$-DS $D$ in a group $G$ of order $v$.
Then $(D, G - D^{(-1)})$ is a  $\NF[(k,v-k)]{G}{k-\lambda}$.
\end{theorem}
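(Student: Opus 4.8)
The plan is to verify the defining group ring equation $D\,(G - D^{(-1)}) = (k-\lambda)(G - e)$ directly, using the difference set identity (\ref{DS.eq}) together with the standard fact that $DG = |D|\,G = kG$ for any subset $D \subseteq G$ of size $k$ (since multiplying any fixed element of $G$ on the right by all of $G$ permutes $G$). The size conditions are immediate: $|D| = k$ by hypothesis, and $|G - D^{(-1)}| = v - k$ since $|D^{(-1)}| = |D| = k$. So the whole content is the group ring computation, and the parameter $\lambda' = k - \lambda$ will fall out of it.

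Concretely, I would expand
\begin{align*}
D\,(G - D^{(-1)}) &= DG - D D^{(-1)} \\
&= kG - \big(k\cdot e + \lambda(G - e)\big) \\
&= kG - k\cdot e - \lambda G + \lambda e \\
&= (k - \lambda) G - (k - \lambda) e \\
&= (k-\lambda)(G - e),
\end{align*}
where the second line uses $DG = kG$ and (\ref{DS.eq}). This shows $(D, G - D^{(-1)})$ satisfies the defining equation of an \NF{G}{k-\lambda}, and combined with the cardinalities it is a \NF[(k,v-k)]{G}{k-\lambda}, as claimed.

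One should also check that this is a genuine near-factorization in the sense that $st = \lambda'(|G|-1)$ is consistent, i.e.\ $k(v-k) = (k-\lambda)(v-1)$; this is equivalent to $\lambda(v-1) = k(k-1)$, which is exactly the known necessary condition for a $(v,k,\lambda)$-DS to exist, so it holds automatically. (Alternatively, since the group ring equation already forces it, no separate verification is needed.)

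There is no real obstacle here — the only thing to be careful about is that the identity $DG = kG$ holds for \emph{any} finite group, abelian or not, and that $D^{(-1)}G = kG$ as well, so the argument never needs commutativity; writing $G$ multiplicatively throughout (as the paper does in its theorem statements) keeps this transparent. The one point worth a sentence of comment is that $G - D^{(-1)}$ is indeed a subset of $G$ (all coefficients $0$ or $1$), which is clear since $D^{(-1)} \subseteq G$, so the pair qualifies as a near-factorization in the required sense where both factors are honest subsets.
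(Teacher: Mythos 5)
Your proposal is correct and follows essentially the same route as the paper's proof: expand $D(G - D^{(-1)}) = DG - DD^{(-1)}$, use $DG = kG$ and the difference set equation to obtain $(k-\lambda)(G-e)$. The extra remarks on cardinalities and the parameter identity are fine but not needed beyond what the paper records.
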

\begin{proof}
Let $D$ be a $(v,k,\lambda)$-DS in the group $G$. We perform computations in the group ring
$\zed[G]$. Define group ring elements $S = D$ and $T = G - D^{(-1)}$.  
We will compute the product $ST$. First, it is clear that 
\[ SG = kG.\]
Next, 
\[ S D^{(-1)} = D  D^{(-1)} = k \cdot e  + \lambda (G - e).\]
Hence,
\begin{align*}
 S T &= S (G - D^{(-1)})\\
 &= kG - (k \cdot e  + \lambda (G - e))\\ 
&= (k - \lambda )(G - e) .
\end{align*}
Therefore, $(S,T)$ is the desired near-factorization.
\end{proof}

\begin{example}
{\rm $D = \{1,3,4,5,9\}$ is an $(11,5,2)$-DS in $(\zed_{11},+)$. We are working in an additive group, so
\[D^{(-1)} = \{-1,-3,-4,-5,-9\} = \{2,6,7,8,10\}\] and  
\[ T = \zed_{11} \setminus D^{(-1)} = \{0,1,3,4,5,9\}.\] From Theorem \ref{differencesets.thm}, we see that
$D$ and $T$ yield a \NF[(5,6)]{\zed_{11}}{3}.}$\hfill\blacksquare$ 
\end{example}

We can also prove a converse result to Theorem \ref{differencesets.thm}.

\begin{theorem}
\label{DSconverse.thm}
Suppose $(S,T)$ is an $\NF[(s,t)]{G}{\lambda}$, where $|G| = s + t$.
Then $S$ is an $(s+t,s,s-\lambda)$-DS in $G$ and 
$T = G - S^{(-1)}$  is an $(s+t,t,t -\lambda)$-DS in $G$.
\end{theorem}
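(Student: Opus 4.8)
The plan is to run the computation from Theorem \ref{differencesets.thm} in reverse, working in the group ring $\zed[G]$. We start from the hypothesis $ST = \lambda(G-e)$ with $|G| = s+t$, and we want to recover a difference-set equation $S S^{(-1)} = s \cdot e + (s-\lambda)(G-e)$. First I would establish the crucial identity that $T = G - S^{(-1)}$. This follows from Theorem \ref{T3.thm} (uniqueness of $\lambda$-mates): by Lemma \ref{Sinverse.lem} we know $S^{(-1)} \subseteq G \setminus T$, and since $|G| = s + t$ we get $|G \setminus T| = s = |S^{(-1)}|$, so in fact $S^{(-1)} = G \setminus T$, i.e.\ $T = G - S^{(-1)}$ as group ring elements. (Alternatively, one checks directly that $(S, G - S^{(-1)})$ is a $\lambda$-fold near-factorization and invokes uniqueness, but the counting argument is cleaner since it is already packaged in Lemma \ref{Sinverse.lem}.)

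Next I would substitute $T = G - S^{(-1)}$ into $ST = \lambda(G-e)$ and solve for $S S^{(-1)}$. Using $SG = sG$ (since $|S| = s$), we have
\[
\lambda(G - e) = ST = S(G - S^{(-1)}) = sG - S S^{(-1)},
\]
hence $S S^{(-1)} = sG - \lambda(G-e) = s\cdot e + (s-\lambda)G - (s-\lambda)e = s\cdot e + (s-\lambda)(G - e)$. This is exactly equation~(\ref{DS.eq}) with $k = s$ and difference-set parameter $s - \lambda$, so $S$ is an $(s+t, s, s-\lambda)$-DS in $G$. For the statement about $T$, I would apply the symmetric fact that $(T^{(-1)}, S^{(-1)})$ is an $\NF[(t,s)]{G}{\lambda}$ (Lemma after Theorem \ref{equiv.thm}, the $S \leftrightarrow T^{(-1)}$ symmetry), and rerun the same argument with the roles of $S$ and $T$ swapped; this yields $T T^{(-1)} = t \cdot e + (t-\lambda)(G-e)$, so $T$ is an $(s+t, t, t-\lambda)$-DS. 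One should double-check the arithmetic consistency: a $(v,k,\mu)$-DS requires $\mu(v-1) = k(k-1)$, and here $(s-\lambda)(s+t-1) = s(s-1)$ should follow from $st = \lambda(s+t-1)$ (equation~(\ref{prod.eq})), which is a quick algebraic identity.

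I do not anticipate a serious obstacle here — the theorem is essentially Theorem \ref{differencesets.thm} read backwards, and every ingredient (uniqueness of mates, $SG = sG$, the $S \leftrightarrow T^{(-1)}$ symmetry, Lemma \ref{Sinverse.lem}) is already available in the excerpt. The only point requiring a little care is justifying $T = G - S^{(-1)}$ rigorously: the containment direction is Lemma \ref{Sinverse.lem}, and the reverse containment is forced purely by cardinality once we use the hypothesis $|G| = s+t$. If one wanted to avoid the counting step entirely, one could instead verify $S\,(G - S^{(-1)}) = \lambda(G-e)$ directly (which needs $S S^{(-1)} = s\cdot e + (s-\lambda)(G-e)$ — circular here) so the counting route is preferable. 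The whole proof is short.
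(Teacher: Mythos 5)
Your argument is correct, and the first half is essentially the paper's own proof: the paper likewise asserts $S^{(-1)} = G - T$ (implicitly by the same counting via Lemma \ref{Sinverse.lem}) and then computes $SS^{(-1)} = S(G-T) = sG - \lambda(G-e) = s\cdot e + (s-\lambda)(G-e)$, exactly your substitution. Where you diverge is the second claim: the paper deduces that $S^{(-1)}$ is an $(s+t,s,s-\lambda)$-DS and then invokes the complementation fact ($G \setminus D$ of a $(v,k,\lambda')$-DS is a $(v,v-k,v-2k+\lambda')$-DS) to conclude that $T = G \setminus S^{(-1)}$ is an $(s+t,t,t-\lambda)$-DS, whereas you rerun the first argument on the swapped near-factorization $(T^{(-1)},S^{(-1)})$. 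Both routes work, but note one small imprecision in yours: applying the first part to $(T^{(-1)},S^{(-1)})$ literally yields $T^{(-1)}T = t\cdot e + (t-\lambda)(G-e)$, not $TT^{(-1)}$ as you wrote; since $G$ is not assumed abelian, these are a priori different group ring elements, and to pass from ``$T^{(-1)}$ is a DS'' to ``$T$ is a DS'' you should cite the fact, noted in the paper, that $D^{(-1)}$ is a $(v,k,\lambda')$-DS whenever $D$ is. With that one citation added, your proof is complete; the paper's complementation route avoids the issue only because it uses the same noted facts in a different order. Your concluding parameter check $(s-\lambda)(s+t-1)=s(s-1)$ is a harmless sanity check not needed for the proof.
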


\begin{proof}
We again work in the group ring $\zed[G]$. 
We have $S^{(-1)} = G - T$, so
\begin{align*} S S^{(-1)} &= S(G - T)\\ 
&= sG - ST 
\\&= sG - \lambda (G - e) 
\\&= s(G - e) + s\cdot e - \lambda (G - e) \\&= (s-\lambda)(G-e) + s \cdot e.\end{align*}
Hence, $S$ is an  $(s+t,s,s-\lambda)$-DS in $G$.

Therefore, $S^{(-1)}$ is an $(s+t,s,s-\lambda)$-DS in $G$. As $T = G \setminus S^{(-1)}$ is the complement of $S^{(-1)}$, 
$T$ is an $(s+t,t,t-\lambda)$-DS in $G$.
\end{proof}

Combining Theorems~\ref{differencesets.thm} and~\ref{DSconverse.thm}, we have the following characterization of $\NF[(s,t)]{G}{\lambda}$ with $|G|=s+t$.
\begin{corollary}
\label{DScharacterization.cor}
Let $G$ be a finite group. Then $(S,T)$ is an $\NF[(s,t)]{G}{\lambda}$ with $|G|=s+t$ if and only if $S$ is an $(s+t,s,s-\lambda)$-DS in $G$. 
\end{corollary}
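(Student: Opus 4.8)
The plan is to prove Corollary~\ref{DScharacterization.cor} by showing that the two implications follow almost immediately from the two theorems just established, with a small amount of bookkeeping to reconcile the statements. The forward direction is exactly Theorem~\ref{DSconverse.thm}: if $(S,T)$ is an $\NF[(s,t)]{G}{\lambda}$ with $|G| = s+t$, then that theorem already asserts $S$ is an $(s+t,s,s-\lambda)$-DS in $G$ (and moreover $T = G - S^{(-1)}$). So this half requires no new argument — I would simply cite Theorem~\ref{DSconverse.thm}.

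For the reverse direction, suppose $S$ is an $(s+t,s,s-\lambda)$-DS in $G$, where I write $v = s+t$, $k = s$, and $\mu = s - \lambda$ for the difference set parameters. I would first record that the complement-type construction of Theorem~\ref{differencesets.thm} applies: with the difference set $D := S$ having $|D| = k = s$ and parameter $\mu = s-\lambda$, Theorem~\ref{differencesets.thm} produces a $\NF[(k, v-k)]{G}{k - \mu} = \NF[(s, (s+t)-s)]{G}{s - (s-\lambda)} = \NF[(s,t)]{G}{\lambda}$, namely the pair $(S,\, G - S^{(-1)})$. This is a valid $\NF[(s,t)]{G}{\lambda}$ with $|G| = v = s+t$, so the existence is settled. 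The only remaining point is to check that the near-factorization so produced is "the" one referred to implicitly in the corollary, i.e. that its mate $T = G - S^{(-1)}$ is forced; but since a $\lambda$-mate of $S$ is unique (Theorem~\ref{T3.thm}), any $(S,T)$ that is an $\NF[(s,t)]{G}{\lambda}$ with $|G| = s+t$ must have exactly this $T$, so the characterization is unambiguous.

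Concretely I would structure the proof as two short paragraphs. First: "($\Rightarrow$) This is part of Theorem~\ref{DSconverse.thm}." Second: "($\Leftarrow$) Suppose $S$ is an $(s+t,s,s-\lambda)$-DS in $G$. Apply Theorem~\ref{differencesets.thm} with $v = s+t$, $k = s$ and difference-set parameter $s-\lambda$ in place of $\lambda$; this gives that $(S,\, G - S^{(-1)})$ is a $\NF[(s,\,(s+t)-s)]{G}{s-(s-\lambda)} = \NF[(s,t)]{G}{\lambda}$ with $|G| = s+t$, as required." I might add a sentence noting that, by Theorem~\ref{T3.thm}, the second coordinate $T$ is necessarily $G - S^{(-1)}$, so the statement is well posed.

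I do not anticipate a genuine obstacle here; the work is purely in matching up parameters and in being careful that the substitution into Theorem~\ref{differencesets.thm} is legitimate (which it is, since that theorem is stated for an arbitrary $(v,k,\lambda)$-DS and imposes no restriction beyond $D$ being a difference set). The one thing to watch is the potential for notational collision — the symbol $\lambda$ plays different roles in "$(v,k,\lambda)$-DS" and in "$\NF[(s,t)]{G}{\lambda}$" — so in the write-up I would be explicit about which $\lambda$ is which, perhaps by temporarily renaming the difference-set parameter, to avoid any reader confusion. Beyond that, the corollary is a direct consequence of Theorems~\ref{differencesets.thm} and~\ref{DSconverse.thm} as advertised.
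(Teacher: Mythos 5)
Your proposal is correct and follows essentially the same route as the paper, which states Corollary~\ref{DScharacterization.cor} as an immediate consequence of combining Theorem~\ref{differencesets.thm} (for the ``if'' direction) with Theorem~\ref{DSconverse.thm} (for the ``only if'' direction), exactly as you do. Your extra remark that Theorem~\ref{T3.thm} forces $T = G - S^{(-1)}$ is a harmless clarification of the same argument, not a different approach.
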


A difference set $D$ in group $G$ is \emph{reversible} if $D = D^{(-1)}$. 
Applying Theorems~\ref{differencesets.thm} and~\ref{DSconverse.thm}, we have the following characterization of symmetric $\NF[(s,t)]{G}{\lambda}$ with $|G|=s+t$.

\begin{corollary}
\label{revDScharacterization.cor}
Let $G$ be a finite group.  Then $(S,T)$ is a symmetric $\NF[(s,t)]{G}{\lambda}$ with $|G|=s+t$ if and only if $S$ is a reversible $(s+t,s,s-\lambda)$-DS in $G$. 
\end{corollary}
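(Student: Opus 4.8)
The plan is to bootstrap off Corollary~\ref{DScharacterization.cor}, which already equates the existence of an $\NF[(s,t)]{G}{\lambda}$ with $|G|=s+t$ having first factor $S$ with the statement that $S$ is an $(s+t,s,s-\lambda)$-DS; all that remains is to track how the \emph{symmetry} of the near-factorization corresponds to the \emph{reversibility} of the difference set. The key elementary observation is that a subset $A\subseteq G$ is reversible (i.e.\ $A=A^{(-1)}$) exactly when it is symmetric in the sense of Section~\ref{symm.sec}, so the two adjectives coincide for a subset.

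For the forward direction I would argue as follows. Suppose $(S,T)$ is a symmetric $\NF[(s,t)]{G}{\lambda}$ with $|G|=s+t$. By definition of a symmetric near-factorization, $S=S^{(-1)}$, so $S$ is reversible; and by Corollary~\ref{DScharacterization.cor} (whose DS-side is furnished by Theorem~\ref{DSconverse.thm}), $S$ is an $(s+t,s,s-\lambda)$-DS. Combining these, $S$ is a reversible $(s+t,s,s-\lambda)$-DS.

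For the converse, suppose $S$ is a reversible $(s+t,s,s-\lambda)$-DS in $G$. By Theorem~\ref{differencesets.thm} (or Corollary~\ref{DScharacterization.cor}), the pair $(S,T)$ with $T=G-S^{(-1)}$ is an $\NF[(s,t)]{G}{\lambda}$. Since $S$ is reversible, $S=S^{(-1)}$ is a symmetric set, and then Lemma~\ref{SimpliesT.lem} guarantees that the other factor $T$ is symmetric as well; hence $(S,T)$ is a symmetric near-factorization with the claimed parameters. (One could equally well bypass Lemma~\ref{SimpliesT.lem} and simply note that $T=G-S$ satisfies $T^{(-1)}=G-S^{(-1)}=G-S=T$.)

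I do not anticipate any real obstacle: the statement is essentially a repackaging of Corollary~\ref{DScharacterization.cor} together with the remark that ``reversible'' and ``symmetric'' mean the same thing for a subset, plus Lemma~\ref{SimpliesT.lem} to propagate symmetry from one factor to the other. The only point that warrants a sentence of care is to confirm that the factor $T$ produced going from a DS to an NF (namely $G-S^{(-1)}$) is the same as the factor appearing when one goes from an NF back to a DS; this consistency is exactly what Theorems~\ref{differencesets.thm} and~\ref{DSconverse.thm} provide, and is in any case forced by the uniqueness of mates (Theorem~\ref{T3.thm}).
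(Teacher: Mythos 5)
Your proposal is correct and follows essentially the same route as the paper, which presents this corollary as an immediate consequence of Theorems~\ref{differencesets.thm} and~\ref{DSconverse.thm}: the only content beyond Corollary~\ref{DScharacterization.cor} is that reversibility of $S$ is the same as symmetry, and that the unique mate $T=G-S^{(-1)}=G-S$ is then symmetric as well (your direct computation, or Lemma~\ref{SimpliesT.lem}, both suffice). Your closing remark invoking uniqueness of mates correctly ties the two directions together, so nothing is missing.
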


A difference set in an abelian group is called an \emph{abelian difference set}. Reversible abelian difference sets have been extensively studied (see \cite[Chapter VI, Section 14]{BJL} and \cite[Section 12]{Ma}). 
Indeed, McFarland has conjectured that a reversible abelian difference set has parameters
$(4u^2,2u^2 \pm u, u^2 \pm u)$ for some integer $u \geq 1$; or parameters $(4000, 775, 150)$ or
$(4000,3225, 2600)$ (see \cite[Chapter VI, Conjecture 14.38]{BJL}).
The difference sets with parameters $(4u^2,2u^2 \pm u, u^2 \pm u)$ are known as \emph{Menon difference sets} or, alternatively,
\emph{Hadamard difference sets}. Below, we summarize the existence status of reversible abelian difference sets.

\begin{theorem} 
\label{revDS.prop}
Up to complementation, the following are all known reversible abelian difference sets. 
\begin{enumerate}
\item {\rm (\cite[Chapter VI, Theorem 14.46]{BJL} and \cite[Example 12.8]{Ma})} Let $G$ be a group of the form 
\[G=(\zed_2)^{2a} \times (\zed_4)^b \times (\zed_{2^{r_1}})^{2c_1} \times \cdots \times (\zed_{2^{r_s}})^{2c_s} \times (\zed_3)^{2d} \times (\zed_{p_1})^{4e_1} \times \cdots \times (\zed_{p_t})^{4e_t},\] where $r_j \ge 1$ and the $p_j$'s are not necessarily distinct odd primes and $a, b, c_1, \ldots, c_s, d, e_1,\ldots,e_t$ are non-negative integers satisfying
\begin{enumerate}
\item  $a+b+c_1+\cdots+c_s>0$.
\item  $a>0$ if $d+e_1+\cdots+e_t>0$. 
\end{enumerate}
Then $G$ contains a reversible $(4u^2,2u^2-u,u^2-u)$ Hadamard difference sets with $|G|=4u^2$. 
\item  {\rm (\cite[Chapter VI, Examples 14.37(a)]{BJL} and \cite[Example 12.4(4)]{Ma})} A reversible $(4000, 775, 150)$-DS in $(\zed_2)^5 \times (\zed_5)^3$.
\end{enumerate}
\end{theorem}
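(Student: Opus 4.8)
The plan is to assemble this theorem from existing constructions and classification results in the difference set literature; it is a compilation of known facts rather than a single new argument, so the ``proof'' consists of citing and organizing the relevant sources. First I would establish part 1 by invoking the recursive construction of reversible Hadamard difference sets collected in \cite[Chapter VI, Theorem 14.46]{BJL} and \cite[Example 12.8]{Ma}. The skeleton is the classical chain of composition theorems (Turyn's product construction together with the McFarland/Kraemer building-block machinery): starting from the elementary reversible examples in $(\zed_2)^2$, $\zed_4$, and $(\zed_{2^r})^2$ (which account for the parameters $a$, $b$, and the $c_j$) and from the examples in $(\zed_3)^2$ and $(\zed_p)^4$ (which account for $d$ and the $e_j$), one composes building blocks to obtain a reversible difference set in the direct product. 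I would check that the exponent bookkeeping is consistent --- computing $|G|$ from the displayed factorization and verifying that $|G| = 4u^2$ with $u$ the square root of the product of the individual group orders, so that the parameters are $(4u^2, 2u^2 - u, u^2 - u)$, whose complement is a $(4u^2, 2u^2 + u, u^2 + u)$-difference set (hence the phrase ``up to complementation'') --- and I would verify that conditions (a) and (b) are exactly the hypotheses under which the composition goes through: condition (a) guarantees that at least one Menon-type factor seeds the recursion, and condition (b) reflects that the ternary and odd-prime building blocks can only be attached in the presence of a $(\zed_2)^2$ factor.

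For part 2 I would simply cite the sporadic reversible $(4000, 775, 150)$-difference set in $(\zed_2)^5 \times (\zed_5)^3$ recorded in \cite[Chapter VI, Examples 14.37(a)]{BJL} and \cite[Example 12.4(4)]{Ma}. It is worth noting explicitly that $4000$ is not of the form $4u^2$ for any integer $u$ (as $1000$ is not a perfect square), so this example lies genuinely outside the family of part 1; together with its complement, a $(4000, 3225, 2600)$-difference set, it realizes the two exceptional parameter sets appearing in McFarland's conjecture \cite[Chapter VI, Conjecture 14.38]{BJL}.

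The delicate point is the word ``known'': the theorem asserts that, up to complementation, these two items exhaust the \emph{currently known} reversible abelian difference sets, and supporting this requires a careful survey confirming that no further reversible abelian families or sporadic examples have appeared in the literature. On the genuinely mathematical side, the step that is not mere bookkeeping is checking that each composition step underlying \cite[Chapter VI, Theorem 14.46]{BJL} actually \emph{preserves} the reversibility property $D = D^{(-1)}$: ordinary Hadamard difference sets need not be reversible, so the recursion must be run with reversible building blocks and with a product operation that respects inversion. I expect tracking reversibility through the recursion to be the main obstacle, and in a full write-up I would either reproduce the relevant lemmas from \cite{BJL,Ma} or cite them precisely while indicating why reversibility persists at each stage.
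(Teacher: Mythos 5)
Your proposal matches the paper's treatment: the paper offers no proof of this statement, presenting it purely as a compilation of known results with the same citations to \cite[Chapter VI, Theorem 14.46]{BJL}, \cite[Example 12.8]{Ma}, \cite[Chapter VI, Examples 14.37(a)]{BJL} and \cite[Example 12.4(4)]{Ma} that you invoke. Your additional remarks (parameter bookkeeping, reversibility being preserved under the product constructions, and $4000$ not being of the form $4u^2$) are correct but go beyond what the paper itself records, which is simply the survey statement with references.
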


Infinite families of symmetric $\NF[(s,t)]{G}{\lambda}$ with $|G|=s+t$ follow from Corollary~\ref{revDScharacterization.cor} and Theorem~\ref{revDS.prop}.  

\begin{corollary}
Symmetric (s,t)-near-factorizations of group $G$ with $|G|=s+t$ exist in the following cases.
\begin{enumerate}
\item Symmetric $\NF[(2u^2-u,2u^2+u)]{G}{u^2}$, where $|G|=4u^2$ and $G$ satisfies the condition in Theorem~\ref{revDS.prop}, part 1.
\item Symmetric $\NF[(775,3225)]{(\zed_2)^5 \times (\zed_5)^3}{625}$.
\end{enumerate}
\end{corollary}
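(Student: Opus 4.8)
The plan is to derive this corollary directly from Corollary~\ref{revDScharacterization.cor} by feeding it the reversible difference sets guaranteed by Theorem~\ref{revDS.prop}. The corollary we must prove is an existence statement, so it suffices to exhibit, in each of the two listed cases, a reversible difference set with the appropriate parameters, and then invoke the ``if'' direction of Corollary~\ref{revDScharacterization.cor}: a reversible $(s+t,s,s-\lambda)$-DS in $G$ yields a symmetric $\NF[(s,t)]{G}{\lambda}$ with $|G|=s+t$.

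For part 1, **I would first** take the Hadamard difference set supplied by Theorem~\ref{revDS.prop}, part~1, which has parameters $(4u^2, 2u^2-u, u^2-u)$ in a group $G$ of order $4u^2$. Here $k = 2u^2-u$, so setting $s=k=2u^2-u$ gives $t = |G|-s = 4u^2 - (2u^2-u) = 2u^2+u$, and the resulting fold number is $\lambda = s - (u^2-u) = (2u^2-u)-(u^2-u) = u^2$. By Corollary~\ref{revDScharacterization.cor}, this reversible DS is equivalent to a symmetric $\NF[(2u^2-u,\,2u^2+u)]{G}{u^2}$ with $|G|=4u^2$, which is exactly case 1 of the corollary. (One should remark that $s-\lambda = u^2-u$ is indeed the $\lambda$-value of the difference set, so the parameter bookkeeping is consistent.)

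For part 2, **I would** take the reversible $(4000,775,150)$-DS in $(\zed_2)^5 \times (\zed_5)^3$ from Theorem~\ref{revDS.prop}, part~2. Setting $s = 775$ gives $t = 4000 - 775 = 3225$ and $\lambda = s - 150 = 775 - 150 = 625$. Again Corollary~\ref{revDScharacterization.cor} converts this into a symmetric $\NF[(775,3225)]{(\zed_2)^5 \times (\zed_5)^3}{625}$ with $|G| = 775+3225 = 4000$, which is case 2.

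The proof is essentially a two-line invocation of a previously established characterization together with arithmetic substitution, so there is no real obstacle; **the only thing to be careful about** is matching the parameters correctly — specifically verifying in each case that $s - \lambda$ equals the $\lambda$-parameter of the difference set and that $s + t = |G|$ — and noting that ``up to complementation'' in Theorem~\ref{revDS.prop} does not cause trouble, since by the complementation remark preceding that theorem a reversible $(v,k,\lambda)$-DS has a reversible complementary $(v,v-k,v-2k+\lambda)$-DS, and Corollary~\ref{revDScharacterization.cor} applies to either, corresponding to swapping the roles of $S$ and $T$.
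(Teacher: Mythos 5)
Your proposal is correct and matches the paper's intent exactly: the paper states this corollary without a separate proof, as an immediate consequence of Corollary~\ref{revDScharacterization.cor} applied to the reversible difference sets of Theorem~\ref{revDS.prop}, which is precisely the substitution $s=k$, $t=|G|-k$, $\lambda=s-(s-\lambda_{\mathrm{DS}})$ bookkeeping you carry out. Your added remark about complementation being harmless is a nice touch but not needed, since the stated cases use the difference sets in the form given by Theorem~\ref{revDS.prop} directly.
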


\begin{example}
{\rm Note that $D = \{(0,1), (1,0), (1,1), (0,3), (3,0), (3,3)\}$ is a reversible $(16,6,2)$ Hadamard DS in $(\zed_4)^2$.
Hence, Corollary \ref{revDScharacterization.cor} yields a symmetric $\NF[(6,10)]{(\zed_4)^2}{4}$.
}$\hfill\blacksquare$
\end{example}

On the other hand, Corollary \ref{revDScharacterization.cor} indicates a possible method to construct an $\NF[(s,t)]{G}{\lambda}$ with $|G|=s+t$ that is not equivalent to any symmetric near-factorization.

\begin{theorem}
\label{nonsymNF.cor}
Let $G$ be a finite group. Let $S$ be an $(s+t,s,s-\lambda)$-DS in $G$. Suppose that $G$ does not admit a reversible $(s+t,s,s-\lambda)$-DS. Then $(S,G-S^{(-1)})$ is an $\NF[(s,t)]{G}{\lambda}$ that is not equivalent to any symmetric near-factorization. 
\end{theorem}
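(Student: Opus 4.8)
The statement splits into two parts, and only the second requires any work. For the first part, observe that since $S$ is an $(s+t,s,s-\lambda)$-DS in $G$, we have $|G| = s+t$ by definition of a difference set, and Theorem~\ref{differencesets.thm} (applied with $v = s+t$, $k = s$, and difference-set parameter $s-\lambda$) immediately gives that $(S, G - S^{(-1)})$ is an $\NF[(s, t)]{G}{s - (s-\lambda)}$, i.e., an $\NF[(s,t)]{G}{\lambda}$. So it remains only to establish that this near-factorization is not equivalent to any symmetric one.

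For that, I would argue by contradiction. Suppose $(S, G - S^{(-1)})$ were equivalent to a symmetric near-factorization $(S',T')$. By definition, equivalence of near-factorizations is a relation between two $\NF[(s,t)]{G}{\lambda}$ over the \emph{same} group $G$, so $(S',T')$ is a symmetric $\NF[(s,t)]{G}{\lambda}$ in $G$ with $|G| = s+t$. Corollary~\ref{revDScharacterization.cor} then applies and forces $S'$ to be a reversible $(s+t,s,s-\lambda)$-DS in $G$. This contradicts the hypothesis that $G$ admits no reversible $(s+t,s,s-\lambda)$-DS, completing the proof.

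If one prefers to avoid invoking Corollary~\ref{revDScharacterization.cor} directly, the same conclusion can be reached by hand: Lemma~\ref{equivNF.lem} provides $f \in \Aut(G)$ and $g \in G$ with $S' = f(S)g$, and a one-line group-ring computation shows $(f(S)g)(f(S)g)^{(-1)} = f(S)f(S)^{(-1)} = f(SS^{(-1)})$, so $S'$ inherits the difference-set equation from $S$; symmetry of $(S',T')$ then makes $S'$ reversible. Either way, there is no real obstacle here: the only point deserving attention is the (easy) observation that a symmetric near-factorization equivalent to $(S, G-S^{(-1)})$ necessarily lives in the same group $G$ and has the same parameters $(s,t,\lambda)$, which is precisely what lets Corollary~\ref{revDScharacterization.cor} bite.
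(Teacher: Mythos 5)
Your proof is correct, and it is in fact slightly more direct than the paper's. Both arguments reach the contradiction through Corollary~\ref{revDScharacterization.cor}: a symmetric $\NF[(s,t)]{G}{\lambda}$ with $|G|=s+t$ forces a reversible $(s+t,s,s-\lambda)$-DS, which the hypothesis forbids. The difference is how the hypothetical symmetric near-factorization $(S',T')$ is handled. The paper first invokes Theorem~\ref{transequiv.thm} to replace $(S',T')$ by a translate-equivalent symmetric pair with $S'=Sg$, notes that a translate of a difference set is again an $(s+t,s,s-\lambda)$-DS, and only then applies Corollary~\ref{revDScharacterization.cor} to conclude that $S'$ is reversible. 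You observe that this detour is unnecessary: since equivalence of near-factorizations is by definition a relation between two $\NF[(s,t)]{G}{\lambda}$ in the same group (and in any case automorphisms and translations preserve cardinalities, so the parameters are forced), $(S',T')$ is itself a symmetric $\NF[(s,t)]{G}{\lambda}$ with $|G|=s+t$, and Corollary~\ref{revDScharacterization.cor} (equivalently, Theorem~\ref{DSconverse.thm} plus symmetry) applies to it directly, with no need to know that $S'$ is a translate of $S$. Your fallback computation $(f(S)g)(f(S)g)^{(-1)}=f(S)f(S)^{(-1)}=f(SS^{(-1)})$ via Lemma~\ref{equivNF.lem} is also valid and essentially reproduces, in one line covering the automorphism as well, the translate-invariance step the paper performs. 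The only thing the paper's longer route buys is the extra (unused) information that the putative reversible difference set would be a translate of $S$ itself; your shorter route fully suffices for the stated theorem, and your first part (that $(S,G-S^{(-1)})$ is an $\NF[(s,t)]{G}{\lambda}$, via Theorem~\ref{differencesets.thm}) matches the paper.
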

\begin{proof}
Suppose that $(S,G-S^{(-1)})$ is equivalent to a symmetric $\NF[(s,t)]{G}{\lambda}$. From  Theorem \ref{transequiv.thm}, $(S,G-S^{(-1)})$ is translate-equivalent to a symmetric $\NF[(s,t)]{G}{\lambda}$, say $(S',T')$. Because $S$ is an $(s+t,s,s-\lambda)$-DS in $G$ and $S' = Sg$ for some $g \in G$, it follows that $S'$ is also an
$(s+t,s,s-\lambda)$-DS in $G$. However, from Corollary \ref{revDScharacterization.cor}, $(S',T')$ is symmetric, so $S'$ must be reversible. This contradicts the assumption that a reversible $(s+t,s,s-\lambda)$-DS in $G$ does not exist.
\end{proof}

\begin{remark}
{\rm \mbox{\quad}\vspace{-.2in}\\
\begin{enumerate}
\item There are  triples $(G,s,t)$ such that every $(s,t)$-near-factorization in $G$ is necessarily symmetric. By Theorem~\ref{revDS.prop}(1), for $a \ge 1$, each $(2^{2a+2},2^{2a+1}-2^a,2^{2a}-2^a)$ Hadamard DS in $(\zed_{2})^{2a+2}$ is reversible. By Corollary~\ref{revDScharacterization.cor}, each $\NF[(2^{2a+1}-2^a,2^{2a+1}+2^a)]{(\zed_{2})^{2a+2}}{2^{2a}}$ is symmetric.
\item There are  triples $(G,s,t)$ such that no $(s,t)$-near-factorization in $G$ is symmetric.
By \cite[Chapter VI, Theorem 14.39.g]{BJL}, a reversible $(v,k,\lambda)$ abelian DS must have parameters 
\begin{equation}
\label{revDS.eq}
\begin{split}
     v&= m(m+a+1)(m+a-1)/a, \\
     k&= m(m+a), \\
     \lambda&= ma,
\end{split}
\end{equation}
where $a$ and $m$ are positive integers having opposite parity, and $a$ divides $m^2-1$. 
Consequently, in view of Theorem \ref{nonsymNF.cor}, each $(v,k,\lambda)$-DS in an abelian group $G$ whose parameters do not satisfy the above conditions necessarily leads to a $\NF[(k,v-k)]{G}{k-\lambda}$ that is not symmetric. Indeed, by Corollary~\ref{revDScharacterization.cor}, if a group $G$ admits $(v,k,\lambda)$-DS but no reversible $(v,k,\lambda)$-DS, then every $\NF[(k,v-k)]{G}{k-\lambda}$ is not symmetric. Specifically, we have the following.
\begin{enumerate}
\item From \cite[Chapter VI, Theorem 14.39.a]{BJL}, each abelian group $G$ containing a $(v,k,\lambda)$-DS with $v$ being odd does not admit a reversible $(v,k,\lambda)$-DS. Therefore, $G$ admits a $\NF[(k,v-k)]{G}{k-\lambda}$ but no symmetric $(k,v-k)$-near-factorizations.
\item Let $G$ be an abelian $2$-group with order $2^{2a+2}$ and exponent $2^{a+2}$ for $a \ge 1$. Then by \cite[Theorem 1.2, Theorem 3.5]{DJ}, $G$ contains a $(2^{2a+2},2^{2a+1}-2^a,2^{2a}-2^a)$ Hadamard difference set but no reversible Hadamard difference set with such parameters. Therefore, $G$ admits a $\NF[(2^{2a+1}-2^a,2^{2a+1}+2^a)]{G}{2^{2a}}$ but no symmetric $(2^{2a+1}-2^a,2^{2a+1}+2^a)$-near-factorization.
\end{enumerate}
\item There are  triples $(G,s,t)$ such that $G$ admits both a symmetric $(s,t)$-near-factorization and an $(s,t)$-near-factorization that is not equivalent to any symmetric near-factorization. Specifically, we have the following.
\begin{enumerate}
\item For $(G,s,t)=(\zed_4 \times (\zed_2)^2,6,10)$: 

$D_1=\{ (0,0,0), (0,0,1), (0,1,0), (1,0,0), (2,1,1), (3,0,0) \}$ is a reversible $(16,6,2)$ Hadamard difference set in $G$. By Corollary \ref{revDScharacterization.cor}, $(D_1,G-D_1^{(-1)})$ is a symmetric $\NF[(6,10)]{G}{4}$.

$D_2=\{ (0,0,0), (0,0,1), (0,1,0), (1,0,0), (2,0,0), (3,1,1) \}$ is a $(16,6,2)$ Hadamard difference set in $G$. It can be verified that $D_2$ is not translate-equivalent to a symmetric $(16,6,2)$-DS in $G$. Hence, 
from Theorem \ref{transequiv.thm}  and  Corollary \ref{revDScharacterization.cor}, 
$(D_2,G-D_2^{(-1)})$ is a $\NF[(6,10)]{G}{4}$ that is not equivalent to any symmetric $(6,10)$-near-factorization.

\item For $(G,s,t)=((\zed_4)^2,6,10)$: 

$D_1=\{ (1,0), (1,1), (2,0), (2,2), (3,0), (3,3) \}$ is a reversible $(16,6,2)$ Hadamard difference set in $G$. By Corollary \ref{revDScharacterization.cor}, $(D_1,G-D_1^{(-1)})$ is a symmetric $\NF[(6,10)]{G}{4}$.

$D_2=\{ (0,0), (0,1), (0,3), (1,0), (2,0), (3,2) \}$ is a $(16,6,2)$ Hadamard difference set in $G$. It can be verified that $D_2$ is not translate-equivalent to a symmetric $(16,6,2)$-DS in $G$. Hence, 
from Theorem \ref{transequiv.thm}  and  Corollary \ref{revDScharacterization.cor}, 
$(D_2,G-D_2^{(-1)})$ is a $\NF[(6,10)]{G}{4}$ not equivalent to any symmetric $(6,10)$-near-factorization.
\end{enumerate}
We note that the above difference sets in $\zed_4 \times (\zed_2)^2$ and $(\zed_4)^2$ are derived from the \emph{La Jolla Difference Sets Repository} maintained by Gordon \cite{Gordon}. $\hfill\blacksquare$
\end{enumerate}
}
\end{remark}

Corollaries \ref{DScharacterization.cor} and \ref{revDScharacterization.cor} provide nice characterizations of $\NF[(s,t)]{G}{\lambda}$ and symmetric $\NF[(s,t)]{G}{\lambda}$ with $|G|=s+t$ in terms of difference sets and reversible difference sets. We now examine $\NF[(s,t)]{G}{\lambda}$ with $|G|=s+t+1$. These will turn out to be equivalent to certain partial difference sets.

A \emph{$(v, k, \lambda, \mu)$-partial difference set} (or for short, \emph{$(v, k, \lambda, \mu)$-PDS}) in a group $G$ of order $v$ is  a subset $D \subseteq G \setminus \{e\}$ such that $|D| = k$ and the following group ring equation is satisfied:
\[
 D D^{(-1)} = (k - \mu) \cdot e + (\lambda - \mu) D + \mu G,
\]
where $e$ is the identity element in $G$.
That is, when we compute the ``differences'' of elements in $D$, there are exactly $\lambda$ occurrences of each  element of $D$, and exactly $\mu$ occurrences of each nonidentity element of $G \setminus D$.
Note that a PDS with $\lambda = \mu$ is a difference set. We will be considering PDS with $\lambda \neq \mu$ in the rest of this section.  

Suppose $D$  is a $(v, k, \lambda, \mu)$-PDS in $G$. Then $G \setminus (D \cup \{e\})$ is called the \emph{complement} of $D$ in $G$. It is straightforward to see that $G \setminus (D \cup \{e\})$  is a $(v,v-k-1,v-2-2k+\mu,v-2k+\lambda)$-PDS in $G$. 

\begin{example}
\label{13.ex}
{\rm The set $\{1,3,4,9,10,12\}$ is a  $(13, 6, 2, 3)$-PDS in $(\zed_{13},+)$.}$\hfill\blacksquare$
\end{example}

\begin{theorem}
\label{PDStoNF.thm}
Suppose $D$ is a $(s+t+1, s, s-\lambda-1, s-\lambda)$-PDS in a group $G$. 
Then $(S,T)$ is an \NF[(s,t)]{G}{\lambda}, where $S = D$ and $T = G - D^{(-1)} - e$.
\end{theorem}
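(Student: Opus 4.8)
The plan is to mimic the proof of Theorem~\ref{differencesets.thm}, working entirely in the group ring $\zed[G]$, but keeping careful track of the extra identity term that a PDS carries compared with a DS. Set $S = D$ and $T = G - D^{(-1)} - e$; I must show $ST = \lambda(G-e)$. First I would record the sizes: $|S| = s$ by hypothesis, and $|T| = |G| - |D^{(-1)}| - 1 = (s+t+1) - s - 1 = t$, which also uses that $e \notin D$ (part of the definition of a PDS) so that $G - D^{(-1)} - e$ genuinely has nonnegative $0$--$1$ coefficients. This is the bookkeeping that makes $(S,T)$ a candidate $(s,t)$-near-factorization in the first place.

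Next I would expand the product. Using $SG = sG$ and the defining PDS equation
\[
D D^{(-1)} = (s-\lambda) \cdot e + \bigl((s-\lambda-1)-(s-\lambda)\bigr) D + (s-\lambda) G = (s-\lambda)\, e - D + (s-\lambda) G,
\]
(here I have substituted $k = s$, $\lambda_{\mathrm{PDS}} = s-\lambda-1$, $\mu = s-\lambda$, so that $k - \mu = s - \lambda$ and $\lambda_{\mathrm{PDS}} - \mu = -1$), I compute
\begin{align*}
ST &= S\,(G - D^{(-1)} - e) \\
&= SG - S D^{(-1)} - S e \\
&= sG - D D^{(-1)} - D \\
&= sG - \bigl((s-\lambda)\, e - D + (s-\lambda) G\bigr) - D \\
&= \bigl(s - (s-\lambda)\bigr) G - (s-\lambda)\, e \\
&= \lambda G - (s-\lambda)\, e.
\end{align*}
At this point the calculation seems to give $\lambda G - (s-\lambda) e$ rather than $\lambda(G - e) = \lambda G - \lambda e$, so the argument only closes if $s - \lambda = \lambda$, i.e.\ $s = 2\lambda$. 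The resolution I expect is that the necessary counting condition for a $(s+t+1, s, s-\lambda-1, s-\lambda)$-PDS to exist forces exactly this: equating the number of ordered ``difference'' pairs, $s(s-1) = (s-\lambda-1)\cdot s + (s-\lambda)\cdot(t)$ together with $s + t + 1 = v$ and the product relation $st = \lambda(v-1) = \lambda(s+t)$, which should collapse to $s = 2\lambda$ (equivalently $t = 2\lambda$ as well, matching the symmetry one expects here). So the real first step is to derive $s = t = 2\lambda$ from the PDS parameters, and then the displayed computation yields $ST = \lambda G - \lambda e = \lambda(G-e)$ exactly.

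The main obstacle is therefore not the group-ring manipulation, which is routine, but pinning down that the PDS parameter constraints genuinely imply $s - \lambda = \lambda$; I would verify this by combining the standard PDS counting identity $k(k-1) = \lambda_{\mathrm{PDS}} k + \mu(v-k-1)$ with $v = s+t+1$ and substituting $\lambda_{\mathrm{PDS}} = s-\lambda-1$, $\mu = s-\lambda$, $k = s$, and then solving the resulting linear relation between $s$, $t$, $\lambda$. Once $s = 2\lambda$ is in hand, the coefficient of $e$ in $ST$ is $-(s-\lambda) = -\lambda$ and the coefficient of every other element is $\lambda$, so $(S,T)$ is an \NF[(s,t)]{G}{\lambda} as claimed; in particular $st \neq e$ for all $s \in S$, $t \in T$ follows from the vanishing of the $e$-coefficient being negative rather than zero being impossible — more precisely, the $e$-coefficient of $ST$ in $\zed[G]$ is nonpositive only because no such product equals $e$, which is exactly the statement $-\lambda \le 0$ read correctly as ``$0$ representations'' after we know $ST = \lambda(G-e)$.
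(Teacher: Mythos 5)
There is a genuine error in your expansion of the PDS equation, and your attempted rescue of it does not hold. With $k = s$ and $\mu = s-\lambda$, the coefficient of the identity in $DD^{(-1)}$ is $k-\mu = s-(s-\lambda) = \lambda$, not $s-\lambda$ as you wrote. The correct substitution gives
\begin{align*}
DD^{(-1)} &= \lambda\, e - D + (s-\lambda) G,
\end{align*}
and then
\begin{align*}
ST &= sG - DD^{(-1)} - D = sG - \lambda\, e + D - (s-\lambda)G - D = \lambda G - \lambda\, e = \lambda(G-e),
\end{align*}
which is exactly the paper's computation; the argument closes with no extra hypothesis. In particular there is no need for, and no truth to, the claim that the PDS parameters force $s = 2\lambda$.

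Your proposed patch is in fact false: applying the augmentation map to the PDS equation gives $s^2 = \lambda + (-1)s + s - \lambda + (s-\lambda)(s+t)$, i.e.\ $s^2 = (s-\lambda)(s+t)$, which is equivalent to $st = \lambda(s+t) = \lambda(|G|-1)$ --- the usual product condition for an $(s,t)$-$\lambda$-fold near-factorization --- and nothing more. A concrete counterexample to $s=2\lambda$ appears in the paper itself: the $(243,22,1,2)$-PDS in $(\zed_3)^5$ has $s=22$, $\lambda = 20$, $t=220$, so $s \neq 2\lambda$, yet it yields a perfectly good $(22,220)$-near-factorization with $\lambda = 20$ (Corollary \ref{cor2.21}, part 5). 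Had your reasoning been right, the theorem would only cover the ``half-set'' case $s=t=2\lambda$ (Paley-type PDS), which is strictly weaker than what is claimed and needed. The closing remark about the $e$-coefficient is also muddled: once $ST = \lambda(G-e)$ is established in $\zed[G]$ with $S,T$ genuine subsets, the $e$-coefficient of $ST$ is $0$, which directly says no product $st$ with $s\in S$, $t\in T$ equals $e$; there is no ``negative coefficient'' to interpret.
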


\begin{proof}
Let $S$ and $T$ be defined as in the statement of the theorem. We have
\begin{align*}
 ST &= DG - DD^{(-1)} - D \\
 &= sG - ((s - (s - \lambda)) \cdot e + (-1) D + (s - \lambda) G) - D\\
&= sG - (\lambda \cdot e - D + ( s-\lambda) G) - D\\
&= \lambda ( G - e).
\end{align*}
Therefore $(S,T)$ is an \NF[(s,t)]{G}{\lambda}.
\end{proof}

\begin{example}
{\rm From the $(13, 6, 2, 3)$-PDS given in Example \ref{13.ex}, we obtain a \NF[(6,6)]{\zed_{13}}{3}.
The near-factorization has $S = \{1,3,4,9,10,12\}$ and $T = \{ 2,5,6,7,8,11\}$.}$\hfill\blacksquare$
\end{example}

Now we prove a converse result.

\begin{theorem}
\label{NFtoPDS.thm}
Suppose $(S,T)$ is an \NF[(s,t)]{G}{\lambda}, where $|G| = s+t+1$. Then $S$ is 
translate-equivalent to an $(s+t+1, s, s-\lambda-1, s-\lambda)$-PDS in $G$ and $T$ is  translate-equivalent to
an $(s+t+1, t, t-\lambda-1, t-\lambda)$-PDS.
\end{theorem}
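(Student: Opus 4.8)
The plan is to exploit the fact that when $|G|=s+t+1$ the sets $S^{(-1)}$ and $T$ are disjoint and together omit exactly one element of $G$, which will play the role of the identity after an appropriate translation. I would begin by invoking Lemma~\ref{Sinverse.lem} to get $S^{(-1)}\cap T=\emptyset$; since $|S^{(-1)}|+|T|=s+t=|G|-1$, there is then a unique element $z\in G$ with $z\notin S^{(-1)}\cup T$, which in the group ring $\zed[G]$ reads $G=S^{(-1)}+T+z$.

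Next I would set $S'=Sz$, a right translate of $S$, and note that $e\notin S'$ because $z\notin S^{(-1)}$. The decisive computation is
\begin{align*}
S'(S')^{(-1)} &= SS^{(-1)} = S\bigl(G - T - z\bigr) \\
&= sG - ST - Sz \\
&= sG - \lambda(G-e) - S' \\
&= (s-\lambda)G + \lambda e - S'.
\end{align*}
Reading off the coefficient of $e$, of the elements of $S'$, and of the remaining nonidentity elements of $G$ (using $e\notin S'$) shows that $S'$ is an $(s+t+1,\,s,\,s-\lambda-1,\,s-\lambda)$-PDS, and $S$ is translate-equivalent to $S'$ by construction. If $G$ is abelian, the set $T$ is handled identically: from $T^{(-1)}=G-S-z^{-1}$ one obtains $T'(T')^{(-1)}=TT^{(-1)}=(t-\lambda)G+\lambda e-T'$ for $T'=Tz^{-1}$, with $e\notin T'$ since $z\notin T$, so $T'$ is an $(s+t+1,\,t,\,t-\lambda-1,\,t-\lambda)$-PDS.

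For a general, possibly nonabelian, $G$ the mirror computation for $T$ breaks down, because it would require the product $TS$, which need not equal $ST=\lambda(G-e)$ in the noncommutative ring $\zed[G]$. To get around this I would apply the part already established to the near-factorization $(T^{(-1)},S^{(-1)})$ --- which is an \NF[(t,s)]{G}{\lambda}, as observed in Section~\ref{properties.sec} --- to conclude that $T^{(-1)}$ is translate-equivalent to an $(s+t+1,\,t,\,t-\lambda-1,\,t-\lambda)$-PDS $P$. Since the PDS parameters $\lambda_{\mathrm{PDS}}=t-\lambda-1$ and $\mu=t-\lambda$ are unequal, applying the antiautomorphism $x\mapsto x^{(-1)}$ to the defining equation $PP^{(-1)}=(t-\mu)e+(\lambda_{\mathrm{PDS}}-\mu)P+\mu G$ forces $P=P^{(-1)}$, and hence $T$ is translate-equivalent to $P$ as well. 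I expect this last step to be the only real obstacle: the clean group-ring computation that works for $S$ has no direct analogue for $T$ outside the abelian case, and one must instead detour through the reversed near-factorization together with the (easy) observation that a PDS with $\lambda\neq\mu$ is automatically reversible. Everything else is routine bookkeeping in $\zed[G]$.
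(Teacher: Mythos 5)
Your proof is correct and follows essentially the same route as the paper: identify, via Lemma~\ref{Sinverse.lem}, the unique element $z$ missing from $S^{(-1)}\cup T$, translate, and compute $SS^{(-1)}=S(G-T-z)$ in $\zed[G]$ to read off the PDS parameters. Your explicit handling of the factor $T$ in the nonabelian case (passing to the reversed near-factorization $(T^{(-1)},S^{(-1)})$ and using that a PDS with $\lambda\neq\mu$ is necessarily reversible) carefully fills in what the paper's proof compresses into the word ``Similarly.''
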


\begin{proof}
From Lemma \ref{Sinverse.lem}, we have $S^{(-1)} \subseteq G \setminus T$. Equivalently, 
$T \subseteq G \setminus S^{(-1)}$. We have $|G| = s+t+1$, $|S^{(-1)}| = |S| = s$ and $|T| = t$, so it follows that
$G \setminus (S^{(-1)} \cup T) = \{g\}$ for some $g \in G$. 

If $g \neq e$, replace $S$ by $S' = Sg$ and replace $T$ by $T' = g^{-1}T$. Then 
$S'T' = G - e$. We have $(S')^{(-1)} = (Sg)^{(-1)} = g^{-1}(S)^{(-1)}$ and 
\begin{align*}
G \setminus ((S')^{(-1)} \cup T') &= G \setminus (g^{-1}(S)^{(-1)} \cup g^{-1}T) \\
&= g^{-1}G \setminus (g^{-1}(S)^{(-1)} \cup g^{-1}T) \\
&= g^{-1}(G \setminus ((S)^{(-1)} \cup T) \\
&= \{g^{-1}g\}\\
&= \{e\}.
\end{align*}

Therefore, from now on, we assume that  $G \setminus (S^{(-1)} \cup T) = \{e\}$.
Because we also have $T \subseteq G \setminus S^{(-1)}$, it follows that $S^{(-1)}=G-T-e$.
We compute 
\begin{align*}
SS^{(-1)} & = S (G - T - e)\\
&= SG - ST - Se\\
&= sG - \lambda(G - e) - S\\
&= \lambda \cdot e + (s - \lambda)G - S\\
&= s \cdot e + (s-\lambda-1)S + (s-\lambda)(G-S-e).
\end{align*}
Therefore, $S$ is an $(s+t+1, s, s-\lambda-1, s-\lambda)$-PDS in $G$.
Similarly, $T$ is an $(s+t+1, t, t-\lambda-1, t-\lambda)$-PDS in $G$.
\end{proof}

If $D$ is a $(v, k, \lambda, \mu)$-PDS where $\lambda \neq \mu$, then $D = D^{(-1)}$ (see, for example,
\cite[Proposition 1.2]{Ma}). Combining this fact with Lemma \ref{SimpliesT.lem} and Theorems~\ref{PDStoNF.thm}, ~\ref{NFtoPDS.thm}, we have the following characterization of $\NF[(s,t)]{G}{\lambda}$ with $|G|=s+t+1$.
\begin{corollary}
\label{PDScharacterization.cor}
Let $G$ be a finite group. Then $(S,T)$ is an $\NF[(s,t)]{G}{\lambda}$ with $|G|=s+t+1$ if and only if $S$ is  translate-equivalent to an $(s+t+1, s, s-\lambda-1, s-\lambda)$-PDS in $G$. Moreover, $(S,T)$ is necessarily  translate-equivalent to a symmetric near-factorization.  
\end{corollary}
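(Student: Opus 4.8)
The plan is to assemble the corollary from Theorems~\ref{PDStoNF.thm} and~\ref{NFtoPDS.thm} together with the symmetry property of proper partial difference sets. For the ``if'' direction, suppose $S$ is translate-equivalent to an $(s+t+1,s,s-\lambda-1,s-\lambda)$-PDS $D$, say $S=Dg$ for some $g\in G$. By Theorem~\ref{PDStoNF.thm}, $(D,\,G-D^{(-1)}-e)$ is an \NF[(s,t)]{G}{\lambda}. Setting $T=g^{-1}(G-D^{(-1)}-e)$, one computes $ST=(Dg)(g^{-1}(G-D^{(-1)}-e))=D(G-D^{(-1)}-e)=\lambda(G-e)$, and $|T|=|G|-s-1=t$, so $(S,T)$ is the desired near-factorization (the group order is forced to be $s+t+1$ since $D$ lives in a group of that order). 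If instead the translate-equivalence is on the left, the argument is the obvious mirror image. The ``only if'' direction is precisely Theorem~\ref{NFtoPDS.thm}.

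For the ``moreover'' assertion, I would revisit the conclusion of Theorem~\ref{NFtoPDS.thm}: given an \NF[(s,t)]{G}{\lambda}\ $(S,T)$ with $|G|=s+t+1$, there is $g\in G$ such that $S'=Sg$ is an $(s+t+1,s,s-\lambda-1,s-\lambda)$-PDS and $T'=g^{-1}T$ is an $(s+t+1,t,t-\lambda-1,t-\lambda)$-PDS; moreover $(S',T')$ is again an \NF[(s,t)]{G}{\lambda}, since $S'T'=Sg\,g^{-1}T=ST=\lambda(G-e)$, and it is by construction translate-equivalent to $(S,T)$. Since the first PDS has $\lambda$-parameter $s-\lambda-1$ and $\mu$-parameter $s-\lambda$, which are distinct, the cited fact that a $(v,k,\lambda,\mu)$-PDS with $\lambda\neq\mu$ satisfies $D=D^{(-1)}$ forces $S'=(S')^{(-1)}$; then Lemma~\ref{SimpliesT.lem} gives $T'=(T')^{(-1)}$ as well. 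Hence $(S',T')$ is a symmetric near-factorization translate-equivalent to $(S,T)$, as claimed.

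I do not expect a serious obstacle: the corollary is essentially a repackaging of Theorems~\ref{PDStoNF.thm} and~\ref{NFtoPDS.thm}. The only point needing care is the ``moreover'' part, where one must observe that the PDS produced in Theorem~\ref{NFtoPDS.thm} is proper (so it is automatically symmetric) and that the passage from $(S,T)$ to the symmetric pair $(S',T')$ involves only a translation rather than a more general equivalence — both facts being visible directly from the construction used in the proof of Theorem~\ref{NFtoPDS.thm}.
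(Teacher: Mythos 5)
Your proposal is correct and follows essentially the same route as the paper: combine Theorems~\ref{PDStoNF.thm} and~\ref{NFtoPDS.thm} with the fact that a PDS having $\lambda\neq\mu$ is symmetric, then apply Lemma~\ref{SimpliesT.lem} to get symmetry of the translated pair. The only difference is that you spell out details (the translation bookkeeping and the check that $(S',T')=(Sg,g^{-1}T)$ is again a near-factorization) that the paper leaves implicit.
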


In Corollary \ref{PDScharacterization.cor}, the two factors $S$ and $T$ are both partial difference sets with $\lambda-\mu=-1$. Note that a partial difference set in an abelian group is called an \emph{abelian partial difference set}. Abelian partial difference sets with $\lambda-\mu=-1$ have been extensively studied. Their parameters have been classified in \cite[Theorem 1.1]{AJMP} and \cite[Theorem 1.2]{Wang} and all known constructions are summarized below.

\begin{theorem}
\label{PDScharacterization.prop}
Let $D$ be an abelian $(v,k,\lambda,\mu)$-PDS with $\lambda-\mu=-1$ in a group $G$. Then up to complementation, one of the following holds.
\begin{enumerate}
\item $D$ is a Paley type PDS with parameter $(v,k,\lambda,\mu)=(v, (v-1)/2, (v- 5)/4, (v- 1)/4)$ for some $v \equiv 1 \bmod 4$, where one of the following conditions is satisfied:
\begin{enumerate}
\item $v$ equals a prime power $p^s$ with $p^s \equiv 1 \bmod 4$. Such partial difference sets exist in $(\zed_p)^s$ (see \cite{Paley}) and $(\zed_{p^{\frac{s}{2}}})^2$ when $s$ is even (see \cite{LM}).
\item $v=n^4$ or $v=9n^4$ where $n>1$ is odd. Suppose $n$ has prime factorization $n=\prod_{i=1}^r p_{i}^{t_i}$, then such partial difference sets exist in groups $G$, where $G = \prod_{i=1}^r (\zed_{p_i})^{4t_i}$ or $G = (\zed_3)^2 \times \prod_{i=1}^r (\zed_{p_i})^{4t_i}$ (see \cite{Polhill}).
\end{enumerate}
\item $D$ is a $(243,22,1,2)$-PDS. Such a partial difference set exists in $\zed_3^5$ (see \cite{BLS}).
\end{enumerate}
\end{theorem}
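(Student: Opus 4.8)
The plan is to assemble this classification entirely from the existing literature on abelian partial difference sets with $\lambda-\mu=-1$; nothing genuinely new has to be proven, only organized, so the ``proof'' is a two-part argument: first pin down the admissible parameter tuples, then exhibit a construction for each. For the parameter part I would invoke \cite[Theorem 1.1]{AJMP} together with \cite[Theorem 1.2]{Wang}. These results show that if $D$ is a nontrivial abelian $(v,k,\lambda,\mu)$-PDS with $\lambda-\mu=-1$, then, after possibly replacing $D$ by its complement $G\setminus(D\cup\{e\})$, the tuple $(v,k,\lambda,\mu)$ is either of Paley type $\bigl(v,(v-1)/2,(v-5)/4,(v-1)/4\bigr)$ with $v\equiv 1\bmod 4$, or it is $(243,22,1,2)$. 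One must first check that ``up to complementation'' is meaningful here: using the complement parameters $(v,v-k-1,v-2-2k+\mu,v-2k+\lambda)$ recorded just before the statement, one verifies $(\lambda'-\mu')=(\lambda-\mu)=-1$, so the complement lies in the same class. From the same two references I would also extract the extra number-theoretic restriction on $v$ in the Paley case, namely that $v$ is a prime power $p^s\equiv 1\bmod 4$, or $v=n^4$, or $v=9n^4$ with $n>1$ odd.

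For the existence part I would treat each surviving parameter family separately and cite the known construction. In the prime-power Paley case, the classical Paley construction realizes the parameters in $(\zed_p)^s$ \cite{Paley}, and when $s$ is even there is in addition a construction in $(\zed_{p^{s/2}})^2$ due to \cite{LM}. For $v=n^4$ or $v=9n^4$ with $n=\prod_{i=1}^r p_i^{t_i}$ odd, the constructions of \cite{Polhill} realize the parameters in $G=\prod_{i=1}^r(\zed_{p_i})^{4t_i}$ and $G=(\zed_3)^2\times\prod_{i=1}^r(\zed_{p_i})^{4t_i}$ respectively. For the sporadic tuple $(243,22,1,2)$, the construction is in $\zed_3^5$ and is due to \cite{BLS}. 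Combining the parameter classification with this catalogue of constructions yields exactly the statement.

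The main obstacle, to the extent there is one, is purely bookkeeping rather than mathematics: one must confirm that complementation preserves the condition $\lambda-\mu=-1$ (so the phrase ``up to complementation'' is legitimate), and one must check that each cited construction produces precisely the tuples listed, in particular that the families $v=p^s$, $v=n^4$, and $v=9n^4$ are correctly stated and that the itemization ``one of the following holds'' is exhaustive given \cite{AJMP,Wang}. No step requires an argument beyond what is already contained in the cited classification and construction theorems.
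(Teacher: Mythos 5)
Your proposal matches the paper's treatment: the paper gives no independent proof but presents this theorem as a summary of known results, citing \cite{AJMP} and \cite{Wang} for the parameter classification (Paley parameters or $(243,22,1,2)$, with the restrictions $v=p^s$, $n^4$, or $9n^4$) and \cite{Paley,LM,Polhill,BLS} for the constructions, exactly as you do. Your complementation check (that $\lambda-\mu=-1$ is preserved) is correct and is the only bookkeeping the statement requires.
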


Infinite families of symmetric $\NF[(s,t)]{G}{\lambda}$ with $|G|=s+t+1$ follow from Corollary \ref{PDScharacterization.cor} and Theorem \ref{PDScharacterization.prop}.

\begin{corollary} 
\label{cor2.21}
Symmetric (s,t)-near-factorizations of a group $G$ with $|G|=s+t+1$ exist in the follow cases.
\begin{enumerate}
\item  Symmetric $\NF[(\frac{p^s-1}{2},\frac{p^s-1}{2})]{G}{\frac{p^s-1}{4}}$ exist when $G=(\zed_p)^s$ and $p$ is an odd prime with $p^s \equiv 1 \bmod 4$. 
\item  Symmetric $\NF[(\frac{p^{2s}-1}{2},\frac{p^{2s}-1}{2})]{G}{\frac{p^{2s}-1}{4}}$ exist when $G=(\zed_{p^s})^2$, $p$ is an odd prime and $s \ge 2$. 
\item  Symmetric $\NF[(\frac{n^4-1}{2},\frac{n^4-1}{2})]{G}{\frac{n^4-1}{4}}$ exist when $n>1$ is odd with prime factorization $n=\prod_{i=1}^r p_{i}^{t_i}$ and $G=\prod_{i=1}^r (\zed_{p_i})^{4t_i}$.
\item  Symmetric $\NF[(\frac{9n^4-1}{2},\frac{9n^4-1}{2})]{G}{\frac{9n^4-1}{4}}$ exist when $n>1$ is odd with prime factorization $n=\prod_{i=1}^r p_{i}^{t_i}$ and $G=(\zed_3)^2 \times \prod_{i=1}^r (\zed_{p_i})^{4t_i}$.
\item  A symmetric $\NF[(22,220)]{(\zed_3)^5}{20}$ exists.
\end{enumerate}
\end{corollary}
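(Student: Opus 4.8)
The plan is to derive every item by combining Theorem~\ref{PDStoNF.thm} (packaged in Corollary~\ref{PDScharacterization.cor}) with the partial difference sets catalogued in Theorem~\ref{PDScharacterization.prop}, so that the only real content is a parameter match. Recall that Theorem~\ref{PDStoNF.thm} says: if $G$ contains a $(s+t+1,\,s,\,s-\lambda-1,\,s-\lambda)$-PDS $D$, then $(D,\,G-D^{(-1)}-e)$ is an \NF[(s,t)]{G}{\lambda}, and here $|G|=s+t+1$. Moreover, a $(v,k,\lambda_{\mathrm P},\mu_{\mathrm P})$-PDS with $\lambda_{\mathrm P}\neq\mu_{\mathrm P}$ satisfies $D=D^{(-1)}$, so the first factor $S=D$ is symmetric and hence, by Lemma~\ref{SimpliesT.lem}, the near-factorization is symmetric. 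Thus the proof reduces to bookkeeping: for each family in Theorem~\ref{PDScharacterization.prop}, solve the system $v=s+t+1$, $k=s$, $\lambda_{\mathrm P}=s-\lambda-1$, $\mu_{\mathrm P}=s-\lambda$ for $(s,t,\lambda)$, verify its consistency, and note that $\lambda_{\mathrm P}\neq\mu_{\mathrm P}$.

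For cases 1--4 I would use the Paley type family, whose parameters are $(v,k,\lambda_{\mathrm P},\mu_{\mathrm P})=\bigl(v,\tfrac{v-1}{2},\tfrac{v-5}{4},\tfrac{v-1}{4}\bigr)$ for $v\equiv 1\bmod 4$. Taking $s=k=\tfrac{v-1}{2}$ and solving $s-\lambda=\mu_{\mathrm P}=\tfrac{v-1}{4}$ gives $\lambda=\tfrac{v-1}{4}$; one then checks $s-\lambda-1=\tfrac{v-5}{4}=\lambda_{\mathrm P}$, so the system is consistent, $t=v-s-1=\tfrac{v-1}{2}=s$, and $\lambda_{\mathrm P}-\mu_{\mathrm P}=-1\neq0$. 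Substituting the admissible values of $v$ and the corresponding groups from Theorem~\ref{PDScharacterization.prop}(1) --- $v=p^s$ in $(\zed_p)^s$, $v=p^{2s}$ in $(\zed_{p^s})^2$, $v=n^4$ in $\prod_i(\zed_{p_i})^{4t_i}$, and $v=9n^4$ in $(\zed_3)^2\times\prod_i(\zed_{p_i})^{4t_i}$ --- and using that $v\equiv 1\bmod 4$ in each (a hypothesis in case~1; automatic in cases~2--4, since squares and fourth powers of odd integers, and $9$ times such a fourth power, are all $\equiv 1\bmod 4$), yields precisely the four stated parameter series. Case 5 uses the sporadic $(243,22,1,2)$-PDS in $(\zed_3)^5$ from Theorem~\ref{PDScharacterization.prop}(2): here $s=k=22$ and $s-\lambda=\mu_{\mathrm P}=2$ forces $\lambda=20$, indeed $s-\lambda-1=1=\lambda_{\mathrm P}$, and $t=243-22-1=220$, giving a symmetric \NF[(22,220)]{(\zed_3)^5}{20}.

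I do not expect a genuine obstacle: the argument is a direct composition of two earlier results with an elementary parameter computation. The only points requiring care are the overloaded symbol $s$ (the near-factorization size versus the exponent appearing in $(\zed_p)^s$ and $p^{2s}$) and the routine numerical checks that each relevant $v$ is $\equiv 1\bmod 4$ and that $\tfrac{v-5}{4}$ and $\tfrac{v-1}{4}$ are nonnegative integers. One may also remark that the hypothesis $s\ge 2$ in case~2 only avoids repeating the $(\zed_p)^2$ instance already covered by case~1 and is not needed for the conclusion.
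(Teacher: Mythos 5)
Your proposal is correct and follows essentially the same route the paper intends: it obtains Corollary \ref{cor2.21} by feeding the partial difference sets of Theorem \ref{PDScharacterization.prop} into Theorem \ref{PDStoNF.thm} (equivalently Corollary \ref{PDScharacterization.cor}), using $D=D^{(-1)}$ for $\lambda\neq\mu$ together with Lemma \ref{SimpliesT.lem} to get symmetry, and your parameter bookkeeping (including $\lambda=\frac{v-1}{4}$, $t=s=\frac{v-1}{2}$ in the Paley cases and $\lambda=20$, $t=220$ in the sporadic case) is accurate.
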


\subsection{A product construction}
\label{prod.sec}

An \NF[(s,t)]{G}{\lambda}  is defined to be a \emph{half-set near-factorization} if 
$t = (|G|-1)/2$. Of course $|G|$ must be odd for a half-set NF in $G$ to exist. Equation (\ref{prod.eq}) states that $st = \lambda (|G|-1)$, so an NF is a half-set NF if and only if $s = 2 \lambda$.

We will describe a product construction for half-set NFs. Suppose we have two 
half-set NFs. Specifically, suppose  
$(S_i,T_i)$ is a half-set \NF[(s_i,t_i)]{G_i}{\lambda_i},
for $i = 1,2$.  Denote $|G_i| = n_i$, $i = 1,2$.
Define the sets $S_3$ and $T_3$ as follows:
\begin{align}
\label{Peq1} S_3 &= S_1 \times S_2, \quad \text{and}\\
\label{Peq2} T_3 &= ((G_1 \setminus T_1) \times T_2) \; \bigcup \; (T_1 \times (G_2 \setminus T_2)).
 \end{align}
We will prove that $(S_3,T_3)$ is a half-set \NF[(s_1s_2,(n_1n_2-1)/2)]{G_1\times G_2}{2\lambda_1\lambda_2}.

\begin{example}
{\rm If $S_1 = \{1,2\}$ and  $T_1 = \{0\}$, then  $(S_1,T_1)$ is a half-set \NF[(1,2)]{\zed_3}{1}.
Also, if $S_2 = \{2,3\}$ and  $T_2 = \{1,4\}$, then  $(S_2,T_2)$ is a half-set \NF[(2,2)]{\zed_5}{1}.
If we construct $S_3$ and $T_3$ using the formulas (\ref{Peq1}) and (\ref{Peq2}), then we obtain
\begin{align*}
 S_3 &= \{ (1,2), (1,3), (2,2), (2,3)\}, \quad \text{and}\\
 T_3 &= \{ (1,1), (1,4), (2,1), (2,4)\} \; \bigcup \; \{(0, 0), (0, 2), (0,3) \}.
 \end{align*}
It is straightforward to verify that  $(S_3,T_3)$ is a \NF[(4,7)]{\zed_3 \times \zed_5}{2}.}$\hfill\blacksquare$
\end{example}

In general, (using group ring notation for convenience), we have the following:
\begin{align}
\label{Peq3}S_1  T_1 &= \lambda_1(G_1 - 0)\\
\label{Peq4}S_2  T_2 &= \lambda_2(G_2 - 0)\\
\label{Peq5}S_1  G_1 &= s_1 G_1\\
\label{Peq6}S_2  G_2 &= s_2 G_2\\
\nonumber S_1  (G_1 - T_1) &= s_1 G_1 - \lambda_1(G_1 - 0) \quad\quad \text{from (\ref{Peq3}) and (\ref{Peq5})}\\
\label{Peq7}&= (s_1-\lambda_1)G_1 + \lambda_1 0 \\
\nonumber S_2  (G_2- T_2) &= s_2 G_2 - \lambda_2(G_2 - 0) \quad\quad \text{from (\ref{Peq4}) and (\ref{Peq6})}\\
\label{Peq8}&= (s_2-\lambda_2)G_2 + \lambda_2 0.
\end{align}

We also have
\begin{align}
S_1  (G_1 - T_1) \times S_2  T_2 &= ((s_1-\lambda_1)G_1 + \lambda_1 0) \times \lambda_2 (G_2 - 0) 
\quad\quad \text{from (\ref{Peq4}) and (\ref{Peq7})}\\
S_1  T_1 \times S_2  (G_2- T_2)  &= \lambda_1 (G_1 - 0) \times ((s_2-\lambda_2)G_2 + \lambda_2 0)
\quad\quad \text{from (\ref{Peq3}) and (\ref{Peq8}).}\end{align} 

We want to compute $S_3T_3$. To do so, we use the identity 
\begin{equation}
\label{Peq11} (A \times B) (C \times D) = (AC) \times (BD).
\end{equation}

Then, using the facts that $s_1 = 2 \lambda_1$ and $s_2 = 2 \lambda_2$, we have
\begin{align*}
S_3  T_3 &= \big(S_1  (G_1 - T_1) \times S_2  T_2\big) + 
\big(S_1  T_1 \times S_2  (G_2- T_2)\big) \quad\quad \text{from (\ref{Peq1}), (\ref{Peq2}) and (\ref{Peq11})}\\
&= \big(((s_1-\lambda_1)G_1 + \lambda_1 0) \times \lambda_2 (G_2 - 0)\big) \\
&+ 
\big(\lambda_1 (G_1 - 0) \times ((s_2-\lambda_2)G_2 + \lambda_2 0)\big) 
\quad\quad \text{from (\ref{Peq3}), (\ref{Peq4}), (\ref{Peq7}) and (\ref{Peq8})}\\
&= (s_1-\lambda_1)\lambda_2(G_1 \times G_2) - (s_1-\lambda_1)\lambda_2(G_1 \times 0) 
+ \lambda_1\lambda_2(0 \times G_2) - \lambda_1\lambda_2(0 \times 0)\\
&+ \lambda_1(s_2-\lambda_2) (G_1 \times G_2) - \lambda_1(s_2-\lambda_2)(0 \times G_2)  
+ \lambda_1\lambda_2(G_1 \times 0) - \lambda_1\lambda_2(0 \times 0)\\
&= \lambda_1\lambda_2(G_1 \times G_2) - \lambda_1\lambda_2(G_1 \times 0) 
+ \lambda_1\lambda_2(0 \times G_2) - \lambda_1\lambda_2(0 \times 0)\\
&+ \lambda_1\lambda_2 (G_1 \times G_2) - \lambda_1\lambda_2(0 \times G_2)  
+ \lambda_1\lambda_2(G_1 \times 0) - \lambda_1\lambda_2(0 \times 0)\\
&= 2\lambda_1\lambda_2\big((G_1 \times G_2) - (0,0)\big).
\end{align*}
Thus $(S_3,T_3)$ is  an \NF[(s_3,t_3)]{G_1 \times G_2}{\lambda_3},
where 
\begin{align*}
\lambda_3 &= 2 \lambda_1\lambda_2\\
s_3 &= s_1s_2, \quad\quad \text{and}\\
t_3 &= t_2(n_1-t_1) + t_1(n_2 - t_2)\\
&= \left( \frac{n_2-1}{2}\right)\left( n_1 - \frac{n_1-1}{2}\right)
+ \left( \frac{n_1-1}{2}\right)\left( n_2 - \frac{n_2-1}{2}\right)\\
&= \left( \frac{n_2-1}{2}\right)\left(\frac{n_1+1}{2}\right)
+ \left( \frac{n_1-1}{2}\right)\left(\frac{n_2+1}{2}\right)\\
&= \frac{n_1n_2-1}{2}.
\end{align*}
Therefore $(S_3,T_3)$ is also a half-set near-factorization.

\medskip

We have proven the following theorem.

\begin{theorem}
\label{prod-thm}
Suppose there exists a half-set \NF[(s_i,t_i)]{G_i}{\lambda_i}, for $i = 1,2$. Denote $|G_i| = n_i$, $i = 1,2$. 
Then there exists a half-set \NF[(s,t)]{G_1\times G_2}{2\lambda_1\lambda_2}, where $s = s_1s_2$, $\lambda = 2\lambda_1\lambda_2$ and 
$t = (n_1n_2-1)/2$.
\end{theorem}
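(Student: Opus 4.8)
The plan is to take $S_3$ and $T_3$ exactly as in (\ref{Peq1}) and (\ref{Peq2}) and to verify the three assertions that make up the theorem: that $|S_3| = s_1 s_2$, that $|T_3| = (n_1 n_2 - 1)/2$, and that $S_3 T_3 = 2\lambda_1\lambda_2\big((G_1 \times G_2) - (0,0)\big)$ in $\zed[G_1 \times G_2]$. The first is immediate. For the second, I would first note that the existence of a half-set NF in $G_i$ forces $n_i$ to be odd and $t_i = (n_i-1)/2$, so that $n_1 n_2$ is odd and a half-set NF in $G_1 \times G_2$ is meaningful; then I would observe that the two sets whose union defines $T_3$, namely $(G_1 \setminus T_1) \times T_2$ and $T_1 \times (G_2 \setminus T_2)$, are disjoint because their first-coordinate projections $G_1 \setminus T_1$ and $T_1$ partition $G_1$. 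Hence $|T_3| = (n_1 - t_1)t_2 + t_1(n_2 - t_2)$, and substituting $t_i = (n_i-1)/2$ and simplifying yields $(n_1 n_2 - 1)/2$.

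The heart of the argument is the product computation, for which the essential tool is the identity (\ref{Peq11}): for subsets $A, C \subseteq G_1$ and $B, D \subseteq G_2$, one has $(A \times B)(C \times D) = (AC) \times (BD)$ in $\zed[G_1 \times G_2]$. I would justify this by noting that the coefficient of $(g_1,g_2)$ on the left counts pairs $((a,b),(c,d))$ with $ac = g_1$, $bd = g_2$, which is the product of the coefficient of $g_1$ in $AC$ and of $g_2$ in $BD$. With this in hand, and using the expansions (\ref{Peq7}) and (\ref{Peq8}) of $S_i(G_i - T_i)$ derived from $S_iT_i = \lambda_i(G_i - 0)$ and $S_iG_i = s_iG_i$, I would expand
\[
S_3 T_3 = \big(S_1(G_1 - T_1)\big) \times \big(S_2 T_2\big) \;+\; \big(S_1 T_1\big) \times \big(S_2(G_2 - T_2)\big),
\]
substitute, and collect the eight resulting terms involving $(G_1\times G_2)$, $(G_1\times 0)$, $(0\times G_2)$ and $(0,0)$. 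The half-set hypotheses $s_1 = 2\lambda_1$ and $s_2 = 2\lambda_2$ make the $(G_1\times 0)$ and $(0\times G_2)$ contributions cancel between the two halves, leaving precisely $2\lambda_1\lambda_2\big((G_1 \times G_2) - (0,0)\big)$.

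Finally I would read off the parameters: $S_3 T_3 = \lambda_3\big((G_1 \times G_2) - (0,0)\big)$ with $\lambda_3 = 2\lambda_1\lambda_2$, $|S_3| = s_1 s_2$ and $|T_3| = (n_1 n_2 - 1)/2$, so $(S_3,T_3)$ is an \NF[(s_1s_2,(n_1n_2-1)/2)]{G_1\times G_2}{2\lambda_1\lambda_2}; since $t_3 = (|G_1\times G_2| - 1)/2$, it is a half-set near-factorization. I do not expect a real obstacle here: the only place that demands care is the cancellation of the ``boundary'' terms $(G_1\times 0)$ and $(0\times G_2)$, which is exactly where the half-set conditions $s_i = 2\lambda_i$ enter — without them, $S_3T_3$ would not be a scalar multiple of $(G_1 \times G_2) - (0,0)$, and the construction would fail.
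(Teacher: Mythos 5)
Your proposal is correct and follows essentially the same route as the paper: the same sets $S_3$ and $T_3$ from (\ref{Peq1}) and (\ref{Peq2}), the same use of the identity (\ref{Peq11}) together with the expansions (\ref{Peq7}) and (\ref{Peq8}), the same cancellation via $s_i = 2\lambda_i$, and the same count $t_3 = (n_1-t_1)t_2 + t_1(n_2-t_2) = (n_1n_2-1)/2$. Your explicit remarks on the disjointness of the two pieces of $T_3$ and on the justification of (\ref{Peq11}) are minor additions the paper leaves implicit.
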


An immediate consequence of Theorem \ref{prod-thm} is that we can take products of an arbitrary number half-set NFs and the result is again a half-set NF.
It is also true that the product of symmetric half-set NFs is symmetric.

\begin{corollary}
\label{prod-thm-sym}
Suppose there exists a symmetric half-set \NF[(s_i,t_i)]{G_i}{\lambda_i}, for $i = 1,2$. Denote $|G_i| = n_i$, $i = 1,2$. 
Then there exists a symmetric half-set \NF[(s,t)]{G_1\times G_2}{2\lambda_1\lambda_2}, where $s = s_1s_2$, $\lambda = 2\lambda_1\lambda_2$ and 
$t = (n_1n_2-1)/2$.
\end{corollary}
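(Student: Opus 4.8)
The plan is to show that the product construction of Theorem~\ref{prod-thm} preserves symmetry, so that almost nothing new needs to be proven: the near-factorization parameters and the group ring identity are already established, and only the symmetry of the two resulting factors $S_3$ and $T_3$ remains. First I would recall that a subset $A$ of a group is symmetric precisely when $A = A^{(-1)}$, and that for a direct product $G_1 \times G_2$ the inverse of an element $(g_1, g_2)$ is $(g_1^{-1}, g_2^{-1})$; hence $(A \times B)^{(-1)} = A^{(-1)} \times B^{(-1)}$, and a Cartesian product of symmetric sets is symmetric. I would also note the elementary fact that if $A \subseteq G$ is symmetric, then so is its complement $G \setminus A$, since $g \in G \setminus A \iff g \notin A \iff g^{-1} \notin A \iff g^{-1} \in G \setminus A$.

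With these observations in hand the argument is immediate. By hypothesis each $(S_i, T_i)$ is symmetric, so $S_i = S_i^{(-1)}$ and $T_i = T_i^{(-1)}$ for $i = 1,2$; consequently $G_i \setminus T_i$ is symmetric as well. Then $S_3 = S_1 \times S_2$ is symmetric because it is a product of symmetric sets. For $T_3$, apply the inverse operation to (\ref{Peq2}): since $(-1)$ distributes over union and over Cartesian products,
\[
T_3^{(-1)} = \big((G_1 \setminus T_1)^{(-1)} \times T_2^{(-1)}\big) \;\cup\; \big(T_1^{(-1)} \times (G_2 \setminus T_2)^{(-1)}\big) = \big((G_1 \setminus T_1) \times T_2\big) \;\cup\; \big(T_1 \times (G_2 \setminus T_2)\big) = T_3,
\]
so $T_3$ is symmetric. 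Therefore $(S_3, T_3)$ is a symmetric half-set near-factorization, and Theorem~\ref{prod-thm} already gives its parameters, namely $s = s_1 s_2$, $\lambda = 2\lambda_1\lambda_2$, and $t = (n_1 n_2 - 1)/2$. Alternatively, one could invoke Lemma~\ref{SimpliesT.lem}: once $S_3$ is shown symmetric, symmetry of $T_3$ follows automatically for any near-factorization, which shortens the argument further.

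There is essentially no obstacle here; the only point requiring a moment's care is that the two ``cross'' blocks in the definition of $T_3$ get swapped by the inverse map in general (the first block $(G_1\setminus T_1)\times T_2$ maps to $(G_1\setminus T_1)^{(-1)}\times T_2^{(-1)}$, which lands back in the first block only because each factor is individually symmetric), so one should write out the distribution of $(-1)$ over the union explicitly rather than claiming it blockwise without comment. Everything else is a direct appeal to Theorem~\ref{prod-thm} and the definition of symmetry.
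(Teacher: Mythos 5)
Your proposal is correct, and its skeleton matches the paper's: show $S_3 = S_1\times S_2$ is symmetric and invoke Theorem~\ref{prod-thm} for the parameters. The only divergence is how symmetry of $T_3$ is obtained. The paper simply applies Lemma~\ref{SimpliesT.lem} (a near-factorization with one symmetric factor has both factors symmetric), which you correctly identify as the shorter alternative; your primary argument instead verifies $T_3 = T_3^{(-1)}$ directly from (\ref{Peq2}), using that inversion distributes over unions and Cartesian products and that complements of symmetric sets are symmetric. Both routes are valid; the direct computation is self-contained and makes the structure of $T_3$ explicit, while the lemma-based route is what the paper uses and avoids any manipulation of (\ref{Peq2}). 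One cosmetic remark: your closing claim that the two blocks of $T_3$ ``get swapped'' by the inverse map in general is not quite right --- elementwise inversion sends $(G_1\setminus T_1)\times T_2$ to $(G_1\setminus T_1)^{(-1)}\times T_2^{(-1)}$, which in general is simply a different set, not the other block; but since you then use only the symmetric case, where each block is fixed, this does not affect the correctness of the proof.
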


\begin{proof} Suppose $(S_i,T_i)$ is a  symmetric half-set \NF[(s_i,t_i)]{G_i}{\lambda_i}, for $i = 1,2$. If $S_1$ and $S_2$ are symmetric, then it is clear that $S_3 = S_1 \times S_2$ is also symmetric. From Theorem \ref{prod-thm}, we have that $(S_3,T_3)$ is an \NF[(s,t)]{G_1\times G_2}{\lambda}, where $T_3$ is obtained from  (\ref{Peq2}), $s = s_1s_2$, $\lambda = 2\lambda_1\lambda_2$ and 
$t = (n_1n_2-1)/2$. Because $S_3$ is symmetric, it follows from Lemma \ref{SimpliesT.lem} that $T_3$ is also symmetric.
\end{proof}

\medskip

Now we describe some applications of the product construction.
It was proven by de Bruijn \cite{DB} that there is an \NF[(s,t)]{\zed_n}{1} whenever $n-1 = st$. 
When $n$ is odd and $s=2$, we get a half-set \NF[\left(2,\frac{n-1}{2}\right)]{\zed_n}{1}. (In fact, it is easy to write down a near-factorization with these parameters, namely, $S = \{0,1\}$ and $T = \{1,3,5, \dots, n-2\}$.) 
This is a near-factorization with $\lambda = 1$, so it is equivalent to a symmetric 
\NF[\left(2,\frac{n-1}{2}\right)]{\zed_n}{1} by Lemma \ref{symmetric.lem}. Applying Corollary \ref{prod-thm-sym}, we obtain the following result by induction on $k$.

\begin{theorem}
\label{general.thm}
Suppose $k \geq 1$ is an integer and suppose $n_j$ is a positive odd integer exceeding 1, for $1 \leq j \leq k$. Then there exists a symmetric half-set \NF[\left(2^k,\frac{(\prod_{j=1}^{k} n_j) -1}{2}\right)]{\zed_{n_1} \times \cdots \times \zed_{n_{k}}}{2^{k-1}}.
\end{theorem}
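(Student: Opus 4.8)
The plan is to prove the statement by induction on $k$, using the $\lambda = 1$ near-factorization of de Bruijn recalled above as the base case and the symmetric product construction of Corollary \ref{prod-thm-sym} for the inductive step.

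For the base case $k = 1$, de Bruijn's result gives an $\NF[(2,(n_1-1)/2)]{\zed_{n_1}}{1}$, for example $S = \{0,1\}$ and $T = \{1,3,\dots,n_1-2\}$, and this is a half-set NF since $n_1$ is odd. As this near-factorization has $\lambda = 1$, Theorem \ref{symmetric.lem} shows it is equivalent to a symmetric $\NF[(2,(n_1-1)/2)]{\zed_{n_1}}{1}$; since equivalence is realized by automorphisms and translations, it preserves $|S|$, $|T|$, and the ambient group, so the resulting symmetric NF is again a half-set NF of $\zed_{n_1}$. (Alternatively, a symmetric example can be written down directly: take $S = \{1,-1\}$ and $T = \{\, 1 + 2i \bmod n_1 : i \in \{1,3,5,\dots,n_1-2\}\,\}$; since $\gcd(2,n_1)=1$ this $T$ has exactly $(n_1-1)/2$ elements, one checks $S + T = \zed_{n_1}\setminus\{0\}$ with each nonzero element covered once, and both $S$ and $T$ are symmetric.) Either way, the case $k = 1$ holds.

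For the inductive step, let $k \ge 2$ and write $N = \prod_{j=1}^{k-1} n_j$. Assume there is a symmetric half-set $\NF[(2^{k-1},(N-1)/2)]{\zed_{n_1}\times\cdots\times\zed_{n_{k-1}}}{2^{k-2}}$. Apply Corollary \ref{prod-thm-sym} to this NF together with the symmetric half-set $\NF[(2,(n_k-1)/2)]{\zed_{n_k}}{1}$ from the base case. The corollary yields a symmetric half-set NF of $(\zed_{n_1}\times\cdots\times\zed_{n_{k-1}})\times\zed_{n_k} \cong \zed_{n_1}\times\cdots\times\zed_{n_k}$ whose parameters are $s = 2^{k-1}\cdot 2 = 2^k$, $\lambda = 2\cdot 2^{k-2}\cdot 1 = 2^{k-1}$, and $t = (N\cdot n_k - 1)/2 = \bigl(\prod_{j=1}^{k} n_j - 1\bigr)/2$, which are exactly the claimed parameters. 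This completes the induction.

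The proof is essentially bookkeeping, so I do not expect a genuine obstacle; the only points that need attention are (i) confirming that the base-case near-factorization is genuinely a \emph{symmetric} half-set NF, so that Corollary \ref{prod-thm-sym} applies (this is where Theorem \ref{symmetric.lem}, or the explicit construction above, is invoked), and (ii) checking that the recursions $s_k = 2s_{k-1}$, $\lambda_k = 2\lambda_{k-1}$, and $t_k = (N_{k-1}n_k - 1)/2$ telescope correctly to $2^k$, $2^{k-1}$, and $\bigl(\prod_{j=1}^k n_j - 1\bigr)/2$.
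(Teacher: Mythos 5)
Your proof is correct and follows essentially the same route as the paper: the base case is de Bruijn's $(2,\frac{n-1}{2})$-near-factorization of $\zed_n$ made symmetric via Theorem \ref{symmetric.lem}, and the inductive step is Corollary \ref{prod-thm-sym}, with the same parameter bookkeeping. The explicit symmetric base-case example you offer in parentheses is a harmless (and valid) extra, not needed for the argument.
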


Suppose $G$ is an abelian group.
Define $r(G)$ (the \emph{rank} of $G$) to be the minimum number of cyclic groups in any decomposition of $G$ into a 
direct product of cyclic groups. Define $m(G)$ to be the number of cyclic groups in the unique decomposition of $G$ into a direct product of cyclic groups of prime power order. Clearly we have $r(G) \leq m(G)$. As an example, if 
$G = \zed_9 \times \zed_{15} \times \zed_{25}$, then $r(G) = 2$ (via the decomposition $G = \zed_{15} \times \zed_{2 25}$) and  $m(G) = 4$ (via the decomposition $G = \zed_9 \times \zed_{3} \times \zed_{5} \times \zed_{25}$).

We have the following corollary of Theorem \ref{general.thm}.

\begin{corollary}
Let $G$ be an abelian group of odd order $n$ and suppose $r(G) \leq k \leq m(G)$. Then there exists a
symmetric half-set \NF[\left(2^k,\frac{n -1}{2}\right)]{G}{2^{k-1}}.
\end{corollary}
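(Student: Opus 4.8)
The plan is to derive this corollary directly from Theorem~\ref{general.thm} by choosing an appropriate decomposition of $G$ into cyclic factors of odd order. The key observation is that Theorem~\ref{general.thm} produces a symmetric half-set near-factorization of any direct product $\zed_{n_1} \times \cdots \times \zed_{n_k}$ of $k$ cyclic groups of odd order exceeding $1$, with the parameter $2^k$ for $s$, $\lambda = 2^{k-1}$, and $t = (\prod n_j - 1)/2$. So it suffices to show that an abelian group $G$ of odd order $n$ can be written as a direct product of exactly $k$ nontrivial cyclic groups whenever $r(G) \le k \le m(G)$.

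First I would recall the two extreme decompositions. By definition of $m(G)$, there is a decomposition of $G$ into $m(G)$ cyclic groups of prime-power order, each nontrivial (since $n$ is odd, all these prime powers are odd and $>1$). By definition of $r(G)$, there is a decomposition of $G$ into $r(G)$ nontrivial cyclic groups. The main step is an interpolation argument: starting from the prime-power decomposition with $m(G)$ factors, one can repeatedly merge two coprime cyclic factors $\zed_a \times \zed_b$ into a single cyclic factor $\zed_{ab}$ (valid since $\gcd(a,b)=1$), each merge reducing the number of factors by exactly one, until one reaches the rank decomposition with $r(G)$ factors. Hence every intermediate value $k$ with $r(G) \le k \le m(G)$ is realized by some decomposition of $G$ into exactly $k$ nontrivial cyclic groups, all of odd order exceeding $1$. (One should check that the merging process can indeed be carried out so as to hit every intermediate count; since each step changes the count by $1$, it suffices that we can get from the prime-power decomposition down to a rank decomposition by such merges, which is a standard fact about finite abelian group structure.)

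Once $G \cong \zed_{n_1} \times \cdots \times \zed_{n_k}$ with each $n_j$ odd and $>1$ and $\prod_{j=1}^k n_j = n$, I would apply Theorem~\ref{general.thm} to obtain a symmetric half-set $\NF[(2^k, (n-1)/2)]{\zed_{n_1} \times \cdots \times \zed_{n_k}}{2^{k-1}}$. Transporting this near-factorization across the group isomorphism $G \cong \zed_{n_1} \times \cdots \times \zed_{n_k}$ preserves the defining group ring equation, the cardinalities of the two factors, and symmetry (an isomorphism commutes with inversion), so we get a symmetric half-set $\NF[(2^k, (n-1)/2)]{G}{2^{k-1}}$, which is exactly the claim.

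The only real obstacle is the combinatorial bookkeeping in the interpolation step — verifying that one can obtain decompositions of $G$ into precisely $k$ nontrivial cyclic factors for every $k$ between $r(G)$ and $m(G)$. This is routine given the fundamental theorem of finite abelian groups (merge coprime factors to decrease the count, split a factor $\zed_{p^a q^b \cdots}$ along distinct prime powers to increase it), and since each elementary move changes the factor count by exactly one, no intermediate value can be skipped. Everything else is a direct transfer of Theorem~\ref{general.thm} through an isomorphism.
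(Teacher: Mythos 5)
Your proposal is correct and follows essentially the same route as the paper: the paper's proof simply asserts that for $r(G) \leq k \leq m(G)$ the group $G$ can be written as a direct product of $k$ cyclic groups of odd order exceeding $1$ and then applies Theorem~\ref{general.thm}. Your interpolation argument (merging coprime factors one at a time from the prime-power decomposition down to an invariant-factor decomposition) just spells out the decomposition fact the paper takes for granted, and the transfer through an isomorphism is likewise implicit in the paper.
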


\begin{proof}
For the stated values of $k$, it is possible to express $G$ as a direct product of $k$ cyclic groups, each having odd order exceeding 1. Apply Theorem \ref{general.thm}.
\end{proof}

\subsubsection{Applications of the product construction using difference sets and partial difference sets}

We now  show how difference sets and partial difference sets give rise to half-set near-factorizations and we discuss applications using the product construction.

A difference set with parameters $(4n-1,2n-1,n-1)$ is called a \emph{Paley-type} difference set.
It follows from  from Theorem \ref{differencesets.thm}  that a Paley-type $(4n-1,2n-1,n-1)$-difference set in an abelian group $G$ of order $4n-1$ gives rise to a \NF[(2n-1,2n)]{G}{n}. This is equivalent to a \NF[(2n,2n-1)]{G}{n}, which is a half-set near-factorization. Paley-type difference sets are known to exist in the following cases:
\begin{itemize}
\item When $q = 4n-1$ is a prime power, the quadratic residues in $\eff_q$ form a Paley-type $(q,(q-1)/2,(q-3)/4)$-difference set.
\item Suppose $q$ and $q+2$ are both prime powers. Then there is a Paley-type difference set in 
$\eff_q \times \eff_{q+2}$ (see \cite{StSp}).
\item The Singer difference sets in $\zed_{2^{d+1}-1}$ are Paley-type $(2^{d+1}-1, 2^{d}-1, 2^{d-1}-1)$-difference sets for all integers $d \geq 2$.
\end{itemize}

\begin{example} 
\label{ex21}
{\rm There are Paley-type $(3,1,0)$- and $(7,3,1)$-difference sets. These yield
a half-set \NF[(2,1)]{\zed_3}{1} and a half-set \NF[(4,3)]{\zed_7}{2}, respectively. 
The product of these two NFs is a half-set \NF[(8,10)]{\zed_3 \times \zed_7}{4}. 

Here are the details:
\begin{itemize}
\item  $\{0\}$ is a (trivial) $(3,1,0)$-difference set, which gives rise to a \NF[(2,1)]{\zed_3}{1}, namely
$S_1 = \{1,2\}$, $T_1 = \{0\}$ in $\zed_3$.
\item  $\{1,2,4\}$ is a  $(7,3,1)$-difference set, which gives rise to a \NF[(4,3)]{\zed_7}{2}, namely
$S_2 = \{0,3,5,6\}$, $T_2 = \{1,2,4\}$ in $\zed_7$.
\item The product of these two near factorizations is
\begin{align*}
S_3 &= \{(1,0), (1,3), (1,5), (1,6), (2,0), (2,3), (2,5), (2,6)\}\\
T_3 &= \{ (0,0), (0,3), (0,5), (0,6), (1,1), (1,2), (2,1), (2,2), (4,1), (4,2)\}.
\end{align*}
\end{itemize} 
This near-factorization is not symmetric because the \NF[(4,3)]{\zed_7}{2} is not symmetric.}
$\hfill\blacksquare$
\end{example}

\begin{example}
\label{ex33} 
{\rm There are Paley-type $(3,1,0)$- and $(11,5,2)$-difference sets. These yield
a half-set \NF[(2,1)]{\zed_3}{1} and a half-set \NF[(6,5)]{\zed_{11}}{3}, respectively. 
The product of these two NFs is a (nonsymmetric) half-set \NF[(12,16)]{\zed_3 \times \zed_{11}}{6}. }
$\hfill\blacksquare$
\end{example}

We can also obtain half-set NFs from certain partial difference sets. A \emph{Paley-type} partial difference set (PDS) is a PDS with parameters $(4n+1,2n,n-1,n)$ (these were discussed in Theorem \ref{PDScharacterization.prop}).  It follows from   Theorem \ref{PDStoNF.thm}  that a Paley-type $(4n+1,2n,n-1,n)$-PDS in an abelian group $G$ of order $4n+1$ gives rise to a half-set \NF[(2n,2n)]{G}{n}.


We have described three families of half-set NFs. We can of course  ``mix-and-match'' NFs of various types in applications of the product construction.

\begin{example} 
\label{ex35}
{\rm There is a Paley-type $(7,3,1)$-difference set  and a Paley-type $(5,2,1,1)$-PDS. These yield
a half-set \NF[(4,3)]{\zed_7}{2} and a non-symmetric half-set \NF[(2,2)]{\zed_{5}}{1}, respectively. 
The product of these two NFs is a non-symmetric half-set \NF[(8,17)]{\zed_5 \times \zed_{7}}{4}.}
$\hfill\blacksquare$
\end{example}

\begin{example}
\label{ex39} 
{\rm There is a Paley-type $(3,1,0)$-difference set  and a Paley-type $(13,6,2,3)$-PDS. These yield
a symmetric half-set \NF[(2,1)]{\zed_3}{1} and a symmetric half-set \NF[(6,6)]{\zed_{13}}{3}, respectively. 
The product of these two NFs is a symmetric half-set \NF[(12,19)]{\zed_3 \times \zed_{13}}{6}.}
$\hfill\blacksquare$
\end{example}

\section{Restrictions on the sizes of subsets in a near-factorization}
\label{subsetsize.sec}

Suppose $G$ and $H$ are groups. If there is a group homomorphism $\tau : G \rightarrow H$, then we can extend 
$\tau$ to a group ring homomorphism 
$\tau : \zed [G] \rightarrow \zed [H]$ in an obvious way:
\[ \tau \left( \sum_{g \in G} a_g \, g \right) = \sum_{g \in G} a_g \, \tau(g).\]

Let $K = \{g \in G : \tau(g)= e \}$ be the kernel of a surjective group homomorphism $\tau$. Let $k=|K|$ and $h=|H|$; observe that
$k = |G| /h$.
It follows easily that
\[ \tau(G) = kH.\]
Then, in the group ring $\zed [G]$, we have
\begin{equation}
\label{Sec2.eqn}
\tau\Bigl(\lambda (G - e) \Bigr)
= \lambda \Bigl(\tau (G - e) \Bigr)
= \lambda \Bigl(\tau (G) - e \Bigr)
= \lambda \Bigl(kH - e \Bigr)
.\end{equation}

The extension of the constant homomorphism
$\delta \colon G \rightarrow \{e\}$ to $\zed [G]$
is called the \emph{augmentation map}:
\[
\delta\Bigl(\sum_{g \in G}x_g \, g\Bigr) = \sum_{g \in G}x_g .
\]
Observe that, if $S \subseteq G$, then $\delta(S) = |S|$.

We now fill in a gap in the proof of \cite[Theorem 3]{CGHK}. 

\begin{theorem}
\label{t1}
If the (not necessarily abelian) group $G$ has the additive abelian group $(\zed_p)^m$ as a quotient group, then for any near-factorization $G -e = ST$, we have
\begin{align}
|S|^{p-1} \equiv |T|^{p-1} \equiv 1 \bmod {p^m} & \quad \text{if $p$ is an odd prime}\label{eq1}\\
|S| \equiv -|T| \equiv \pm 1 \bmod {2^m} &  \quad \text{if $p=2$.}\label{eq2}
\end{align}
\end{theorem}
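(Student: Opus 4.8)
The plan is to push the near-factorization equation $ST = \lambda(G-e)$ through the quotient homomorphism $\tau \colon G \to (\zed_p)^m$ and then analyze the resulting identity in $\zed[(\zed_p)^m]$ using characters. Let $k = |G|/p^m$, let $H = (\zed_p)^m$, and set $A = \tau(S)$, $B = \tau(T)$ in $\zed[H]$. Applying $\tau$ to $ST = \lambda(G-e)$ and using the computation in \eqref{Sec2.eqn}, we get the group ring equation
\[
AB = \lambda\bigl(kH - e\bigr)
\]
in $\zed[H]$. Here $A$ and $B$ have non-negative integer coefficients, $\delta(A) = |S|$, $\delta(B) = |T|$ (the augmentation map is multiplicative, and $\delta(kH - e) = kp^m - 1 = |G| - 1$, consistent with $\delta(A)\delta(B) = |S||T| = \lambda(|G|-1)$).

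Next I would apply the characters of the elementary abelian group $H = (\zed_p)^m$. For the trivial (principal) character $\chi_0$ we recover $\chi_0(A)\chi_0(B) = |S||T| = \lambda(|G|-1)$, which is just \eqref{prod.eq} and gives nothing new. For a nontrivial character $\chi$, we have $\chi(H) = 0$, so the equation becomes
\[
\chi(A)\,\chi(B) = -\lambda.
\]
Now $\chi(A) = \sum_{g} a_g \chi(g)$ is an element of $\zed[\zeta_p]$, the ring of integers in the $p$-th cyclotomic field $\que(\zeta_p)$, and likewise $\chi(B)$. Reducing the equation $AB = \lambda(kH - e)$ modulo the ideal generated by $H$ in $\zed[H]$ — equivalently, working with $\chi$ modulo the prime $(1-\zeta_p)$ above $p$ — I would observe that $\chi(g) \equiv 1 \pmod{1-\zeta_p}$ for every $g \in H$, so $\chi(A) \equiv |S| \pmod{1-\zeta_p}$ and $\chi(B) \equiv |T| \pmod{1 - \zeta_p}$. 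Thus $\chi(A)\chi(B) = -\lambda$ forces $|S||T| \equiv -\lambda \pmod{1-\zeta_p}$, hence mod $p$; combined with $|S||T| = \lambda(p^m k - 1) \equiv -\lambda \pmod p$ this is automatically consistent, so the mod-$p$ information alone is not enough and one must climb to the full modulus $p^m$.

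The real engine is a counting/averaging argument over all nontrivial characters, which is where I expect the main work to lie. The standard device is to fix $\chi$ of order $p$ and consider the norm or, more directly, to sum $\chi(A)\chi(A^{(-1)})$ or to use the fact that for the elementary abelian $p$-group the number of solutions localizes nicely: I would compute $\sum_{\chi \neq \chi_0} \chi(A)\overline{\chi(A)}$ via the inversion formula to get $|H| \cdot (\text{something}) - |S|^2$, but more to the point, I would exploit that each $\chi(A)$, being an algebraic integer in $\que(\zeta_p)$ whose product with $\chi(B)$ is the rational integer $-\lambda$, has norm dividing a power of $\lambda$, and track congruences. The cleanest route for the stated congruences is: reduce $AB = \lambda(kH-e)$ modulo the augmentation ideal raised to successive powers, i.e. use the filtration of $\zed[H]$ by powers of $I_H = \ker\delta$; since $H \cdot I_H = 0$ and $I_H^{?}$-adic analysis in $\zed[(\zed_p)^m]$ is governed by $p$, one obtains $|S||T| \equiv -\lambda$, and then more refined congruences $|S|^{p-1} \equiv 1$, $|T|^{p-1} \equiv 1 \pmod{p^m}$ by a Jacobi-sum / Gauss-sum valuation argument — this is exactly the classical multiplier-type computation, and the parity/$\pm1$ split when $p = 2$ comes from $\que(\zeta_2) = \que$ so everything is already rational and the only units are $\pm 1$, forcing $|S| \equiv \pm 1$ and hence $|T| = -|S| \cdot (\text{unit}) \equiv \mp 1 \pmod{2^m}$.

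The hard part, and the gap being filled from \cite{CGHK}, is making the jump from the mod-$p$ congruence to the mod-$p^m$ congruence rigorous: one needs that $\chi(A)$ is not merely congruent to $|S|$ mod $(1-\zeta_p)$ but that the equation $\chi(A)\chi(B) = -\lambda$ in $\zed[\zeta_p]$, together with $\chi(A) \equiv |S|$, $\chi(B) \equiv |T|$ mod $(1-\zeta_p)$ and the global constraint $|S||T| = \lambda(p^mk-1)$, pins down $|S|$ and $|T|$ modulo the full power $p^m$. I expect this to require looking at the $(1-\zeta_p)$-adic valuation of $\chi(A)$ and $\chi(B)$ and using that their product is the integer $-\lambda$ (so their valuations are complementary), combined with the fact that the sum of the valuations over a Galois orbit recovers $v_p(\lambda)$; the elementary-abelian structure makes $p^m$ appear because there are $(p^m-1)/(p-1)$ orbits of nontrivial characters and the contributions multiply up.
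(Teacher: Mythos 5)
Your reduction to the quotient is the same as the paper's: push $ST=G-e$ through $\tau$ to get $AB=kH-e$ in $\zed[H]$, $H=(\zed_p)^m$. But beyond that point the proposal does not actually prove the theorem. As you yourself note, the character computation $\chi(A)\chi(B)=-\lambda$ together with $\chi(A)\equiv|S| \pmod{1-\zeta_p}$ only yields information modulo $p$, and the step that would lift this to the modulus $p^m$ is left as an expectation (``I would exploit \dots'', ``I expect this to require \dots a Jacobi-sum / Gauss-sum valuation argument''); the remark that ``the contributions multiply up'' over the $(p^m-1)/(p-1)$ Galois orbits of characters is not an argument, and it is not clear it can be made into one, since all character values live in $\que(\zeta_p)$ where $p$ has ramification index only $p-1$. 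The paper's mechanism for making $p^m$ appear is quite different and completely concrete: identify $H$ with the additive group of $\eff_q$, $q=p^m$, act by multiplication by a primitive element $\alpha$ (an automorphism of order $q-1$ transitive on $H-e$), multiply the $q-1$ translated equations $A_iB_i=kH-e$ to get $A'B'=(kH-e)^{q-1}=rH\pm e$, conclude $A'=ae+bH$, $B'=ce+dH$ with $ac=\pm1$, and then apply the augmentation map to get $s^{q-1}=a+bq$, hence $s^{2(q-1)}\equiv 1 \bmod q$, which is combined with Euler's theorem through the gcd computation (\ref{eq11}). None of this is present in, or recoverable from, your sketch.

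The case $p=2$ is where the proposal goes furthest astray, and it is precisely the case the theorem exists to repair. For $\lambda=1$ and nontrivial $\chi$ one indeed gets $\chi(A)=\pm1$, but $\chi(A)\equiv|S|$ only modulo $2$, so ``the only units are $\pm1$, forcing $|S|\equiv\pm1 \bmod 2^m$'' is a non sequitur. Moreover, even after one establishes $s^{2}\equiv t^{2}\equiv 1$ and $st\equiv-1 \bmod 2^m$ (the content of (\ref{eq14})), for $m\ge 3$ these congruences still admit the spurious solutions $s\equiv 2^{m-1}\pm1$ listed in (\ref{eq15}) — this is exactly the gap in the proof of \cite[Theorem 3]{CGHK}. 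The paper closes it by showing $s^{2^m-1}\equiv a\equiv\pm1 \bmod 2^m$ (since $ac=-1$ when $q$ is even) and then computing $(2^{m-1}\pm1)^{2^m-1}\equiv 2^{m-1}\pm1 \bmod 2^m$ via the binomial theorem, which rules those solutions out. Your proposal contains no step playing this role, so even granting the unproved lift to modulus $2^m$, the conclusion (\ref{eq2}) would not follow.
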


\begin{proof}
We follow the proof of  \cite[Theorem 3]{CGHK} with only minor changes of notation.
For convenience, let us denote $s = |S|$, $t = |T|$, $q = p^m$ and $k = |G| / q$.
Let $\tau : G \rightarrow H = (\zed_p)^m$ be a surjective homomorphism, where $p$ is prime. We will work in the group ring $\zed [H]$. 

Let $A = \tau(S)$, $B = \tau(T)$, where $A,B \in \zed [H]$. It follows immediately from (\ref{Sec2.eqn}) (with $\lambda = 1$) that 
\begin{equation}
\label{eq3} AB = kH - e.
\end{equation}

We can identify $H$ with the additive group of the finite field $\eff_q$. Let $\alpha$ be a primitive element of $\eff_q$. The mapping
$\sigma : H \rightarrow H$ defined as $\sigma(x) = \alpha x$ is an (additive) homomorphism of $H$ having order $q-1$. For $0 \leq i \leq q-2$, define
$A_i = \sigma^i (A) = \alpha^i A$ and $B_i = \sigma^i (B) = \alpha^i B$.

\begin{equation}
\label{eq4} A_iB_i = kH - e
\end{equation}
for $i = 0, \dots , q-2$.
Define 
\[ A' = \prod_{i=0}^{q-2} A_i \quad \text{and} \quad B' = \prod_{i=0}^{q-2} B_i.\]

Multiplying the $q-1$ equations (\ref{eq4}) together, we have
\begin{equation}
\label{eq5}
 A'B' = (kH-e)^{q-1}.
\end{equation}
It is clear that $H^2 = q H$. Hence, if we expand $(kH-e)^{q-1}$ using the binomial theorem, it follows from (\ref{eq5}) that
\begin{equation}
 A'B' = 
 \begin{cases}
 rH + e & \text{if $q$ is odd}\\
 rH - e & \text{if $q$ is even,}
 \end{cases}
\label{eq6}
\end{equation}
where $r$ is an integer.

We can write
\[ A' = \sum_{x \in H} c_x x,\]
where the $c_x$'s are integer coefficients. We have $\sigma(A') = A'$ and 
 $\sigma$ is transitive on $H - e$. Hence, it follows that all the $c_x$'s are equal, for $x \neq e$. 
 A similar result holds for $B'$. Hence, we have 
\begin{equation}
\label{eq7}
A' = ae + bH \quad \text{and} \quad B' = ce + dH,\end{equation}
where $a,b,c,d$ are integers.
Substituting (\ref{eq7}) into (\ref{eq6}), we see that
$ac = \pm 1$, so $a^2 = c^2 = 1$. 

We also have
\begin{equation}
\label{eq8}
 s^{q-1} = \delta(A)^{q-1} = \delta(A') = a + bq.
 \end{equation}
Because $a^2 = 1$, we see from (\ref{eq8}) that
\begin{equation}
\label{eq9} s^{2(q-1)} \equiv 1 \bmod q.
\end{equation}
It is also true from Euler's theorem that 
\[
 s^{\phi(q)} \equiv 1 \bmod q.
 \]
 Because $\phi(q) = p^{m-1}(p-1)$, we obtain
 \begin{equation}
\label{eq10} s^{p^{m-1}(p-1)} \equiv 1 \bmod q.
\end{equation}
We have 
\begin{equation}
\label{eq11}
\gcd(2(p^m-1), p^{m-1}(p-1)) =
\begin{cases}
p-1 & \text{if $p$ is odd or $m = 1$}\\
2 & \text{if $p=2$ and $m \geq 2$.}\\
\end{cases}
\end{equation}
From (\ref{eq9}), (\ref{eq10}) and (\ref{eq11}), we see that
\begin{equation}
\label{eq12}
s^{p-1} \equiv 1 \bmod p^m
\end{equation}
if $p$ is an odd prime
and
\begin{equation}
\label{eq13}
s^{2} \equiv 1 \bmod 2^m
\end{equation}
if $p = 2$.
The same congruences hold for $t = |T|$.

If $p$ is an odd prime, then we have proven (\ref{eq1}). (So far, we have followed the proof of \cite[Theorem 3]{CGHK}.)

If $p=2$ and $m=1$, then $s \equiv t  \equiv 1 \bmod 2$ and (\ref{eq2}) holds.

If $p=2$ and $m \ge 2$, we note that
\[ st = |G| - e = kq-1 \equiv -1 \bmod q.\]
Consequently, we obtain the following system of congruences:
\begin{equation}
\label{eq14}
\begin{aligned}
s^{2} &\equiv 1 \bmod 2^m\\
t^{2} &\equiv 1 \bmod 2^m\\
st &\equiv -1 \bmod 2^m.
\end{aligned}
\end{equation}

If $p=2$ and $m=2$, then $s \equiv -t  \equiv \pm 1 \bmod 4$ and (\ref{eq2}) holds.

The case  $p=2$ and $m \ge 3$ remains to be proven. It is stated in the last line of the proof of \cite[Theorem 3]{CGHK} that (\ref{eq2}) follows as a consequence
of the three congruences in (\ref{eq14}). The first and second equations in (\ref{eq14}) each has  four solutions modulo $q$ (where $q = 2^m$), namely,  $s, t \in \{ 1,2^{m-1}-1, 2^{m-1}+1, 2^m-1 \}$. The solution to the system of three congruences given in  (\ref{eq14}) is 
\begin{equation}
\label{eq15}
(s,t) \in \{ (1,2^m-1), (2^m-1,1), (2^{m-1}-1,2^{m-1}+1),  (2^{m-1}+1,2^{m-1}-1) \},
\end{equation}
which includes the solution in (\ref{eq2}) as a proper subset.
For example, if we take $m = 4$ (so $q = 16$), then $s = 7$, $t = 9$ is a solution to (\ref{eq15}) in which (\ref{eq2}) is not satisfied. 

\medskip

Fortunately, we can prove that (\ref{eq2}) holds with a small amount of additional work. First, from (\ref{eq5}), (\ref{eq6}) and (\ref{eq7}), we have
$ac = -1$ because $q$ is even. Hence, $a = -c = \pm 1$.
From (\ref{eq8}), we have
\begin{equation}
\label{eq16}
s^{2^m-1} \equiv a \equiv \pm 1 \bmod 2^m.
\end{equation}
On the other hand, 
\[
(2^{m-1} \pm 1)^{2^m-1} = \sum_{i=0}^{2^m-1} \binom{2^m-1}{i} (2^{m-1})^{i} (\pm 1)^{2^m-1-i}.
\] 
All the terms in this binomial expansion are divisible by $2^m$ except for the terms corresponding to $i=0$ and $i=1$.
The term corresponding to $i = 0$ is $\pm 1$ and the term corresponding to $i = 1$ is 
\[
 (2^m-1)2^{m-1} \equiv  2^{m-1} \bmod 2^m.\]
 It follows that 
 \[ (2^{m-1} \pm 1)^{2^m-1} \equiv 2^{m-1} \pm 1 \bmod 2^m.\]
 Therefore, from (\ref{eq16}), we must have 
\begin{equation*}
(s,t) \in \{ (1,2^m-1), (2^m-1,1) \},
\end{equation*}
and hence (\ref{eq2}) is satisfied. This completes the proof of Theorem \ref{t1}.
\end{proof}

\subsection{Generalization to $\lambda > 1$}

When $\lambda > 1$, we can extend Theorem  \ref{t1} provided that 
$\ell \equiv \pm 1 \bmod q$ for every prime divisor $\ell$ of $\lambda$. As before, we assume that $q = p^m$ for some prime $p$ and 
the group $G$ has the additive abelian group $(\zed_p)^m$ as a quotient group. Suppose that $(S,T)$ is 
an $\NF[(s,t)]{G}{\lambda}$.
We summarize the main changes in the proof. 
First, equation (\ref{eq5}) becomes
\begin{equation}
\label{eq18}
 A'B' = \lambda^{q-1}(kH-e)^{q-1}
\end{equation}
and equation (\ref{eq6}) is now 
\begin{equation}
 A'B' = 
 \begin{cases}
 \lambda^{q-1}(rH + e) & \text{if $q$ is odd}\\
 \lambda^{q-1}(rH - e) & \text{if $q$ is even,}
 \end{cases}
\label{eq19}
\end{equation}
where $r$ is an integer.

Equation (\ref{eq7}) holds (for different values of $a,b,c$ and $d$ than before, of course). Then we see that 
$ac =  \lambda^{q-1}$ if $q$ is odd and $ac = - \lambda^{q-1}$ if $q$ is even.
Because $\ell \equiv \pm 1 \bmod q$ for every prime divisor $\ell$ of $\lambda$, 
it follows that every prime divisor of $a$ or $c$ is congruent to $\pm 1$ modulo $q$. 
Hence,
\begin{equation}
\label{eq21}
a^2 \equiv 1 \bmod q \quad \text{and} \quad c^2 \equiv 1 \bmod q.
\end{equation} 
Equation (\ref{eq8}) can be derived as in the case $\lambda = 1$. Then we can use (\ref{eq21}) to show that (\ref{eq9}) is still true.
 If $q$ is odd, then we obtain (\ref{eq1}) as in the case $\lambda = 1$.
Therefore we now consider the case $p=2$.

 Because $\ell \equiv \pm 1 \bmod q$ for every prime divisor $\ell$ of $\lambda$, it follows that  $\lambda$ must be  odd. Because $q \mid n$, we have that $n$ is even. Then, because $st=\lambda(n-1)$, we see that  $s$ and $t$ are odd. Therefore, 
 for $p=2$ and $m=1$, we have $s \equiv t \equiv \lambda \equiv 1 \bmod 2$ and hence (\ref{eq2}) holds.

Now assume $p=2$ and $m \ge 2$.  Because (\ref{eq13}) still holds, we have
\begin{equation}
\label{eq22}
 s^{2} \equiv t^{2}  \equiv 1 \bmod q.
\end{equation}

Also, from (\ref{eq8}), we have
\begin{equation}
\label{eq23}
 s^{2^{m}-1} \equiv a \bmod q \quad \text{and} \quad t^{2^{m}-1} \equiv c \bmod q.
\end{equation}
It follows that
\[as \equiv s^{2^{m}} \equiv (s^2)^{2^{m-1}} \equiv 1 \equiv s^2 \bmod q.\]
Similarly, $ct \equiv t^2 \bmod q$. Hence,
\begin{equation}
\label{eq24}
s \equiv a \bmod q \quad \text{and} \quad t \equiv c \bmod q.
\end{equation}

We are assuming that $\ell \equiv \pm 1 \bmod q$ for every prime divisor $\ell$ of $\lambda$.
We  have $ac = - \lambda ^{q-1}$, so it follows that $a \equiv \pm 1 \bmod q$ and $c \equiv \pm 1 \bmod q$. 

We also have $\lambda ^{q-1} \equiv \pm 1 \bmod q$. When $\lambda^{q-1} \equiv 1 \bmod q$, we have $ac \equiv -1 \bmod q$ and hence $a \equiv -c \equiv \pm 1 \bmod q$.
When $\lambda^{q-1} \equiv -1 \bmod q$, we have $ac \equiv 1 \bmod q$ and hence $a \equiv c \equiv \pm 1 \bmod q$.
Together with (\ref{eq24}), we have the following generalization of Theorem \ref{t1}.

\begin{theorem}
\label{t2}
Suppose that 
$\ell \equiv \pm 1 \bmod q$ for every prime divisor $\ell$ of $\lambda$, where $q = p^m$ for some prime $p$.
If the (not necessarily abelian) group $G$ has the additive abelian group $(\zed_p)^m$ as a quotient group, then for any $\lambda$-fold near-factorization $\lambda(G - e) = ST$, we have
\begin{align*}
|S|^{p-1} \equiv |T|^{p-1} \equiv 1 \bmod {p^m} & \quad \text{if $p$ is an odd prime}\\
|S| \equiv -|T| \equiv \pm 1 \bmod {2^m} &  \quad \text{if $p=2$ and $\lambda^{q-1} \equiv  1 \bmod q$}\\
|S| \equiv |T| \equiv \pm 1 \bmod {2^m} &  \quad \text{if $p=2$ and $\lambda^{q-1} \equiv - 1 \bmod q$.}
\end{align*}
\end{theorem}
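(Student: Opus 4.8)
The plan is to re-run the proof of Theorem~\ref{t1} almost verbatim, keeping track of the factor $\lambda$ in every group-ring identity, and to invoke the hypothesis ``$\ell\equiv\pm1\bmod q$ for every prime divisor $\ell$ of $\lambda$'' at precisely the two places where $\lambda=1$ was used. Fix a surjection $\tau\colon G\to H=(\zed_p)^m$, identify $H$ with the additive group of $\eff_q$ with $q=p^m$, and put $A=\tau(S)$, $B=\tau(T)$, so that $AB=\lambda(kH-e)$ by (\ref{Sec2.eqn}). With $\alpha$ a primitive element of $\eff_q$ and $\sigma\colon x\mapsto\alpha x$, the translates $A_i=\sigma^i(A)$ and $B_i=\sigma^i(B)$ again satisfy $A_iB_i=\lambda(kH-e)$; multiplying the $q-1$ of these and using $H^2=qH$ gives (\ref{eq18}) and then the structural form (\ref{eq19}). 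Since $\sigma$ fixes $A'=\prod_i A_i$ and $B'=\prod_i B_i$ and acts transitively on $H-e$, we get $A'=ae+bH$ and $B'=ce+dH$ with $ac=\lambda^{q-1}$ if $q$ is odd and $ac=-\lambda^{q-1}$ if $q$ is even.

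The first use of the hypothesis is to upgrade this to $a^2\equiv c^2\equiv1\bmod q$, as in (\ref{eq21}): from $|a|\,|c|=\lambda^{q-1}$, every prime divisor of $a$ (and of $c$) divides $\lambda$, hence is $\equiv\pm1\bmod q$, and a product of such primes is again $\equiv\pm1\bmod q$. Then the augmentation identity (\ref{eq8}), namely $s^{q-1}=\delta(A')=a+bq$, gives $s^{2(q-1)}\equiv1\bmod q$ as in (\ref{eq9}); combining with Euler's congruence (\ref{eq10}) and the gcd computation (\ref{eq11}) exactly as in Theorem~\ref{t1} yields $s^{p-1}\equiv1\bmod{p^m}$ for odd $p$ (and the same for $t$), which is the first case of the statement, and $s^2\equiv t^2\equiv1\bmod{2^m}$ when $p=2$.

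For $p=2$ the remaining work is to separate the two sub-cases. First note $\lambda$ is odd: if $2\mid\lambda$ then $2\equiv\pm1\bmod q$, which fails for every $m\ge1$. Hence $n=|G|$ is even and, from $st=\lambda(n-1)$ (see~(\ref{prod.eq})), $s$ and $t$ are odd; when $m=1$ this already gives $s\equiv-t\equiv\pm1\bmod2$. For $m\ge2$ the key manoeuvre is to convert $s^2\equiv1\bmod q$ together with $s^{q-1}\equiv a\bmod q$ (equation~(\ref{eq23})) into $s\equiv a\bmod q$: since $s$ is a unit modulo $q$, the identity $as\equiv s^q\equiv(s^2)^{q/2}\equiv1\equiv s^2\bmod q$ may be cancelled by one factor of $s$, and similarly $t\equiv c\bmod q$, which is~(\ref{eq24}). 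Finally, $ac=-\lambda^{q-1}$ with $a,c\equiv\pm1\bmod q$ (by the hypothesis again), and $\lambda^{q-1}\equiv\pm1\bmod q$ with the same sign as $\lambda\bmod q$ because $q-1$ is odd: if $\lambda^{q-1}\equiv1\bmod q$ then $ac\equiv-1$, forcing $a\equiv-c\bmod q$ and hence $s\equiv-t\equiv\pm1\bmod{2^m}$; if $\lambda^{q-1}\equiv-1\bmod q$ then $ac\equiv1$, forcing $a\equiv c\bmod q$ and hence $s\equiv t\equiv\pm1\bmod{2^m}$.

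The step I expect to be the main obstacle is exactly the $p=2$, $m\ge3$ sub-case, which is the configuration that needed a separate argument in Theorem~\ref{t1}: there $x^2\equiv1\bmod{2^m}$ has the four roots $1,\,2^{m-1}-1,\,2^{m-1}+1,\,2^m-1$, so knowing $s^2\equiv1\bmod{2^m}$ is not enough and one must pin $s$ down exactly. The hypothesis on $\lambda$ is what unblocks this, in two complementary ways at once: it forces $a$ and $c$ to lie in $\{\pm1\}$ modulo $q$ (not merely to square to $1$), and it forces $\lambda$---hence $s$ and $t$---to be odd, which is precisely what legitimizes the cancellation $s\equiv a\bmod q$. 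I would still handle $m=1$ and $m=2$ directly as in the $\lambda=1$ case, and double-check that $\lambda^{q-1}\bmod q$ depends only on $\lambda\bmod q$ (true since $q$ even makes $q-1$ odd), so that the three cases of the theorem are genuinely exhaustive.
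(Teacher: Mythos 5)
Your proposal is correct and follows essentially the same route as the paper's proof: the same group-ring setup with $A'B'=\pm\lambda^{q-1}$ yielding $a,c\equiv\pm1\bmod q$, the same derivation of $s^{p-1}\equiv1\bmod p^m$ for odd $p$, and for $p=2$, $m\ge2$ the same cancellation $as\equiv s^{2^m}\equiv1\equiv s^2\bmod q$ giving $s\equiv a$, $t\equiv c$, followed by the same sign analysis of $ac=-\lambda^{q-1}$. No gaps to report.
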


We now present a corollary to Theorem \ref{t2}, which is a generalization of \cite[Example 5]{CGHK}.



\begin{corollary}
\label{cong.ex}
Suppose  $p$ is an odd prime and $\ell \equiv \pm 1 \bmod p^2$ for every prime divisor $\ell$ of $\lambda$. Then no 
$\NF[\bigl( p-1,\lambda(p+1) \bigr) ]{(\zed_p)^2}{\lambda}$ or
$\NF[\bigl( p+1, \lambda(p-1) \bigr) ]{(\zed_p)^2}{\lambda}$ exists.
\end{corollary}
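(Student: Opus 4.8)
The plan is to derive a contradiction directly from Theorem~\ref{t2}, applied with $m=2$. The group $G=(\zed_p)^2$ has $(\zed_p)^2$ as a quotient (take the trivial subgroup as kernel), and the hypothesis that $\ell\equiv\pm 1\bmod p^2$ for every prime divisor $\ell$ of $\lambda$ is exactly the condition needed to invoke Theorem~\ref{t2} with $q=p^2$. Since $p$ is odd, we land in the first case of that theorem: any $\NF[(s,t)]{(\zed_p)^2}{\lambda}$ must satisfy $s^{p-1}\equiv t^{p-1}\equiv 1\bmod p^2$.

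Next I would compute $(p\pm 1)^{p-1}$ modulo $p^2$ by the binomial theorem. Writing $(p-1)^{p-1}=\sum_{i=0}^{p-1}\binom{p-1}{i}p^{i}(-1)^{p-1-i}$, every term with $i\ge 2$ is divisible by $p^2$, so $(p-1)^{p-1}\equiv(-1)^{p-1}+(p-1)\,p\,(-1)^{p-2}\bmod p^2$; using that $p-1$ is even and $p-2$ is odd, this equals $1-(p-1)p\equiv 1+p\bmod p^2$. Similarly $(p+1)^{p-1}\equiv 1+(p-1)p\equiv 1-p\bmod p^2$. Since $p\not\equiv 0\bmod p^2$, neither $1+p$ nor $1-p$ is congruent to $1$ modulo $p^2$.

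Finally I would combine these facts. For a hypothetical $\NF[\bigl( p-1,\lambda(p+1) \bigr)]{(\zed_p)^2}{\lambda}$ we have $|S|=p-1$, so $|S|^{p-1}\equiv 1+p\not\equiv 1\bmod p^2$, contradicting Theorem~\ref{t2}; for a hypothetical $\NF[\bigl( p+1,\lambda(p-1) \bigr)]{(\zed_p)^2}{\lambda}$ we have $|S|=p+1$, so $|S|^{p-1}\equiv 1-p\not\equiv 1\bmod p^2$, again a contradiction. Hence neither near-factorization can exist. (As a consistency check, both parameter sets do satisfy $st=\lambda(p^2-1)=\lambda(|G|-1)$, as required by (\ref{prod.eq}).) There is no real obstacle in this argument; the only point needing a little care is tracking the signs $(-1)^{p-1}=1$ and $(-1)^{p-2}=-1$ in the binomial expansion of $(p-1)^{p-1}$.
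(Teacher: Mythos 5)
Your proposal is correct and matches the paper's proof essentially verbatim: both apply Theorem~\ref{t2} with $q=p^2$ via the identity map on $(\zed_p)^2$ and then use the binomial theorem to show $(p-1)^{p-1}\equiv 1+p$ and $(p+1)^{p-1}\equiv 1-p \bmod p^2$, contradicting the required congruence $|S|^{p-1}\equiv 1 \bmod p^2$. The sign bookkeeping in your expansion is right, so there is nothing to fix.
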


\begin{proof}
Let $H =G = (\zed_p)^2$ and apply the identity mapping from $G$ to itself in Theorem \ref{t2}.
First, let $s=p-1$ and $t=\lambda(p+1)$. Then 
we have 
\[
s^{p-1}=(p-1)^{(p-1)} \equiv p+1 \not\equiv 1 \bmod {p^2}
\]
from the binomial theorem. 

Also, if instead we take $s=p+1$, $t=\lambda(p-1)$, then 
 we have 
\begin{center}
{$s^{p-1}=(p+1)^{(p-1)} \equiv -p+1 \not\equiv 1 \bmod {p^2},$}
\end{center}
by the binomial theorem. 
\end{proof}

\section{Computational results}\label{Comp.sec}

In this section, we present our algorithm for computing $\lambda$-mates and we use a computer implementation  to construct near-factorizations of abelian groups of orders at most $50$.
Our algorithm is a straightforward generalization of the techniques developed in \cite[\S 1.1]{KMS} in the case $\lambda = 1$. We recall the notation from Section \ref{properties.sec}.

Let $G=\{g_1,g_2,\ldots,g_n\}$ be a multiplicative group with identity $e=g_1$ 
 and suppose $B \subseteq G$.
The $(0,1)$-valued matrix $M(B)$ is defined as follows:
\begin{align*}
 (M(B))_{i,j} &=1 \text{ if and only if }  g_i^{-1}g_j  \in B.
 \end{align*}
The following theorem is immediate.

\begin{theorem}\label{FirstColumn.thm} 
Let $G=\{g_1,g_2,\ldots,g_n\}$ be a \textup{(}multiplicative\textup{)} group with identity $e=g_1$. 
Suppose $B \subseteq G$ and define
\[ x_i = 
\begin{cases}
1~\text{ if $g_i^{-1} \in B$}\\
0~\text{ otherwise.}
\end{cases}
\]
Then 
\[
 (M(B))_{i,j} = x_k,
\]
where $g_k=g_j^{-1}g_i$.
\end{theorem}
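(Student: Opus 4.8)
The statement is essentially a bookkeeping identity about the matrix $M(B)$, so the plan is to unwind the definition of $(M(B))_{i,j}$ and match it against the definition of the vector $(x_i)$. First I would recall that, by definition, $(M(B))_{i,j} = 1$ precisely when $g_i^{-1} g_j \in B$, and $0$ otherwise; thus $(M(B))_{i,j}$ is the indicator of the event ``$g_i^{-1} g_j \in B$''. Likewise, $x_k$ is by definition the indicator of the event ``$g_k^{-1} \in B$''. So the whole claim reduces to showing that, for the index $k$ determined by $g_k = g_j^{-1} g_i$, we have $g_i^{-1} g_j \in B$ if and only if $g_k^{-1} \in B$.

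The second step is the one-line group-theoretic computation: if $g_k = g_j^{-1} g_i$, then $g_k^{-1} = (g_j^{-1} g_i)^{-1} = g_i^{-1} g_j$. Hence the two events ``$g_i^{-1} g_j \in B$'' and ``$g_k^{-1} \in B$'' are literally the same event, so their indicators agree, giving $(M(B))_{i,j} = x_k$. I would also note that $k$ is well-defined: since $G$ is a group and $g_1, \dots, g_n$ enumerates it, the element $g_j^{-1} g_i$ equals $g_k$ for a unique index $k$, so the statement ``$(M(B))_{i,j} = x_k$ where $g_k = g_j^{-1} g_i$'' is unambiguous.

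There is really no main obstacle here — the result is immediate from the definitions, which is presumably why the paper states ``the following theorem is immediate.'' The only thing to be slightly careful about is the direction of the product: one must check that it is $g_k = g_j^{-1} g_i$ (and not $g_i^{-1} g_j$) that makes $g_k^{-1} = g_i^{-1} g_j$ come out correctly, and that this matches the convention $(M(B))_{i,j} = 1 \iff g_i^{-1}g_j \in B$ used earlier in the excerpt. Once the indices are aligned, the proof is a single substitution. The purpose of the theorem, as the surrounding text makes clear, is algorithmic: it says that every entry of $M(B)$ is read off from the single 0-1 vector $(x_1, \dots, x_n)$ (the ``first column'' data, indexed through the group multiplication table), so $M(B)$ need not be stored explicitly; this practical point is worth a closing sentence but is not part of the proof itself.
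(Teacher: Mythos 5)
Your proof is correct and follows essentially the same argument as the paper: observe that $g_k = g_j^{-1}g_i$ gives $g_k^{-1} = g_i^{-1}g_j$, so the defining condition for $(M(B))_{i,j}=1$ coincides with the condition $g_k^{-1}\in B$ defining $x_k=1$. The paper routes this through the first-column entry $(M(B))_{k,1}$ while you compare indicators directly, but this is only a cosmetic difference.
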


\begin{proof}
Because $g_k=g_j^{-1}g_i$, we have  $g_k^{-1}=g_i^{-1}g_j$.
Then,
\[ (M_B)_{i,j} = 1 \Leftrightarrow g_i^{-1}g_j \in B \Leftrightarrow g_k^{-1} \in B  \Leftrightarrow  (M_B)_{k,1} = 1.
\]
It then follows that $(M(B))_{i,j} = (M(B))_{k,1} = x_k$. \qedhere
\end{proof}

According to Theorem~\ref{FirstColumn.thm}, we can define a vector $X=[x_1,x_2,\ldots,x_n]^T$, 
which is the first column of $M(B)$. Therefore, $M(B)$ is completely determined by the entries in the first column of $M(B)$.

If $T$ is a $\lambda$-mate to the subset $S \subseteq G$,
then by~Theorem \ref{equiv.thm},  $M(S)M(T) = \lambda (J-I)$. 
Hence, because $\lambda (J-I)$ has nonzero  determinant, the equation
\begin{equation}
 M(S) X = [0,\underbrace{\lambda,\lambda \cdots, \lambda}_{n-1~\text{times}}]^T\label{first.eqn}
\end{equation}
has a unique rational solution $X$. If $X$ is $(0,1)$-valued, then, applying
Theorem~\ref{FirstColumn.thm}, a subset $T$ is obtained that is a $\lambda$-mate to $S$.
Furthermore, this subset $T$ is unique. Consequently, an algorithm to compute the $\lambda$-mate
for a given subset $S$ of finite group $G$ with $n=|G|$, if exists, is as follows:
\begin{enumerate}
\item Compute  $M(S)$.
\item Solve $M(S) X = [0,\underbrace{\lambda,\lambda \cdots, \lambda}_{n-1~\text{times}}]^T$.
\item If $X=(x_1,x_2,\ldots,x_n)$ is $(0,1)$-valued, then 
\[
T= \{g_i^{-1} : x_i\neq 0\}
\]
is the unique $\lambda$-mate $T$ to $S$; otherwise $S$ has no $\lambda$-mate.
\end{enumerate}

This algorithmic approach  can be improved by taking advantage of Theorem~\ref{T3-l.thm}.
First we note that equation~(\ref{first.eqn}) has a $(0,1)$-valued solution X if and only
if 
\begin{equation}
 M(S) Y = [0,\underbrace{1,1 \cdots, 1}_{n-1~\text{times}}]^T\label{second.eqn}
\end{equation}
has a $\left(0,\tfrac{1}{\lambda}\right)$-valued solution $Y$ in the rational field $\que$.
On the other hand, to significantly accelerate the computation, instead of  the rational field $\que$, we can solve equation~(\ref{second.eqn}) modulo $p$ for a large enough prime $p$.

Let $p$ be a prime with $p > s$. Then by Corollary~\ref{upperbound.cor} and Lemma~\ref{11.lem}, we have $p > \lambda$.
Equation~(\ref{second.eqn}) has  $\left(0,\tfrac{1}{\lambda}\right)$-valued solution $Y=[y_1,y_2,\ldots,y_n]^T$ in
$\que$ if and only if equation~(\ref{second.eqn}) has a
$\left(0,\lambda^{-1}\right)$-valued solution $\overline{Y}=[\bar{y}_1,\bar{y}_2,\ldots,\bar{y}_n]^T$ in $\zed_p$, where $\lambda^{-1}$ is the multiplicative inverse of $\lambda$ in $\zed_p$ and
$\bar{y_i} \in \zed_p$ satisfies $\bar{y_i} \equiv y_i \pmod {p}$ for each $1 \le i \le n$.

Conversely, if equation~(\ref{second.eqn}) has a  $(0,\lambda^{-1})$-valued solution $\overline{Y}=[\bar{y}_1,\bar{y}_2,\ldots,\bar{y}_n]^T$ in $\zed_p$, then 
\begin{equation}
M(S)\overline{Y} \equiv 
[0,\underbrace{1,1 \cdots, 1}_{n-1~\text{times}}]^T \pmod{p} \label{third.eqn}
\end{equation}
We claim that $Y=[y_1,y_2,\ldots,y_n]^T$ is a $(0,\frac{1}{\lambda})$-solution to equation~(\ref{second.eqn}) in $\que$, where $y_i=\frac{1}{\lambda}$ if and only if $\bar{y}_i=\lambda^{-1}$.  
The first row in equation~(\ref{third.eqn}) indicates
$$
\sum_{j=1}^n M(S)_{1,j} \bar{y}_j \equiv 0 \pmod{p}.
$$
Note that 
$$
0 \leq \sum_{j=1}^n M(S)_{1,j} y_j \leq \frac{s}{\lambda} < \frac{p}{\lambda} < p,
$$
which implies
$$
\sum_{j=1}^n M(S)_{1,j} y_j = 0~\text{in $\que$}.
$$
The remaining $n-1$ rows in equation~(\ref{third.eqn}) indicate that for each $2 \leq i \leq n$,
$$
\sum_{j=1}^n M(S)_{i,j} \bar{y}_j \equiv 1 \pmod{p}.
$$
Note that
$$
0 \leq \sum_{j=1}^n M(S)_{i,j} y_j \leq \frac{s}{\lambda} < \frac{p}{\lambda} < p,
$$
which implies
$$
\sum_{j=1}^n M(S)_{i,j} y_j = 1~\text{in $\que$}.
$$
Therefore, $Y=[y_1,y_2,\ldots,y_n]^T$ is a $(0,\frac{1}{\lambda})$-solution to equation~(\ref{second.eqn}) in $\que$.
Consequently, we have established the following lemma.
\begin{lemma}\label{mod p.lem}
Let $S$ be a subset of a finite group $G$ with $n=|G|$ and $s=|S|$. Let $p$ be a prime with $p>s$ Then $S$ has a $\lambda$-mate
$T$ in $G$ for some $\lambda$ if and only if there exists a $(0,\mu)$-solution $\overline{Y}$
$$
M(S)\overline{Y} \equiv 
[0,\underbrace{1,1 \cdots, 1}_{n-1~\text{times}}]^T \pmod{p}
$$ 
for some $\mu \in \zed_p$. Furthermore, $\lambda$ is the unique multiplicative inverse of $\mu$ in $\zed_p$.
\end{lemma}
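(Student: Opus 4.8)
The plan is to prove Lemma~\ref{mod p.lem} as essentially a bookkeeping exercise built on top of the equivalences already established in the running text preceding the lemma, plus Theorem~\ref{T3-l.thm}. The key point is that the lemma merely packages the ``mod $p$'' discussion into a clean iff statement, so the proof should just assemble the pieces in the right logical order rather than introduce anything new.

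First I would prove the forward direction: assume $S$ has a $\lambda$-mate $T$ in $G$ for some $\lambda$. By Theorem~\ref{equiv.thm} we have $M(S)M(T) = \lambda(J-I)$, so $M(S)$ is invertible over $\que$ and $X = $ (first column of $M(T)$) is the unique $(0,1)$-solution of~(\ref{first.eqn}). Scaling by $1/\lambda$, the vector $Y = X/\lambda$ is the unique $(0,\frac{1}{\lambda})$-solution of~(\ref{second.eqn}) over $\que$. Since $p > s \geq \lambda$ (here I would cite Corollary~\ref{upperbound.cor} and Lemma~\ref{11.lem} to justify $p > \lambda$, exactly as in the paragraph above the lemma, so that $\lambda$ is invertible mod $p$ and $\mu := \lambda^{-1} \in \zed_p$ is well defined), reducing $Y$ modulo $p$ gives a $(0,\mu)$-valued vector $\overline{Y}$ satisfying $M(S)\overline{Y} \equiv [0,1,\dots,1]^T \pmod p$, because the integer matrix equation $M(S)X = [0,\lambda,\dots,\lambda]^T$ reduces mod $p$ and then we multiply through by $\mu$. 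This gives the claimed $(0,\mu)$-solution.

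Next I would prove the converse, which is where the real content sits — and this is the step I expect to be the main (though still modest) obstacle. Suppose $\overline{Y} = [\bar y_1,\dots,\bar y_n]^T$ is a $(0,\mu)$-solution of $M(S)\overline{Y} \equiv [0,1,\dots,1]^T \pmod p$ for some $\mu \in \zed_p$; note $\mu \neq 0$ since otherwise the right-hand side would be $0$ and $n-1 \geq 1$ forces a contradiction (or more carefully, $M(S)$ has positive row sums so $\mu = 0$ gives the zero vector, not $[0,1,\dots,1]^T$ — I would make this precise). Let $\lambda$ be the multiplicative inverse of $\mu$ in $\zed_p$, and lift $\overline{Y}$ to the rational vector $Y$ with $y_i = \frac{1}{\lambda}$ wherever $\bar y_i = \mu$ and $y_i = 0$ otherwise. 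The crucial estimate, already spelled out in the text, is that each coordinate of $M(S)Y$ satisfies $0 \leq \sum_j M(S)_{i,j} y_j \leq s/\lambda < p/\lambda < p$, so these nonnegative rationals, being congruent mod $p$ to $0$ (first row) or to $1$ (remaining rows) and lying strictly between $0$ and $p$, must equal exactly $0$ or $1$ respectively. Hence $M(S)Y = [0,1,\dots,1]^T$ over $\que$, so $\lambda Y$ is a $(0,1)$-solution of~(\ref{first.eqn}). Reading off $T = \{g_i^{-1} : y_i \neq 0\}$ via Theorem~\ref{FirstColumn.thm} and invoking Theorem~\ref{equiv.thm} in reverse shows $(S,T)$ is a $\lambda$-fold near-factorization, so $T$ is a $\lambda$-mate of $S$.

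Finally, for the ``furthermore'' clause — that $\lambda$ is the \emph{unique} multiplicative inverse of $\mu$ in $\zed_p$ — this is immediate since $\zed_p$ is a field, but I would also remark why $\lambda$ as a fold-parameter is itself uniquely determined by $S$: Theorem~\ref{T3-l.thm} guarantees that if $S$ has a $\lambda$-mate then the value $\lambda$ (and the mate) is unique, so there is no ambiguity in speaking of ``the'' $\lambda$ attached to $S$, and this $\lambda$ must coincide with $\mu^{-1} \bmod p$ by the argument above. I would close by observing that the bound $p > s$ is exactly what makes the ``squeeze'' estimate work, which is why it appears in the hypothesis.
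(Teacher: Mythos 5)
Your proposal is correct and follows essentially the same route as the paper: the forward direction by reducing the rational $(0,\tfrac{1}{\lambda})$-solution of~(\ref{second.eqn}) modulo $p$ (using $p>s$ together with Corollary~\ref{upperbound.cor} and Lemma~\ref{11.lem} to get $p>\lambda$), and the converse by the same squeeze estimate $0 \le \sum_j M(S)_{i,j} y_j \le s/\lambda < p$ forcing the lifted rational vector to satisfy~(\ref{second.eqn}) exactly over $\que$, then reading off the mate via Theorem~\ref{FirstColumn.thm}. Your extra remarks (ruling out $\mu = 0$ and invoking Theorem~\ref{T3-l.thm} for uniqueness of $\lambda$) are harmless refinements of the paper's argument.
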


Given a subset $S$ of a finite group $G$ with $s=|S|$ and a prime $p$ with $p>s$, a refined algorithm to compute the $\lambda$-mate
for a given subset $S$ of finite group $G$ with $n=|G|$ (if it exists) is as follows:

\begin{enumerate}
\item Compute  $M(S)$.
\item Solve $M(S)\overline{Y} \equiv [0,\underbrace{1,1 \cdots, 1}_{n-1~\text{times}}]^T \pmod{p}$.
\item If $\overline{Y}=(\bar{y}_1,\bar{y}_2,\ldots,\bar{y}_n)$ is a $(0,\mu)$-valued solution for some $\mu \in \zed_{p}$, then
\[
T= \{g_i^{-1} : \bar{y}_i \not= 0\}
\]
is the unique $\lambda$-mate $T$ to $S$, where $\lambda \equiv \mu^{-1} \pmod{p}$; otherwise $S$ has no $\lambda$-mate.
\end{enumerate}


The remaining difficulty in the computation is that, given possible parameters $s$, $t$, $\lambda$ and $G$, there are $\binom{|G|}{s}$ subsets $S$ to be tested, which leads to a huge search space. To address this difficulty, the upper bounds on $\lambda$ that we have proven in this paper greatly reduce the  number of parameter sets that need to be processed. Further, the list of candidate subsets $S$ can be abbreviated to only those that are least in lexicographical order with respect to equivalence. If only symmetric solutions are desired, then the search running time is dramatically reduced, because only subsets $S$ where $S=S^{(-1)}$ need to be considered.

The abelian groups in  Tables~\ref{Mixed.tab}, ~\ref{Pure.tab} and~\ref{Computer.tab}
are written additively
and classified using canonical decomposition. Thus, in these
tables, an abelian group $G$ of order $n$ is  written as
\[
G \cong 
\zed_{m_k}
\times \zed_{m_{k-1}}
\times \zed_{m_{k-2}}
\cdots
\times \zed_{m_{1}},
\]
for some $m_1,m_2,\ldots,m_k$, where $m_{i}|m_{i+1}$, $i=1,2,\ldots,k-1$, and $n= m_1m_2\ldots m_k$.
Note that the rank $k$ and the components $\zed_{m_i}$ in this decomposition are uniquely determined.

\begin{center}
\renewcommand{\arraystretch}{1.2}
\begin{longtable}{@{}r|l|r|r|r|c|@{\hspace{2pt}}R{8.75cm}l@{}}
\caption{ 
When a $\lambda$-fold $(s,t)$-near-factorization  of an abelian group of order $n \leq 35$ exists with $\lambda \geq 2$, either
an example is presented or an authority is given.
Parameters do not appear in the table if a
near-factorization with those parameters does not exist. If a symmetric solution can be found, then it is indicated in column \emph{Sym.?}.
}\\ \hline
\label{Mixed.tab}
$n$ & group & $s$ & $t$ & $\lambda$  &Sym.? & Authority  \\ \hline
\endfirsthead
\caption[]{(continued)}\\ \hline
$n$ & group & $s$ & $t$ & $\lambda$  &Sym.? & Authority  \\ \hline
\endhead
\hline
\endfoot
 7&$\zed_{7}$& 3& 4& 2&no&Theorem~\ref{differencesets.thm}, $D=\{$0, 1, 3$\}$\\\hline
 9&$(\zed_{3})^{2}$& 4& 4& 2&yes&Theorem~\ref{general.thm}\\\hline
11&$\zed_{11}$& 5& 6& 3&no&Theorem~\ref{differencesets.thm}, $D=\{$0, 1, 2, 4, 7$\}$\\\hline
13&$\zed_{13}$& 4& 9& 3&no&Theorem~\ref{differencesets.thm}, $D=\{$0, 1, 3, 9$\}$\\\hline
13&$\zed_{13}$& 6& 6& 3&yes&Theorem~\ref{PDStoNF.thm}, $D=\{$1, 3, 4, 9, 10, 12$\}$\\\hline
15&$\zed_{15}$& 4& 7& 2&yes&Theorem~\ref{general.thm} ($\zed_{15} \cong \zed_{5} \times \zed_3$)
\\\hline
15&$\zed_{15}$& 7& 8& 4&no&Theorem~\ref{differencesets.thm}, $D=\{$0, 1, 2, 4, 5, 8, 10$\}$\\\hline
16&$(\zed_{2})^{4}$& 6&10& 4&yes&Theorem~\ref{differencesets.thm}, $D=\{$(0,0,0,0), (0,0,0,1), (0,0,1,0), (0,1,0,0), (1,0,0,0), (1,1,1,1)$\}$\\\hline16&$(\zed_{4})^{2}$& 6&10& 4&yes&Theorem~\ref{differencesets.thm}, $D=\{$(0,1), (0,3), (1,0), (1,1), (3,0), (3,3)$\}$\\\hline
16&$\zed_{4}{\times}(\zed_{2})^{2}$& 6&10& 4&yes&Theorem~\ref{differencesets.thm}, $D=\{$(0,0,0), (0,0,1), (0,1,0), (1,0,0), (2,1,1), (3,0,0)$\}$\\\hline
16&$\zed_{8}{\times}\zed_{2}$& 5& 9& 3&no&$S=\{$(0,0), (0,1), (1,0), (3,0), (4,0)$\}$, $T=\{$(1,0), (1,1), (2,0), (3,0), (3,1), (4,1), (5,1), (6,0), (7,1)$\}$\\\hline
16&$\zed_{8}{\times}\zed_{2}$& 6&10& 4&no&Theorem~\ref{differencesets.thm}, $D=\{$(0,0), (0,1), (1,0), (2,0), (5,0), (6,1)$\}$\\\hline
17&$\zed_{17}$& 8& 8& 4&yes&Theorem~\ref{PDStoNF.thm}, $D=\{$1, 2, 4, 8, 9, 13, 15, 16$\}$\\\hline
19&$\zed_{19}$& 9&10& 5&no&Theorem~\ref{differencesets.thm}, $D=\{$0, 1, 2, 3, 5, 7, 12, 13, 16$\}$\\\hline
21&$\zed_{21}$& 4&10& 2&yes&Theorem~\ref{general.thm} ($\zed_{21} \cong \zed_{7} \times \zed_3$)\\\hline
21&$\zed_{21}$& 5&16& 4&no&Theorem~\ref{differencesets.thm}, $D=\{$0, 1, 4, 14, 16$\}$\\\hline
21&$\zed_{21}$& 8&10& 4&no& Example \ref{ex21} 
\\\hline
23&$\zed_{23}$&11&12& 6&no&Theorem~\ref{differencesets.thm}, $D=\{$0, 1, 2, 3, 5, 7, 8, 11, 12, 15, 17$\}$\\\hline
25&$(\zed_{5})^{2}$& 4&12& 2&yes&Theorem~\ref{general.thm}\\\hline
25&$(\zed_{5})^{2}$&12&12& 6&yes&Theorem~\ref{PDStoNF.thm}, $D=\{$(0,1), (0,2), (0,3), (0,4), (1,0), (1,1), (2,0), (2,2), (3,0), (3,3), (4,0), (4,4)$\}$\\\hline
27&$(\zed_{3})^{3}$& 8&13& 4&yes& Theorem \ref{general.thm}
\\\hline
27&$(\zed_{3})^{3}$&13&14& 7&no&Theorem~\ref{differencesets.thm}, $D=\{$(0,0,0), (0,0,1), (0,0,2), (0,1,0), (0,1,1), (0,2,0), (1,0,0), (1,0,1), (1,1,0), (2,0,1), (2,1,2), (2,2,0), (2,2,2)$\}$\\\hline
27&$\zed_{9}{\times}\zed_{3}$& 4&13& 2&yes&Theorem~\ref{general.thm}\\\hline
28&$\zed_{14}{\times}\zed_{2}$& 9&12& 4&no&$S=\{$(0,0), (0,1), (1,0), (2,0), (3,1), (4,1), (7,1), (12,0), (13,0)$\}$, $T=\{$(1,1), (3,0), (3,1), (4,1), (5,0), (6,1), (8,1), (9,0), (9,1), (11,0), (12,1), (13,1)$\}$\\\hline
29&$\zed_{29}$&14&14& 7&yes&Theorem~\ref{PDStoNF.thm}, $D=\{$1, 4, 5, 6, 7, 9, 13, 16, 20, 22, 23, 24, 25, 28$\}$\\\hline
31&$\zed_{31}$& 6&20& 4&no&$S=\{$0, 1, 2, 4, 8, 16$\}$, $T=\{$1, 2, 3, 4, 6, 7, 8, 11, 12, 13, 14, 16, 17, 19, 21, 22, 24, 25, 26, 28$\}$\\\hline
31&$\zed_{31}$& 6&25& 5&no&Theorem~\ref{differencesets.thm}, $D=\{$0, 1, 3, 8, 12, 18$\}$\\\hline
31&$\zed_{31}$&15&16& 8&no&Theorem~\ref{differencesets.thm}, $D=\{$0, 1, 2, 3, 5, 6, 8, 9, 13, 16, 21, 22, 23, 25, 27$\}$\\\hline
33&$\zed_{33}$& 4&16& 2&yes&Theorem~\ref{general.thm} ($\zed_{33} \cong \zed_{11} \times \zed_3$)\\\hline
33&$\zed_{33}$&12&16& 6&no& Example \ref{ex33} 
\\\hline
35&$\zed_{35}$& 4&17& 2&yes&Theorem~\ref{general.thm} ($\zed_{35} \cong \zed_{7} \times \zed_5$)\\\hline
35&$\zed_{35}$& 8&17& 4&no& Example \ref{ex35}
\end{longtable}
\renewcommand{\arraystretch}{1}
\end{center}  
\begin{center}
\renewcommand{\arraystretch}{1.2}
\begin{longtable}{@{}r|l|r|r|r|@{\hspace{2pt}}R{10.25cm}l@{}}
\caption{ When a $\lambda$-fold \textbf{symmetric} $(s,t)$-near-factorization of an abelian group of order $n$    exists with $\lambda \geq 2$, where $36\leq n \leq 50$,
either
an example is presented or an authority is given.
Parameters  do not appear in the table if
a symmetric near-factorization with those parameters does not exist.
(It is possible that a non-symmetric near factorization may exist for the missing parameters.)
}\\ \hline
\label{Pure.tab}
$n$ & group & $s$ & $t$ & $\lambda$  & Authority  \\ \hline       
\endfirsthead
\caption[]{(continued)}\\ \hline
$n$ & group & $s$ & $t$ & $\lambda$  & Authority  \\ \hline
\endhead
\hline
\endfoot
36&$(\zed_{6})^{2}$&15&21& 9&Theorem~\ref{differencesets.thm}, $D=\{$(0,0), (0,1), (0,3), (0,5), (1,0), (1,1), (1,3), (2,2), (3,0), (3,1), (3,5), (4,4), (5,0), (5,3), (5,5)$\}$\\\hline
37&$\zed_{37}$&18&18& 9&Theorem~\ref{PDStoNF.thm}, $D=\{$1, 3, 4, 7, 9, 10, 11, 12, 16, 21, 25, 26, 27, 28, 30, 33, 34, 36$\}$\\\hline
39&$\zed_{39}$& 4&19& 2&Theorem~\ref{general.thm} ($\zed_{39} \cong \zed_{13} \times 3$)\\\hline
39&$\zed_{39}$&12&19& 6& Example \ref{ex39}
\\\hline
41&$\zed_{41}$&20&20&10&Theorem~\ref{PDStoNF.thm}, $D=\{$1, 2, 4, 5, 8, 9, 10, 16, 18, 20, 21, 23, 25, 31, 32, 33, 36, 37, 39, 40$\}$\\\hline
45&$\zed_{15}{\times}\zed_{3}$& 4&22& 2&Theorem~\ref{general.thm}\\\hline
45&$\zed_{15}{\times}\zed_{3}$& 8&22& 4& Theorem~\ref{general.thm} ($\zed_{15}{\times}\zed_{3} \cong \zed_{5} \times \zed_3 \times \zed_3$)
\\\hline
45&$\zed_{45}$& 4&22& 2&Theorem~\ref{general.thm} ($\zed_{45} \cong \zed_{9} \times \zed_5$)\\\hline
49&$(\zed_{7})^{2}$& 4&24& 2&Theorem~\ref{general.thm}\\\hline
49&$(\zed_{7})^{2}$&24&24&12&Theorem~\ref{PDStoNF.thm}, $D=\{$(0,1), (0,2), (0,3), (0,4), (0,5), (0,6), (1,0), (1,1), (1,2), (2,0), (2,2), (2,4), (3,0), (3,3), (3,6), (4,0), (4,1), (4,4), (5,0), (5,3), (5,5), (6,0), (6,5), (6,6)$\}$
\end{longtable}
\renewcommand{\arraystretch}{1}
\end{center}

It is interesting to note that all of the half-set near-factorizations in Tables \ref{Mixed.tab} and \ref{Pure.tab} can be constructed using the product constructions, with the exception of the \NF[(24,24)]{(\zed_{7})^{2}}{12}.

In the Appendix, we provide a list of all parameters for which near-factorizations in abelian groups do not exist, but for which nonexistence only follows from exhaustive computer search, for all abelian groups of order $n \leq 35$.

\section{Summary}

We have initiated the study of $\lambda$-fold near-factorizations with $\lambda>1$. It is striking that no nontrivial $1$-fold near-factorizations of noncyclic abelian groups are known, while we have found many examples for $\lambda > 1$ in this paper. Discovering additional constructions and necessary conditions for existence of $\lambda$-fold near-factorizations is an interesting topic for future research. 

The construction of $\lambda$-fold near-factorizations of nonabelian groups has not been addressed in this paper. For $\lambda=1$, such near-factorizations commonly exist in dihedral groups, suggesting that dihedral groups might be a good place to look for $\lambda$-fold near-factorizations with $\lambda > 1$. Moreover, even for abelian groups, the nonexistence of $\lambda$-fold near-factorizations with certain parameters warrants further investigation.

\section*{Acknowledgements}
The authors would like to thank Marco Buratti for helpful comments.

\appendix

\section*{Appendix}

We provide a table of parameters for which near-factorizations in abelian groups do not exist, but whose nonexistence only follows from exhaustive computer search, for all abelian groups of order $n \leq 35$. This list is generated by the following procedure:

\begin{enumerate}
\item Apply Theorems \ref{sum.thm}, \ref{upperbound.lem}, \ref{n1n2.thm} and \ref{t2}, and Lemmas 
\ref{11.lem} and  \ref{n-1prime.lem},  to obtain the  all quadruples $(G,s,t,\lambda)$ that satisfy the known necessary conditions.
\item  Remove quadruples with $|G|=s+t$ and $|G|=s+t+1$, as the existence or nonexistence of the associated difference sets and partial difference sets is known. This  leaves $144$  quadruples $(G,s,t,\lambda)$.
\item Remove the quadruples $ (G,s,t,\lambda)$ that were found by computer search and presented in Table 1 (i.e., those that are not derived from difference sets or partial difference sets). There are 13 such quadruples. The remaining $144-13=131$ quadruples $ (G,s,t,\lambda)$ are listed in Table 3.
\end{enumerate}


\renewcommand{\arraystretch}{1.2}
\begin{table}[H]
\caption{Parameters for which near-factorizations in abelian groups do not exist, but whose nonexistence only follows from exhaustive computer search, for all abelian groups of order $n \leq 35$.
}\label{Computer.tab}
\begin{center}
\begin{tabular}{l@{\qquad}l}
\begin{tabular}{@{}r|R{3cm}|r|r|r@{\hspace{2pt}}l@{}}\hline
$n$ & group  $G$ & $s$ & $t$ & $\lambda$    \\ \hline       
11 & $\zed_{11}$& 4& 5& 2\\
13 & $\zed_{13}$& 3& 8& 2 \\
13& $\zed_{13}$& 4& 6& 2\\
15& $\zed_{15}$& 6& 7& 3\\
16& $(\zed_2)^4$, $\zed_4 \times (\zed_2)^2$, $(\zed_4)^2$, $\zed_8 \times \zed_2$, $\zed_{16}$ & 3& 10& 2\\[3.5ex]
16&  $(\zed_2)^4$, $\zed_4 \times (\zed_2)^2$, $(\zed_4)^2$, $\zed_8 \times \zed_2$, $\zed_{16}$ & 5& 6& 2\\[3.5ex]
16&  $(\zed_2)^4$, $\zed_4 \times (\zed_2)^2$, $(\zed_4)^2$,  $\zed_{16}$  & 5& 9& 3\\[3.5ex]
17& $\zed_{17}$& 4& 8& 2\\
17& $\zed_{17}$& 6& 8& 3\\
19& $\zed_{19}$& 3& 12& 2 \\
19& $\zed_{19}$& 4& 9& 2\\
19& $\zed_{19}$& 6& 6& 2\\
19& $\zed_{19}$& 6& 9& 3\\
19& $\zed_{19}$& 8& 9& 4 \\
21 & $\zed_{21}$& 5& 8& 2\\
21 & $\zed_{21}$& 4& 15& 3\\
21 & $\zed_{21}$& 5& 12& 3\\
21 & $\zed_{21}$& 6& 10& 3\\
22 & $\zed_{22}$& 3& 14& 2\\
22 & $\zed_{22}$& 6& 7& 2\\
22 & $\zed_{22}$& 7& 9& 3\\
22 & $\zed_{22}$& 6& 14& 4\\
22 & $\zed_{22}$& 7& 12& 4\\
23 & $\zed_{23}$& 4& 11& 2\\
23 & $\zed_{23}$& 6& 11& 3\\
23 & $\zed_{23}$& 8& 11& 4\\
23 & $\zed_{23}$& 10& 11& 5\\
25 & $(\zed_5)^2$, $\zed_{25}$& 3& 16& 2\\
25 & $\zed_{25}$                   & 4& 12& 2\\
25 & $(\zed_5)^2$, $\zed_{25}$& 6& 8& 2\\
25 & $(\zed_5)^2$, $\zed_{25}$& 4& 18& 3\\
25 & $(\zed_5)^2$, $\zed_{25}$& 6& 12& 3\\
25 & $(\zed_5)^2$, $\zed_{25}$& 8& 9& 3\\
25 & $(\zed_5)^2$, $\zed_{25}$& 6& 16& 4\\
\hline \end{tabular}
&
\begin{tabular}{@{}r|R{3cm}|r|r|r@{\hspace{2pt}}l@{}}\hline
$n$ & group  $G$ & $s$ & $t$ & $\lambda$    \\ \hline
25 & $(\zed_5)^2$, $\zed_{25}$& 8& 12& 4\\
25 & $(\zed_5)^2$, $\zed_{25}$& 8& 15& 5\\
25 & $(\zed_5)^2$, $\zed_{25}$& 10& 12& 5\\
26 & $\zed_{26}$& 5& 10& 2\\
26 & $\zed_{26}$& 5& 15& 3\\
26 & $\zed_{26}$& 10& 10& 4\\
27 & $(\zed_3)^3$, $\zed_{27}$                                  & 4& 13& 2\\
27 & $(\zed_3)^3$, $\zed_9 \times\zed_3$, $\zed_{27}$& 6& 13& 3\\
27 & $ \zed_9 \times\zed_3$, $\zed_{27}$& 8& 13& 4\\
27 & $(\zed_3)^3$, $\zed_9 \times\zed_3$, $\zed_{27}$& 10& 13& 5\\
27 & $(\zed_3)^3$, $\zed_9 \times\zed_3$, $\zed_{27}$& 12& 13& 6\\
28 & $\zed_{14} \times\zed_2$, $\zed_{28}$& 3& 18& 2\\
28 & $\zed_{14} \times\zed_2$, $\zed_{28}$& 6& 9& 2\\
28 & $\zed_{14} \times\zed_2$, $\zed_{28}$& 9& 9& 3\\
28 & $\zed_{14} \times\zed_2$, $\zed_{28}$& 6& 18& 4\\
28 & $\zed_{28}$                                       & 9& 12& 4\\
28 & $\zed_{14} \times\zed_2$, $\zed_{28}$& 9& 15& 5\\
29 & $\zed_{29}$& 4& 14& 2\\
29 & $\zed_{29}$& 7& 8& 2\\
29 & $\zed_{29}$& 4& 21& 3\\
29 & $\zed_{29}$& 6& 14& 3\\
29 & $\zed_{29}$& 7& 12& 3\\
29 & $\zed_{29}$& 7& 16& 4\\
29 & $\zed_{29}$& 8& 14& 4\\
29 & $\zed_{29}$& 7& 20& 5\\
29 & $\zed_{29}$& 10& 14& 5\\
29 & $\zed_{29}$& 12& 14& 6\\
31 & $\zed_{31}$& 3& 20& 2\\
31 & $\zed_{31}$& 4& 15& 2\\
31 & $\zed_{31}$& 5& 12& 2\\
31 & $\zed_{31}$& 6& 10& 2\\
31 & $\zed_{31}$& 5& 18& 3\\
31 & $\zed_{31}$& 6& 15& 3\\
31 & $\zed_{31}$& 9& 10& 3\\
31 & $\zed_{31}$& 5& 24& 4\\
31 & $\zed_{31}$& 8& 15& 4\\
31 & $\zed_{31}$& 10& 12& 4\\
\hline\end{tabular}
\end{tabular}
\end{center}
\end{table}

\begin{table}[H]
\caption*{Table \ref{Computer.tab}: (continued) }
\begin{center}
\begin{tabular}{l@{\qquad}l}
\begin{tabular}{@{}r|R{3cm}|r|r|r@{\hspace{2pt}}l@{}}\hline
$n$ & group  $G$ & $s$ & $t$ & $\lambda$    \\ \hline
31 & $\zed_{31}$& 10& 15& 5\\
31 & $\zed_{31}$& 9& 20& 6\\
31 & $\zed_{31}$& 10& 18& 6\\
31 & $\zed_{31}$& 12& 15& 6\\ 
31 & $\zed_{31}$& 14& 15& 7\\ 
33 & $\zed_{33}$& 8& 8& 2\\ 
33 & $\zed_{33}$& 4& 24& 3\\ 
33 & $\zed_{33}$& 6& 16& 3\\ 
33 & $\zed_{33}$& 8& 12& 3\\ 
33 & $\zed_{33}$& 8& 16& 4\\ 
33 & $\zed_{33}$& 8& 20& 5\\ 
33 & $\zed_{33}$& 10& 16& 5\\ 
33 & $\zed_{33}$& 14& 16& 7\\ 
34 & $\zed_{34}$& 3& 22& 2\\
\hline\end{tabular}
&
\begin{tabular}{@{}r|R{3cm}|r|r|r@{\hspace{2pt}}l@{}}\hline
$n$ & group  $G$ & $s$ & $t$ & $\lambda$    \\ \hline
34 & $\zed_{34}$& 6& 11& 2\\ 
34 & $\zed_{34}$& 9& 11& 3\\ 
34 & $\zed_{34}$& 6& 22& 4\\ 
34 & $\zed_{34}$& 11& 12& 4\\ 
34 & $\zed_{34}$& 11& 15& 5\\ 
34 & $\zed_{34}$& 9& 22& 6\\ 
34 & $\zed_{34}$& 11& 18& 6\\ 
34 & $\zed_{34}$& 11& 21& 7 \\
35 & $\zed_{35}$& 6& 17& 3\\ 
35 & $\zed_{35}$& 10& 17& 5\\ 
35 & $\zed_{35}$& 12& 17& 6\\ 
35 & $\zed_{35}$& 14& 17& 7\\ 
35 & $\zed_{35}$& 16& 17& 8\\ 
\hline
\multicolumn{5}{c}{}
\end{tabular}
\end{tabular}
\end{center}
\end{table}

\renewcommand{\arraystretch}{1}


\begin{thebibliography}{10}

\bibitem{AJMP}
K.T. Arasu, D.~Jungnickel, S.L. Ma and A.~Pott.
Strongly regular {C}ayley graphs with {$\lambda-\mu=-1$}.
{\em J. Combin. Theory Ser. A}, {\bf 67} (1994), 116--125.

\bibitem{BHS}
G.~Bacs\'{o}, L. H\'{e}thelyi and P.~Sziklai.  New near-factorizations of
  finite groups,  \textsl{Studia Sci. Math. Hungar.} \textbf{45} (2008),
  493--510.
 
\bibitem{BLS}
E.R. Berlekamp, J.H. van Lint, and J.J. Seidel.
A strongly regular graph derived from the perfect ternary {G}olay code.
In {\em A Survey of Combinatorial Theory ({P}roc. {I}nternat. {S}ympos., {C}olorado {S}tate {U}niv., {F}ort {C}ollins, {C}olo., 1971)}, 
North-Holland, Amsterdam, 1973, pp.\ 25--30.
 
\bibitem{BJL}
T.~Beth, D.~Jungnickel, and H.~Lenz.
{\em Design Theory. {V}ol. {I}}, volume~69 of {\em Encyclopedia of
  Mathematics and its Applications}.
Cambridge University Press, Cambridge, second edition, 1999.
 
\bibitem{DJ}
J.A.~Davis and J.~Jedwab.
A survey of {H}adamard difference sets.
\emph{Groups, Difference Sets, and the {M}onster ({C}olumbus, {OH}, 1993).} {\bf 4} (1996), 145--156.
  
\bibitem{DB}  N.G. De Bruijn.
On number systems.
\emph{Nieuw Arch. Wisk.} {\bf 3} (1956), 15--17.
  
\bibitem{CGHK} D. de Caen, D.A. Gregory, I.G. Hughes and D.L. Kreher.
Near-factors of finite groups.
\emph{Ars Combin.} {\bf 29} (1990), 53--63.


\bibitem{Gordon}
D.M. Gordon.
La Jolla Difference Sets Repository (1.1). (2024)
Zenodo. \url{https://doi.org/10.5281/zenodo.13886556}  





\bibitem{JL}
J. Jedwab and S. Li.
Group rings and character sums: tricks of the trade.
In C.~J. Colbourn and J.~H. Dinitz, editors, ``New Advances in Designs, Codes and Cryptography,''
\emph{Fields Inst. Commun.} {\bf 86} (2024), 241--266.
 
 
\bibitem{JPS}
D.~Jungnickel, A.~Pott, and K.W.~Smith.
Difference sets.
In C.~J. Colbourn and J.~H. Dinitz, editors, ``Handbook of Combinatorial Designs,"
Discrete Mathematics and its Applications (Boca Raton), 
Chapman \& Hall/CRC, Boca Raton, FL, second edition, 2007, pp. \ 419--435.

\bibitem{KPS}
D.L.\ Kreher, M.B.\ Paterson and D.R.\ Stinson.
Strong external difference families and classification of $\alpha$-valuations.
\emph{J. Combin. Designs}, to appear.

\bibitem{KPS2}
D.L.\ Kreher, M.B.\ Paterson and D.R.\ Stinson.
Near-factorizations of dihedral groups.
Preprint.
\url{https://arxiv.org/abs/2411.15884}


\bibitem{KMS}
D.L.\ Kreher, W.J.\ Martin and D.R.\ Stinson.
Uniqueness and explicit computation of mates in near-factorizations.
Preprint.
\url{https://arxiv.org/abs/2411.15890}

\bibitem{LM}
K.H. Leung and S.L. Ma.
Partial difference sets with {P}aley parameters.
\emph{Bull. London Math. Soc.} {\bf 27} (1995), 553--564.
 

\bibitem{Ma}
S.L.\ Ma. A survey of partial difference sets. \emph{Des. Codes Crypt.} {\bf 4} (1994),  221--261.

\bibitem{Nathanson}
M.B. Nathanson.
Additive systems and a theorem of de Bruijn. 
\emph{Amer. Math. Monthly}, \textbf{121}  (2014), 5--17.


\bibitem{Paley}
R.E.A.C. Paley.
On orthogonal matrices.
 \emph{J. Math. Phys.}, {\bf 12} (1933), 311--320.

\bibitem{Pech03}
A. P\^{e}cher. Partitionable graphs arising from
near-factorizations of finite groups.
\emph{Discr. Math.} {\bf 269} (2003), 191--218.

\bibitem{Pech}
A. P\^{e}cher. Cayley partitionable graphs and near-factorizations of finite groups.
\emph{Discr. Math.} {\bf 276} (2004), 295--311.

\bibitem{Polhill}
J. Polhill. 
Paley partial difference sets in groups of order {$n^4$} and {$9n^4$} for any odd {$n>1$}.
\emph{J. Combin. Theory Ser. A} {\bf 117} (2010), 1027--1036.

\bibitem{SS}
T. Sakuma and H. Shinohara.
Krasner near-factorizations
and 1-overlapped factorizations.
In ``The Seventh European Conference on Combinatorics, Graph Theory and Applications'', {\it EuroComb 2013}, Pisa, 2013, pp.\ 391--395.

\bibitem{StSp}
R.G. Stanton and D.A. Sprott. A family of difference sets.
\emph{Canad. J. Math.} {\bf 11} (1958), 73--77.
  
\bibitem{Wang}
Z. Wang. Paley type partial difference sets in abelian groups.
\emph{J. Combin. Des.} {\bf 28} (2020), 149--152.

\bibitem{YMSJ}
K. Yamada, M. Mishima, J. Satoh and M. Jimbo.
Multifold factorizations of cyclic groups into subsets.
\emph{Finite Fields Appl.}
{\bf 56} 
(2019), 131--149.

\end{thebibliography}
\end{document}